\theoremstyle:=definition,remark,plain\do{%
 \expandafter\g@addto@macro\csname th@\theoremstyle\endcsname{%
 \addtolength\thm@preskip\parskip 
 }%
 }
\newtheorem{theorem}{Theorem}
\numberwithin{theorem}{section}
\newtheorem{lemma}[theorem]{Lemma}
\newtheorem{proposition}[theorem]{Proposition}
\newtheorem{corollary}[theorem]{Corollary}
\newtheorem{remark}[theorem]{Remark}
\let\c@equation\c@theorem
\numberwithin{equation}{section}
\begin{document}

\title{Non-squeezing and capacities for some calibrated geometries}

\author{Kain Dineen \\ {\it Department of Mathematics, University of Toronto} \\ \tt{kain.dineen@mail.utoronto.ca} \and Spiro Karigiannis \\ {\it Department of Pure Mathematics, University of Waterloo} \\ \tt{karigiannis@uwaterloo.ca} }

\date{June 1, 2026}

\maketitle

\begin{abstract}
It was shown by Barron--Shafiee that an analogue of Gromov's non-squeezing theorem holds for affine maps which preserve a power \(\omega^k\) of the symplectic form \(\omega\) on \(\mathbb{R}^{2n}\). In the present paper, we state and prove in two ways an improved version of their result which is closer to the classical affine non-squeezing theorem. One proof closely follows their argument, and the other consists of a reduction to the classical case by showing that, except for the case \(k = n\), every linear map that preserves \(\omega^k\) must be symplectic or anti-symplectic.

We then study when a calibration form satisfies an (affine) non-squeezing theorem. Particular focus is given to the special Lagrangian case, where we are able to establish an affine non-squeezing theorem for the holomorphic volume form \(\Omega = dz^1 \wedge \cdots \wedge dz^n\). The classical symplectic affine rigidity theorem states roughly that a non-singular linear map is symplectic or anti-symplectic if and only if it preserves the ``capacity'' of every ellipsoid. We establish an affine special Lagrangian version of this theorem under the necessary assumption that the map is complex-linear. We also discuss some natural future questions.
\end{abstract}

\tableofcontents

\section{Introduction}

One of the main questions of symplectic geometry is whether one symplectic manifold can be embedded into another (or whether one subset of a symplectic space can be embedded into another) by a symplectic map. One way to study this problem is to develop invariants of symplectic manifolds which obstruct embeddings. Gromov proved the seminal \emph{non-squeezing theorem} in~\cite{gromov}, which states that a ball of radius \(r\) can be symplectically embedded into a ``symplectic'' cylinder of radius \(R\) only if \(r \leq R\). 

Motivated by the embedding problem, Ekeland and Hofer in~\cite{ekelandhofer} developed such a symplectic invariant (of subsets of \(\mathbb{R}^{2n}\)) which they called a \emph{symplectic capacity}. They established the existence of a symplectic capacity and used it to give another proof of Gromov's non-squeezing theorem. Their capacities were then used to prove that symplectic (or anti-symplectic) maps can be characterized as those maps preserving the capacities of every ellipsoid. Then they used this to answer the rigidity problem for symplectic maps: what is the uniform limit of a sequence of symplectic maps? Section~\ref{sec:classical} of the present paper is devoted to a discussion of the classical non-squeezing theorem, where we focus on the affine case and on the applications in~\cite{ekelandhofer}. (Other notions of capacity exist. For instance, the notion of an \emph{analytic capacity} has been studied in complex geometry. See~\cite{BS}, for example.)

In~\cite{barron}, it was observed by Barron and Shafiee that a more general non-squeezing theorem, at least in the affine case, holds for the ``multisymplectic geometries'' given by the powers \(\omega^k\) of the symplectic form \(\omega\) on \(\mathbb{R}^{2n}\), and that there is a corresponding ``multisymplectic capacity'' for these geometries (still in the affine case). The motivation for the present paper is to find a generalization of the classical non-squeezing theorem and capacities (including the applications in~\cite{ekelandhofer}) to more general geometries along the lines of~\cite{barron}. Our generalizations in Section~\ref{sec:calibrations} (to general calibrations) and Section~\ref{sec:slag} (the special Lagrangian case) mostly remain in the affine setting.

We court two audiences: (i) symplectic geometers who may not be aware of the potentially fertile source of generalizations of some classical results of symplectic geometry to the setting of calibrated geometry; and (ii) experts in calibrated geometry who may not be intimately familiar with some of these classical symplectic results. Because of this dual audience, we choose to organize our paper as follows.

Section~\ref{sec:classical} is a review of some classical results, both to fix notation and to keep the paper reasonably self-contained. We briefly recall the non-linear version of Gromov's non-squeezing theorem in Section~\ref{section:2.1}, and in Section~\ref{section:2.2} we prove the affine version of the theorem. The affine non-squeezing theorem suggests a ``non-squeezing property'' for linear maps, which we prove characterizes symplectic and anti-symplectic maps. We then define the \emph{linear symplectic width} and prove that a linear map is symplectic or anti-symplectic if and only if it preserves the linear symplectic width of every ellipsoid (``affine rigidity''). In Section~\ref{section:2.3}, we show that every ellipsoid admits a symplectic normal form and we compute the linear symplectic width of an ellipsoid in terms of its normal form. In Section~\ref{section:2.4}, we assume that Gromov's non-squeezing theorem holds and we explore its consequences for the rigidity of symplectic geometry; in particular, we establish a non-linear version of the affine rigidity theorem.

In Section~\ref{sec:powers}, we study non-squeezing for the powers of the symplectic form. In Section~\ref{section:3.1} we state and prove the affine non-squeezing theorem for \(\omega^k\) discovered in~\cite[Theorem 2.16]{barron} and proceed to improve the bound in their result. One of the crucial technical ingredients in this proof is that the stabilizer of \(\omega^k\) is closed under transposition. This follows from the results of Section~\ref{section:3.2}, where we study the stabilizer in \(\mathrm{GL}(2n, \mathbb{R})\) of \(\omega^k\) and give a complete description of it in terms of the usual symplectic group. We apply this to give a second proof of the affine non-squeezing theorem for \(\omega^k\) by reduction to the symplectic case. In Section~\ref{section:3.3}, we apply our characterization of the stabilizer of \(\omega^k\) to show that \(\omega^k\) enjoys a non-linear non-squeezing theorem as well (again a consequence of the symplectic case), and then we give applications to symplectic manifolds (in particular we obtain a new proof of the main theorem of the recent paper~\cite{alizadeh}).

In Section~\ref{sec:calibrations}, motivated by the proof in Section~\ref{section:3.1} of the affine non-squeezing theorem for \(\omega^k\), we show that a \emph{calibration} on a real inner product space enjoys an affine non-squeezing theorem when its stabilizer is closed under transposition. We also show that this is not a necessary condition for a calibration to satisfy an affine non-squeezing theorem. In Section~\ref{section:4.2}, we briefly study the particular case of the \(\mathrm{G}_2\) calibration, where we show that it is very easy to prove that a non-linear non-squeezing theorem holds (due to the fact that the associative \(3\)-form $\varphi$ determines the metric.)

In Section~\ref{sec:slag}, we study the special Lagrangian case. Section~\ref{section:5.1} reviews the required linear-algebraic facts. In Section~\ref{section:5.2}, we show that for \(n \leq 3\), the special Lagrangian calibration \(\mathrm{Re}(dz^1 \wedge \cdots \wedge dz^n)\) on \(\mathbb{R}^{2n}\) enjoys the affine non-squeezing theorem. We then consider, for all \(n\), affine non-squeezing and rigidity for the complex-valued \(n\)-form \(dz^1 \wedge \cdots \wedge dz^n\). In particular, we establish an affine non-squeezing theorem for this form and a restricted version of the affine rigidity theorem for complex-linear maps.

We conclude with a discussion of some further questions in Section~\ref{sec:conclusion}.

\textbf{Acknowledgements.} Much of this work was completed while the first author earned a Master of Mathematics degree at the University of Waterloo under the supervision of the second author. The authors gratefully acknowledge useful discussions with Shengda Hu and Ben Webster, and thank the anonymous referees for very useful feedback on an earlier version of this article, which in particular allowed us to strengthen our Theorem~\ref{thm:slagrid}, as the complex-linearity assumption is in fact necessary.

\textbf{Data availability statement.} There were no datasets generated for this research project.

\textbf{Statement of interests.} All authors certify that they have no affiliations with or involvement in any organization or entity with any financial interest or non-financial interest in the subject matter or materials discussed in this manuscript.

\textbf{Funding declaration.} The research of the second author is partially supported by the Natural Sciences and Engineering Research Council of Canada (NSERC), grant number RGPIN-2025-03951.

\section{Classical non-squeezing and capacities} \label{sec:classical}

We discuss Gromov's non-squeezing theorem with a focus on the linear case. The presentation here mostly follows~\cite{mcduffsalamon} with some contributions from~\cite{ekelandhofer}. We give precise references for each of the individual statements. But as stated in the introduction, we present complete proofs to keep the paper self-contained, since the calibrated geometry community may be less familiar with this material.

\subsection{Gromov's non-squeezing theorem}
\label{section:2.1}

Consider \(\mathbb{R}^{2n}\) equipped with the standard symplectic form
\[
 \omega = dx^1 \wedge dy^1 + \cdots + dx^n \wedge dy^n.
\]
We denote by \(B(r)\) the closed ball of radius \(r\) centered at the origin in \(\mathbb{R}^{2n}\) and by \(Z(R)\) the closed symplectic cylinder
\[
 Z(R) = \{ (x_1, \dots, x_n, y_1, \dots, y_n) \in \mathbb{R}^{2n} : x_1^2 + y_1^2 \leq R^2 \}.
\]

\begin{theorem} \label{thm:gns}
 (Gromov's non-squeezing theorem) If there exists a symplectic embedding of \((B(r), \omega)\) into \((Z(R), \omega)\), then \(r \leq R\).
\end{theorem}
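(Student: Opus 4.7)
The plan is to follow Gromov's original approach via pseudoholomorphic curves, as presented in McDuff--Salamon. Suppose $\phi \colon B(r) \hookrightarrow Z(R)$ is a symplectic embedding; the goal is to conclude $r \le R$.

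First I would reduce to a closed symplectic target. Since $\phi(B(r))$ is compact, its image is contained in $D^2(R) \times (-L, L)^{2n-2}$ for some large $L$. Embedding each interval factor symplectically into a circle of sufficiently large area, and embedding the open disk $D^2(R)$ symplectically into the sphere $S^2(R)$ of total area $\pi R^2$, I obtain a symplectic embedding $\phi \colon B(r) \hookrightarrow M := S^2(R) \times T^{2n-2}$ into a closed symplectic manifold $(M, \omega_M)$. It is enough to prove non-squeezing into this $M$.

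Next, choose an $\omega_M$-compatible almost complex structure $J$ on $M$ that agrees with $\phi_\ast J_0$ on $\phi(B(r))$; this is possible because the space of $\omega_M$-compatible almost complex structures is contractible, so $J_0$ can be interpolated to a fixed reference $J$ off of $\phi(B(r))$. Let $p = \phi(0)$. The technical heart of the argument is an existence result from the theory of pseudoholomorphic curves: for this $J$ and this $p$, there exists a non-constant $J$-holomorphic sphere $u \colon S^2 \to M$ with $p \in u(S^2)$ representing the homology class $A = [S^2 \times \{\mathrm{pt}\}]$. Since $\langle [\omega_M], A \rangle = \pi R^2$, the symplectic area of $u$ equals $\pi R^2$.

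To extract the inequality, pull back: $\Sigma := \phi^{-1}(u(S^2)) \cap B(r)$ is, away from a singular set of measure zero, a proper $J_0$-holomorphic curve in the Euclidean ball passing through the origin. The monotonicity inequality for minimal surfaces in $\mathbb{R}^{2n}$ (equivalently, the standard isoperimetric bound for a complex analytic subvariety through a point) gives $\mathrm{Area}(\Sigma) \ge \pi r^2$. On the other hand, since $\Sigma$ is $J_0$-holomorphic and $\phi$ is symplectic, the calibration identity yields $\mathrm{Area}(\Sigma) = \int_\Sigma \omega$, which equals the symplectic area of the corresponding portion of $u(S^2)$ and is thus at most $\int_{S^2} u^\ast \omega_M = \pi R^2$. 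Hence $\pi r^2 \le \pi R^2$, i.e.\ $r \le R$.

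The main obstacle is, unsurprisingly, the existence result for the $J$-holomorphic sphere in the prescribed class through the prescribed point, together with the Gromov compactness and transversality theory needed to make the area comparison rigorous. This is the deep input of the theorem, and I would invoke it from the literature rather than attempt to reprove it; the remaining ingredients (the compactification $Z(R) \hookrightarrow M$, the monotonicity inequality, and the calibration identity $\omega|_\Sigma = d\mathrm{vol}_\Sigma$ on $J_0$-holomorphic surfaces) are comparatively routine.
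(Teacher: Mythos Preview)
The paper does not actually prove this theorem. Immediately after stating it, the authors write: ``We will not prove Gromov's non-squeezing theorem in its full generality here, but we will prove the theorem for affine symplectic maps in the next section.'' They cite Gromov's pseudoholomorphic-curve proof and the Ekeland--Hofer capacity proof, and thereafter use the result as a black box (e.g.\ in Section~\ref{section:2.4}).

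Your sketch is precisely Gromov's original argument as presented in McDuff--Salamon, which is exactly the reference the paper points to but does not reproduce. The outline is sound, and you correctly isolate the deep input as the existence of a $J$-holomorphic sphere in the class $[S^2 \times \{\mathrm{pt}\}]$ through the given point, together with Gromov compactness. One small technical wrinkle: since the paper works with the \emph{closed} ball and \emph{closed} cylinder, the image $\phi(B(r))$ may meet the boundary circle $\{x_1^2 + y_1^2 = R^2\}$, so it need not lie in the open disk cross a box as you assert. The standard fix is to compactify into a sphere of area $\pi R^2 + \varepsilon$ (or restrict to $B(r - \varepsilon)$, whose image lies in the interior of $Z(R)$) and let $\varepsilon \to 0$ at the end.
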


We could have stated Gromov's non-squeezing theorem for the corresponding open ball and open cylinder, but it will be more convenient for us to work with their closures.

It is crucial that the cylinder \(Z(R)\) is defined with respect to the symplectic splitting \(\mathbb{R}^2_{x_1, y_1} \times \mathbb{R}^{2n-2}\), since there is no analogue of the non-squeezing theorem for, say, the ball and the cylinder
\[
 B(R) \times \mathbb{R}^n = \{(x, y) \in \mathbb{R}^{2n} : |x|^2 \leq R^2\}
\]
defined with respect to the Lagrangian splitting \(\mathbb{R}_x^n \times \mathbb{R}_y^n\). Indeed, for every \(\delta > 0\), the map \((x, y) \mapsto (\delta x, \delta^{-1} y)\) is a symplectic embedding which takes the unit ball into \(B(\delta) \times \mathbb{R}^{n}\).

We will not prove Gromov's non-squeezing theorem in its full generality here, but we will prove the theorem for affine symplectic maps in the next section. Gromov's proof in~\cite{gromov} used the theory of pseudo-holomorphic curves. Another proof was given by Ekeland and Hofer in~\cite{ekelandhofer} using the construction of a particular kind of symplectic invariant known as a \emph{symplectic capacity}. Symplectic capacities and their applications will be the focus of the rest of this section.

\subsection{Affine non-squeezing, capacities, and rigidity}
\label{section:2.2}

We first establish some notation. We denote the symplectic group by 
\[
 \mathrm{Sp}(2n, \mathbb{R}) = \{ \Psi \in \mathrm{GL}(2n, \mathbb{R}) : \Psi^* \omega = \omega \}.
\]
We  denote the standard symplectic basis of \(\mathbb{R}^{2n}\) by \(e_1, \dots, e_n, f_1, \dots, f_n\), so that
\[
 \omega = e^1 \wedge f^1 + \cdots + e^n \wedge f^n.
\]
We will make use of the almost complex structure \(J\) on \(\mathbb{R}^{2n}\) associated to the symplectic structure \(\omega\) and the standard Euclidean metric \(g\), defined by \(\omega(x, y) = g(Jx, y)\). Note that
\[
 \Psi \in \mathrm{Sp}(2n, \mathbb{R}) \text{ if and only if } \Psi^T J \Psi = J.
\]
Before proving the non-squeezing theorem for affine symplectic maps, we need the following fact about the symplectomorphism group. In general, when we try to generalize the affine case of the non-squeezing theorem to other calibrations in later sections, the idea of certain groups being closed under transposition will become important.

\begin{lemma}
 The symplectic group is closed under transposition.
\end{lemma}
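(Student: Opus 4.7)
The plan is to use the matrix criterion stated just above the lemma: $\Psi \in \mathrm{Sp}(2n, \mathbb{R})$ if and only if $\Psi^T J \Psi = J$. Closure under transposition then reduces to showing that the relation $\Psi^T J \Psi = J$ implies $(\Psi^T)^T J \Psi^T = \Psi J \Psi^T = J$, which is a purely algebraic manipulation at the level of matrices.

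First I would take determinants in $\Psi^T J \Psi = J$ to conclude $\det(\Psi)^2 = 1$, so $\Psi$ is invertible and rearrangement is legitimate. Using $J^2 = -I$ (so $J^{-1} = -J$), the defining identity is equivalent to $\Psi^T = J \Psi^{-1} J^{-1}$. Substituting this into $\Psi J \Psi^T$ gives
\[
\Psi J \Psi^T \;=\; \Psi J (J \Psi^{-1} J^{-1}) \;=\; \Psi J^2 \Psi^{-1} J^{-1} \;=\; -J^{-1} \;=\; J,
\]
which is precisely the condition that $\Psi^T \in \mathrm{Sp}(2n, \mathbb{R})$.

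There is no real obstacle; this is a short computation. The only point worth flagging is that closure under transposition is not visible from the intrinsic definition $\Psi^* \omega = \omega$, since transposition is not a canonical operation on endomorphisms of a symplectic vector space in the absence of an auxiliary inner product. The argument therefore has to pass through the matrix identity involving $J$, which silently couples the symplectic form to the Euclidean metric. This is precisely the subtlety that will make the analogous closure property a delicate (and sometimes false) condition for the other calibrations studied in later sections.
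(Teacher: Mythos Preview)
Your proof is correct and follows essentially the same route as the paper: both argue via the matrix identity $\Psi^T J \Psi = J$ and the fact $J^{-1} = -J$ to deduce $\Psi J \Psi^T = J$, with only cosmetic differences in how the algebra is arranged (the paper passes through the symplecticity of $\Psi^{-1}$ and then inverts, while you solve for $\Psi^T$ and substitute). Your closing remark about the role of the auxiliary inner product is apt and anticipates exactly the issue the paper exploits in Sections~\ref{sec:calibrations} and~\ref{sec:slag}.
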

\begin{proof}
 If \(\Psi\) is symplectic, then so is \(\Psi^{-1}\). This implies that \((\Psi^{-1})^T J \Psi^{-1} = J\). Taking the inverses of both sides and using the fact that \(J^{-1} = -J\) gives \(\Psi J \Psi^T = J\). In other words, \((\Psi^T)^T J \Psi^T = J\), implying that \(\Psi^T\) is symplectic.
\end{proof}

We are now ready to state and prove the affine non-squeezing theorem.
\begin{theorem}[{\cite[Theorem 2.4.1]{mcduffsalamon}}] \label{thm:affns}
 (Affine non-squeezing) If there exists an affine symplectomorphism taking \((B(r), \omega)\) into \((Z(R), \omega)\), then \(r \leq R\).
\end{theorem}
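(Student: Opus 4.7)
The plan is to reduce the affine case to the linear case by absorbing the translation, and then to reduce the full $2n$-dimensional inclusion to a two-dimensional area comparison by projecting onto the first symplectic plane. The transpose lemma just proved will play its role at the very end.

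First I would reduce to the linear case: an affine symplectomorphism has the form $x \mapsto \Psi x + v$ with $\Psi \in \mathrm{Sp}(2n, \mathbb{R})$, and since any translation of the image can be compensated by a translation inside $Z(R)$ (which does not affect whether the image lies in a disk of radius $R$ after projection), we may assume $\Psi(B(r)) \subseteq Z(R)$. Next let $\pi : \mathbb{R}^{2n} \to \mathbb{R}^2$ be the projection onto the $(x_1, y_1)$-plane, and set $L = \pi \circ \Psi : \mathbb{R}^{2n} \to \mathbb{R}^2$. By hypothesis, $L(B(r))$ is contained in the closed disk of radius $R$ in $\mathbb{R}^2$, so its area is at most $\pi R^2$.

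The second step is to compute the area of $L(B(r))$ from below. Writing $\xi_1, \xi_2$ for the standard basis of $\mathbb{R}^2$, singular value decomposition shows that $L(B(r))$ is an ellipse of area $\pi r^2 \|L^T \xi_1 \wedge L^T \xi_2\|$, where the norm is the one induced on $\Lambda^2 \mathbb{R}^{2n}$ by the Euclidean metric. Since $L^T \xi_1 = \Psi^T e_1$ and $L^T \xi_2 = \Psi^T f_1$, this area equals $\pi r^2 \|\Psi^T e_1 \wedge \Psi^T f_1\|$. To bound the wedge from below in terms of $\omega$, I would invoke the ``Hermitian Cauchy--Schwarz'' inequality: for all $u, v \in \mathbb{R}^{2n}$,
\[
 \|u \wedge v\|^2 \;=\; \|u\|^2 \|v\|^2 - \langle u, v \rangle^2 \;\geq\; \omega(u, v)^2,
\]
which follows from $\omega(u,v) = g(Ju, v)$ and the ordinary Cauchy--Schwarz inequality applied to $Ju$ and $v$ together with $\|Ju\| = \|u\|$.

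Finally, applying this inequality to $u = \Psi^T e_1$ and $v = \Psi^T f_1$ leaves us needing to evaluate $\omega(\Psi^T e_1, \Psi^T f_1)$. This is precisely where the lemma that the symplectic group is closed under transposition enters: since $\Psi^T \in \mathrm{Sp}(2n, \mathbb{R})$, we get $\omega(\Psi^T e_1, \Psi^T f_1) = \omega(e_1, f_1) = 1$. Combining everything,
\[
 \pi r^2 \;\leq\; \pi r^2 \|\Psi^T e_1 \wedge \Psi^T f_1\| \;=\; \mathrm{area}(L(B(r))) \;\leq\; \pi R^2,
\]
and hence $r \leq R$. The main obstacle, in my view, is identifying the right lower bound for $\|u \wedge v\|$; once one spots that $\omega(u,v)^2 + \langle u, v \rangle^2 \leq \|u\|^2 \|v\|^2$ is the correct Hermitian form of Cauchy--Schwarz, the transpose lemma converts a symplectic property of $\Psi$ into a statement about $\Psi^T$ applied to the \emph{target} basis $e_1, f_1$, which is exactly what the area formula produces.
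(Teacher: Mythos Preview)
Your argument is correct and takes a genuinely different route from the paper's. The paper argues pointwise: from $|\omega(\Psi^T e_1, \Psi^T f_1)| = 1$ and Cauchy--Schwarz it deduces that one of $|\Psi^T e_1|,|\Psi^T f_1|$ is at least $1$, then plugs the single vector $z = \pm r\,\Psi^T e_1/|\Psi^T e_1|$ into the cylinder constraint, choosing the sign to absorb the translation $z_0$. You instead project onto the $(x_1,y_1)$-plane and compare two-dimensional areas: the projected ellipse has area $\pi r^2\|\Psi^T e_1 \wedge \Psi^T f_1\|$, sits inside a disk of area $\pi R^2$, and the wedge norm is bounded below by $|\omega(\Psi^T e_1,\Psi^T f_1)|=1$. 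Both approaches invoke the transpose lemma at exactly the same moment. The paper's pointwise argument is the one that gets generalized later to arbitrary calibrations (Proposition~\ref{prop:calibns}), where one only has the comass-one bound $|\alpha(v_1,\ldots,v_k)|\leq |v_1|\cdots|v_k|$; your area approach would also generalize (the projected ball has $k$-volume $\omega_k r^k\|\Psi^T e_1\wedge\cdots\wedge\Psi^T e_k\|$, and comass one bounds this $k$-vector norm from below), though with slightly more machinery.

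Two small points. First, your reduction to the linear case is phrased imprecisely: you cannot literally arrange $\Psi(B(r))\subseteq Z(R)$ by translating inside the cylinder, since $z_0$ may have a nonzero $(x_1,y_1)$-component. What is true, and sufficient for you, is that $\pi(\psi(B(r)))=L(B(r))+\pi(z_0)$ lies in the disk of radius $R$, so $L(B(r))$ lies in a \emph{translated} disk of the same radius and hence has area at most $\pi R^2$. Second, your justification of the inequality $\omega(u,v)^2+\langle u,v\rangle^2\leq\|u\|^2\|v\|^2$ is incomplete: ordinary Cauchy--Schwarz applied to $Ju$ and $v$ only gives $\omega(u,v)^2\leq\|u\|^2\|v\|^2$. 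You also need $u\perp Ju$, so that $\{u/\|u\|,Ju/\|u\|\}$ is orthonormal and Bessel's inequality yields $\langle u,v\rangle^2+\langle Ju,v\rangle^2\leq\|u\|^2\|v\|^2$.
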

\begin{proof}
 Suppose that \(\psi\) is an affine symplectomorphism such that \(\psi(B(r)) \subseteq Z(R)\), where \(\psi\) is given by \(\psi(z) = \Psi z + z_0\) for \(\Psi\) in \(\mathrm{Sp}(2n, \mathbb{R})\) and \(z_0\) in \(\mathbb{R}^{2n}\). Since the symplectic group is closed under transposition, \(\Psi^T\) is also symplectic, and it follows that
 \[
 1 = |\omega(e_1, f_1)| = |\omega(\Psi^T e_1, \Psi^T f_1)|.
 \]
 From the fact that \(\omega(\cdot, \cdot) = g(J \cdot, \cdot)\) and the Cauchy-Schwarz inequality, we have that 
 \[
 1 = |\omega(\Psi^T e_1, \Psi^T f_1)| \leq |J\Psi^Te_1| |\Psi^T f_1| = |\Psi^T e_1| |\Psi^T f_1|.
 \]
 So, either \(\Psi^T e_1\) or \(\Psi^T f_1\) has norm at least one. We may assume without loss of generality that \(|\Psi^T e_1| \geq 1\). Now, define \(z = \pm r \cdot \Psi^T e_1 / |\Psi^T e_1|\) with the sign to be determined. Since \(z \in B(r)\), it follows that \(\psi(z) \in Z(R)\), hence 
 \begin{align*}
 R^2 &\geq \langle e_1, \psi(z)\rangle^2 + \langle f_1, \psi(z) \rangle^2 \\
 &\geq \langle e_1, \Psi z + z_0 \rangle^2 \\
 &= [ \langle \Psi^T e_1, z \rangle + \langle e_1, z_0 \rangle ]^2 \\
 &= [ \pm r|\Psi^T e_1| + \langle e_1, z_0 \rangle ]^2.
 \end{align*}
 We choose the sign to be the same as that of \(\langle e_1, z_0 \rangle\). Hence \(R^2 \geq r^2 |\Psi^T e_1|^2 \geq r^2\), and so \(R \geq r\).
\end{proof}

The only place where we used the fact that \(\Psi\) (rather, \(\Psi^T\)) was symplectic was in the equality \(|\omega(e_1, f_1)| = |\omega(\Psi^T e_1, \Psi^T f_1)|\), so we see that the affine non-squeezing theorem holds for affine \emph{anti}-symplectomorphisms as well. We can also deduce the result for anti-symplectomorphisms from the result for symplectomorphisms by composing with an anti-symplectomorphism of the unit ball with itself. (This method is used in the proof of the upcoming Proposition~\ref{prop:antisymp}.)

A subset \(B\) of \(\mathbb{R}^{2n}\) is a \emph{linear} (resp.\ \emph{affine}) \emph{symplectic ball of radius \(r\)} if it is linearly (resp.\ affinely) symplectomorphic to \(B(r)\). Similarly, \emph{linear} (resp.\ \emph{affine}) \emph{symplectic cylinders} are subsets of \(\mathbb{R}^{2n}\) that are linearly (resp.\ affinely) symplectomorphic to \(Z(R)\) for some \(R > 0\). A consequence of the affine non-squeezing theorem is that the radius \(R\) is an affine symplectic invariant of a symplectic cylinder.

We say that a linear map \(\Psi \colon \mathbb{R}^{2n} \to \mathbb{R}^{2n}\) has the \emph{linear non-squeezing property} if, for every linear symplectic ball \(B\) of radius \(r\) and every linear symplectic cylinder \(Z\) of radius \(R\), the condition \(\Psi(B) \subseteq Z\) implies that \(r \leq R\). The affine non-squeezing theorem implies that every linear symplectomorphism has the linear non-squeezing property. Moreover, linear anti-symplectomorphisms also have the linear non-squeezing property. It is a perhaps surprising fact, which we now prove, that this property characterizes such maps.

\begin{theorem}[{\cite[Theorem 2.4.2]{mcduffsalamon}}] \label{thm:affrigid}
 (Affine rigidity) Let \(\Psi\) be a non-singular linear map from \(\mathbb{R}^{2n}\) to itself such that both \(\Psi\) and \(\Psi^{-1}\) have the linear non-squeezing property. Then \(\Psi\) is either symplectic or anti-symplectic.
\end{theorem}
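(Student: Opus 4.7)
My plan is to reformulate both non-squeezing hypotheses as algebraic identities on the bilinear form $\omega(\Psi^T u, \Psi^T v)$ and then conclude via a short polynomial argument.

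The first step is to translate the non-squeezing property into a statement about $\Psi^T$. Every linear symplectic cylinder has the form $\psi(Z(R))$ for some $\psi \in \mathrm{Sp}(2n, \mathbb{R})$, and a computation analogous to the proof of Theorem~\ref{thm:affns} shows that $\Psi(\phi(B(r))) \subseteq \psi(Z(R))$ is equivalent to $r^2 \rho(\phi^T \Psi^T u, \phi^T \Psi^T v) \leq R^2$, where $(u, v) = ((\psi^{-1})^T e_1, (\psi^{-1})^T f_1)$ is an arbitrary symplectic pair and $\rho(c, d)$ denotes the largest eigenvalue of $cc^T + dd^T$ on $\mathbb{R}^{2n}$. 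As $\phi$ and $\psi$ range over $\mathrm{Sp}(2n, \mathbb{R})$, the non-squeezing property of $\Psi$ becomes $\inf_{\phi \in \mathrm{Sp}(2n, \mathbb{R})} \rho(\phi^T \Psi^T u, \phi^T \Psi^T v) \geq 1$ for every symplectic pair $(u, v)$. The technical heart is the sublemma
\[
 \inf_{\phi \in \mathrm{Sp}(2n, \mathbb{R})} \rho(\phi^T a, \phi^T b) = |\omega(a, b)| \qquad \text{for all } a, b \in \mathbb{R}^{2n}.
\]
The lower bound follows from the chain $\rho(c, d) \geq |c \wedge d| \geq |\omega(c, d)|$ combined with $\mathrm{Sp}$-invariance of $\omega$; for the upper bound, the transitivity of $\mathrm{Sp}(2n, \mathbb{R})$ on vector pairs with a fixed value of $\omega$ lets us reduce $(a, b)$ to $(\lambda e_1, \omega(a, b) \lambda^{-1} f_1)$ and then optimize over $\lambda$. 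Combined with bilinearity, the linear non-squeezing property of $\Psi$ is thus equivalent to
\[
 |\omega(\Psi^T u, \Psi^T v)| \geq |\omega(u, v)| \qquad \text{for all } u, v \in \mathbb{R}^{2n}.
\]

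Applying the same reformulation to $\Psi^{-1}$ and substituting $u \mapsto \Psi^T u$ and $v \mapsto \Psi^T v$ produces the reverse inequality, so the two hypotheses combine to give the equality $|\omega(\Psi^T u, \Psi^T v)| = |\omega(u, v)|$ for all $u, v \in \mathbb{R}^{2n}$. Squaring and factoring yields the polynomial identity
\[
 \bigl(\omega(\Psi^T u, \Psi^T v) - \omega(u, v)\bigr)\bigl(\omega(\Psi^T u, \Psi^T v) + \omega(u, v)\bigr) \equiv 0
\]
on $\mathbb{R}^{2n} \times \mathbb{R}^{2n}$. Since the polynomial ring in the coordinates of $u$ and $v$ is an integral domain, one of the two factors is identically zero, giving either $\omega(\Psi^T u, \Psi^T v) = \omega(u, v)$ or $\omega(\Psi^T u, \Psi^T v) = -\omega(u, v)$. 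These two conditions say that $\Psi^T$ is symplectic or anti-symplectic, respectively, and since $\mathrm{Sp}(2n, \mathbb{R})$ is closed under transposition, the same holds for $\Psi$.

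The main technical obstacle is the sublemma $\inf_\phi \rho(\phi^T a, \phi^T b) = |\omega(a, b)|$; once this infimum is computed, the remainder of the argument is essentially formal, and the clean $\omega$-preservation reformulation obtained in the first step is what makes the polynomial argument in the final step possible.
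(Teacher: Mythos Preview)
Your proof is correct, and the key mechanism---comparing $\omega(\Psi^T u,\Psi^T v)$ with $\omega(u,v)$ via a suitable symplectic change of basis---is the same one the paper uses. The packaging, however, is genuinely different. The paper argues by contradiction: if $\Psi$ is neither symplectic nor anti-symplectic, it finds (by a density argument) a single pair $(u,v)$ with $\omega(u,v)=1$ and $0<|\omega(\Psi^T u,\Psi^T v)|<1$, and then explicitly constructs symplectic matrices $\Phi,\Phi'$ so that $A^T=\Phi^T\Psi(\Phi'^T)^{-1}$ maps $B(1)$ into $Z(\lambda)$ with $\lambda<1$. Your sublemma $\inf_{\phi\in\mathrm{Sp}}\rho(\phi^T a,\phi^T b)=|\omega(a,b)|$ abstracts exactly that construction into a clean quantitative statement and upgrades it to an \emph{equivalence} between the non-squeezing property and the inequality $|\omega(\Psi^T u,\Psi^T v)|\ge|\omega(u,v)|$; the paper only uses (implicitly) one direction of this, for one pair. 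Your final step---deducing $(\Psi^T)^*\omega=\pm\omega$ from $|(\Psi^T)^*\omega|=|\omega|$ via the integral-domain property of the polynomial ring---replaces the paper's open--dense ``simultaneous witness'' argument and is arguably slicker.

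One small point to tighten: in your sketch of the sublemma's upper bound, the transitivity of $\mathrm{Sp}(2n,\mathbb{R})$ on pairs with a fixed $\omega$-value, and hence the reduction to $(\lambda e_1,\omega(a,b)\lambda^{-1}f_1)$, is only valid as stated when $\omega(a,b)\neq 0$. For $\omega(a,b)=0$ with $a,b$ linearly independent (the case that arises, since $\Psi$ is non-singular and $\omega(u,v)=1$), reduce instead to $(e_1,e_2)$ and apply the symplectic dilation $e_i\mapsto\lambda^{-1}e_i$, $f_i\mapsto\lambda f_i$ to see that the infimum is $0$; this immediately contradicts non-squeezing, so that case cannot occur. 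With this patch the argument is complete.
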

\begin{proof}
 Suppose that \(\Psi\) is neither symplectic nor anti-symplectic. Then neither is \(\Psi^T\), so that \((\Psi^T)^*\omega \neq \omega\) and \((\Psi^T)^*\omega \neq -\omega\). The intersection of the two open and dense sets of pairs \((x, y)\) such that \(((\Psi^T)^*\omega)(x, y) \neq \omega(x, y)\) and \(((\Psi^T)^*\omega)(x, y) \neq -\omega(x, y)\) is non-empty, so we can find a pair \((u, v)\) which witnesses both conditions simultaneously:
 \[
 \omega(\Psi^T u, \Psi^T v) \neq \omega(u, v) \,\,\text{ and }\,\, \omega(\Psi^T u, \Psi^T v) \neq -\omega(u, v).
 \]
 This condition on \(u\) and \(v\) is an open condition, so we may perturb \(u\) and \(v\) to ensure that \(\omega(u, v) \neq 0\) and that \(\omega(\Psi^T u, \Psi^T v) \neq 0\), the latter requiring the non-singularity of \(\Psi\). By replacing \(\Psi\) by \(\Psi^{-1}\) if necessary, we may assume that \(|\omega(\Psi^Tu, \Psi^T v)| < |\omega(u, v)|\), and then finally by scaling \(u\) if necessary we may assume that
 \[
 0 < |\omega(\Psi^T u, \Psi^T v)| < \omega(u, v) = 1.
 \]
 Since \(\omega(u, v) = 1\), there exists a linear symplectomorphism \(\Phi\) such that \(\Phi e_1 = u\) and \(\Phi f_1 = v\). If we define \(\lambda = \sqrt{|\omega(\Psi^T u, \Psi^T v)|}\), then similarly there exists a linear symplectomorphism \(\Phi'\) such that \(\Phi'e_1 = \pm \lambda^{-1} \Psi^T u\) and \(\Phi' f_1 = \lambda^{-1} \Psi^T v\).

 Now, define \(A = (\Phi')^{-1} \Psi^T \Phi\). By construction, \(Ae_1 = \pm \lambda e_1\) and \(Af_1 = \lambda f_1\). It follows that \(A^T(B(1)) \subseteq Z(\lambda)\), for if \(z \in B(1)\), then
 \[
 \langle e_1, A^Tz \rangle^2 + \langle f_1, A^T z \rangle^2 = \langle \pm \lambda e_1, z \rangle^2 + \langle \lambda f_1, z \rangle^2 = \lambda^2( \langle e_1, z \rangle^2 + \langle f_1, z \rangle^2 ) \leq \lambda^2.
 \]
 Since \(\lambda < 1\), the map \(A^T = \Phi^T \Psi (\Phi'^T)^{-1}\) cannot have the linear non-squeezing property, implying that \(\Psi\) cannot have it either.
\end{proof}

Using the affine rigidity theorem, we can in fact characterize linear symplectomorphisms and anti-symplectomorphisms as exactly those maps which preserve a certain affine symplectic invariant known as the \emph{linear symplectic width}, or \emph{Gromov width} (see Theorem~\ref{thm:linchar}). The linear symplectic width of a (non-empty) subset \(A\) of \(\mathbb{R}^{2n}\) is defined to be 
\[
 w_L(A) := \sup\{ \pi r^2 : \Psi(B(r)) \subseteq A \text{ for some affine symplectomorphism } \Psi \}.
\]
We take \(\pi r^2\) in the definition of the linear symplectic width so that for \(\mathbb{R}^2\), the linear symplectic width and the area of a disk agree. The linear symplectic width satisfies the following three properties:
\begin{itemize}
 \item \emph{Monotonicity}: If \(A\) and \(B\) are subsets of \(\mathbb{R}^{2n}\) and if \(\Psi(A) \subseteq B\) for some affine symplectomorphism \(\Psi\), then \(w_L(A) \leq w_L(B)\).

 \item \emph{Conformality}: \(w_L(\lambda A) = \lambda^2 w_L(A)\).

 \item \emph{Non-triviality}: \(w_L(B(r)) = w_L(Z(r)) = \pi r^2\).
\end{itemize}
The monotonicity and conformality properties of \(w_L\) are immediate from the definition, and the non-triviality property is proved by the following chain of inequalities:
\begin{equation}
 \label{eq:2.1}
 \pi r^2 \leq w_L(B(r)) \leq w_L(Z(r)) \leq \pi r^2.
\end{equation}
The first inequality is immediate from the definition, the second follows from monotonicity and the fact that \(B(r) \subseteq Z(r)\), and the third follows from the affine non-squeezing theorem.

The monotonicity property implies that the linear symplectic width of a subset of \(\mathbb{R}^{2n}\) is an affine symplectic invariant, as the following result shows.
\begin{proposition}
 \label{prop:linwidth}
 If \(\Psi\) is an affine symplectomorphism, then \(w_L(A) = w_L(\Psi(A))\) for every subset \(A\) of \(\mathbb{R}^{2n}\).
\end{proposition}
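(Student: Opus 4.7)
The plan is to deduce Proposition~\ref{prop:linwidth} directly from the monotonicity property of $w_L$, applied twice (once to $\Psi$ and once to its inverse). Since the proposition is symmetric in $A$ and $\Psi(A)$, it will be enough to establish both inequalities $w_L(A) \le w_L(\Psi(A))$ and $w_L(\Psi(A)) \le w_L(A)$.

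First I would observe that the collection of affine symplectomorphisms of $\mathbb{R}^{2n}$ forms a group: if $\Psi(z) = \Phi z + z_0$ with $\Phi \in \mathrm{Sp}(2n,\mathbb{R})$, then $\Psi^{-1}(z) = \Phi^{-1}z - \Phi^{-1}z_0$, and $\Phi^{-1}$ is symplectic because $\mathrm{Sp}(2n,\mathbb{R})$ is a group. In particular $\Psi^{-1}$ is again an affine symplectomorphism. Next, the inclusion $\Psi(A) \subseteq \Psi(A)$ is trivially realized by the affine symplectomorphism $\Psi$, so monotonicity yields $w_L(A) \le w_L(\Psi(A))$. Applying the same reasoning to $\Psi^{-1}$ and the set $\Psi(A)$ gives $w_L(\Psi(A)) \le w_L(\Psi^{-1}(\Psi(A))) = w_L(A)$. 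Combining the two inequalities completes the proof.

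There is no real obstacle here: the content of the proposition is essentially a restatement of monotonicity once one notices that affine symplectomorphisms are invertible within the same class. The only point worth being careful about is verifying that $\Psi^{-1}$ lies in the same class of maps used to define $w_L$, which is immediate from the group structure of the affine symplectic group.
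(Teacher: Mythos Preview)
Your proof is correct and follows the same approach as the paper: apply monotonicity to $\Psi$ to get $w_L(A) \le w_L(\Psi(A))$, then apply monotonicity to the affine symplectomorphism $\Psi^{-1}$ to get the reverse inequality. The paper's proof is slightly terser, but the content is identical.
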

\begin{proof}
 By monotonicity, \(w_L(A) \leq w_L(\Psi(A))\). Since \(\Psi^{-1}\) is also an affine symplectomorphism, we have by monotonicity again that \(w_L(\Psi(A)) \leq w_L(\Psi^{-1}(\Psi(A))) = w_L(A)\).
\end{proof}

In particular, any linear symplectic ball (resp.\ cylinder) of radius \(r\) (resp.\ \(R\)) has linear symplectic width \(\pi r^2\) (resp.\ \(\pi R^2\)). We will also need the following fact that the linear symplectic width is also an affine anti-symplectic invariant.

\begin{proposition} \label{prop:antisymp}
 If \(\Psi\) is an affine anti-symplectomorphism, then \(w_L(A) = w_L(\Psi(A))\) for every subset \(A\) of \(\mathbb{R}^{2n}\).
\end{proposition}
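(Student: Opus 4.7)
The strategy is to reduce the anti-symplectic case to Proposition~\ref{prop:linwidth} by precomposing with a fixed linear anti-symplectomorphism that preserves every ball $B(r)$.

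First I would exhibit such a map: the linear involution $\sigma \colon (x^1,\dots,x^n,y^1,\dots,y^n) \mapsto (x^1,\dots,x^n,-y^1,\dots,-y^n)$ is an isometry of $\mathbb{R}^{2n}$, so $\sigma(B(r)) = B(r)$ for all $r > 0$, and it satisfies $\sigma^*\omega = -\omega$, so it is a linear anti-symplectomorphism.

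Next, I would establish the inequality $w_L(A) \leq w_L(\Psi(A))$. Given an affine symplectomorphism $\varphi$ with $\varphi(B(r)) \subseteq A$, consider $\Psi \circ \varphi \circ \sigma$. Pulling back $\omega$ successively gives $\sigma^* \varphi^* \Psi^* \omega = \sigma^*(-\omega) = \omega$, so this composition is an affine symplectomorphism. Because $\sigma(B(r)) = B(r)$, its image on $B(r)$ is $\Psi(\varphi(B(r))) \subseteq \Psi(A)$. Taking the supremum over all admissible $(r, \varphi)$ in the definition of $w_L(A)$ yields $w_L(A) \leq w_L(\Psi(A))$.

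Finally, since $\Psi^{-1}$ is also an affine anti-symplectomorphism, applying the same inequality with $\Psi^{-1}$ in place of $\Psi$ and $\Psi(A)$ in place of $A$ gives $w_L(\Psi(A)) \leq w_L(\Psi^{-1}(\Psi(A))) = w_L(A)$, completing the proof. There is no genuine obstacle here; the only point worth noting is that the composition of an anti-symplectomorphism with an anti-symplectomorphism is symplectic, which was already used implicitly in the remark following Theorem~\ref{thm:affns} and which is the reason such a $\sigma$ is useful.
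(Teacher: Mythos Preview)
Your proof is correct and follows essentially the same approach as the paper: both arguments introduce a fixed linear anti-symplectomorphism preserving every ball (you use $(x,y)\mapsto(x,-y)$ while the paper uses $(x,y)\mapsto(-x,y)$), compose to convert the given anti-symplectomorphism into a symplectomorphism, and then obtain the reverse inequality by applying the same reasoning to $\Psi^{-1}$.
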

\begin{proof}
 As above, it suffices to show that \(w_L(A) \leq w_L(\Psi(A))\). For this, we need to show that if \(B(r)\) is taken into \(A\) by an affine symplectomorphism \(\Phi\), then \(\pi r^2 \leq w_L(\Psi(A))\). Let \(T\) be the anti-symplectomorphism given by \((x, y) \mapsto (-x, y)\), so that \(T(B(r)) = B(r)\). It follows that \(\Psi \circ \Phi \circ T\) is an affine symplectomorphism taking \(B(r)\) into \(\Psi(A)\), and so \(\pi r^2 \leq w_L(\Psi(A))\).
\end{proof}

The linear symplectic width allows us to give a characterization of linear symplectomorphisms and anti-symplectomorphisms as those linear maps that preserve the linear symplectic width. Specifically, we do not need our map to preserve the width of \emph{every} subset of \(\mathbb{R}^{2n}\); we will prove that it suffices for it to preserve the width of (closed) ellipsoids centered at the origin. (In particular, as we show in the next section, the linear symplectic width of such an ellipsoid is easy to compute.)

\begin{theorem}[{\cite[Theorem 2.4.4]{mcduffsalamon}}] \label{thm:linchar}
 Let \(\Psi \colon \mathbb{R}^{2n} \to \mathbb{R}^{2n}\) be a linear map. The following are equivalent:
 \begin{enumerate}
 \item \(\Psi\) preserves the linear symplectic width of every ellipsoid centered at the origin.

 \item \(\Psi\) is symplectic or anti-symplectic.
 \end{enumerate}
\end{theorem}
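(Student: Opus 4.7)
The direction $(2) \Rightarrow (1)$ is immediate from Propositions~\ref{prop:linwidth} and~\ref{prop:antisymp}, which say that an affine symplectomorphism or anti-symplectomorphism preserves the linear symplectic width of every subset of $\mathbb{R}^{2n}$, in particular of every centered ellipsoid. So all the content is in $(1) \Rightarrow (2)$, and the plan there is to reduce to the affine rigidity theorem (Theorem~\ref{thm:affrigid}) by verifying its hypotheses: non-singularity of $\Psi$ together with the linear non-squeezing property for both $\Psi$ and $\Psi^{-1}$.

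First I would dispose of non-singularity. If $\Psi$ were singular, then $\Psi(B(1))$ would lie in a proper subspace of $\mathbb{R}^{2n}$; any affine symplectic ball of positive radius has non-empty interior in $\mathbb{R}^{2n}$ and so cannot embed into such a subspace, forcing $w_L(\Psi(B(1))) = 0$. This contradicts $w_L(B(1)) = \pi$ and the width-preservation hypothesis (since $B(1)$ is an ellipsoid centered at the origin). Once $\Psi$ is known to be invertible, $\Psi^{-1}$ also sends centered ellipsoids to centered ellipsoids, and applying hypothesis~(1) to $\Psi^{-1}(E)$ shows that $\Psi^{-1}$ likewise preserves the width of every centered ellipsoid.

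Next I would verify the linear non-squeezing property for $\Psi$ (the argument for $\Psi^{-1}$ is identical). Let $B = \Phi(B(r))$ be a linear symplectic ball of radius $r$, with $\Phi$ a linear symplectomorphism. Then $B$ is an ellipsoid centered at the origin, and Proposition~\ref{prop:linwidth} gives $w_L(B) = w_L(B(r)) = \pi r^2$. If $\Psi(B) \subseteq Z$ for some linear symplectic cylinder $Z$ of radius $R$, then hypothesis~(1) gives $w_L(\Psi(B)) = w_L(B) = \pi r^2$, while monotonicity together with Proposition~\ref{prop:linwidth} applied to $Z$ gives $w_L(\Psi(B)) \leq w_L(Z) = \pi R^2$. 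Combining these yields $r \leq R$, which is the linear non-squeezing property. Theorem~\ref{thm:affrigid} then concludes that $\Psi$ is symplectic or anti-symplectic.

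There is no real obstacle: the only subtle point is checking that a linear symplectic ball qualifies as an ``ellipsoid centered at the origin'' so that hypothesis~(1) directly applies to it, and that a singular linear map is ruled out before invoking Theorem~\ref{thm:affrigid}. Both are straightforward, and no information about ellipsoids beyond centered balls and their symplectic images is needed—so, pleasantly, the characterization uses strictly less than the full strength of~(1).
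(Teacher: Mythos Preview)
Your proof is correct and follows essentially the same approach as the paper's: both directions use Propositions~\ref{prop:linwidth} and~\ref{prop:antisymp} for $(2)\Rightarrow(1)$, and for $(1)\Rightarrow(2)$ both establish invertibility via the proper-subspace argument, transfer the width-preservation to $\Psi^{-1}$, verify the linear non-squeezing property for $\Psi$ and $\Psi^{-1}$ using monotonicity, and then invoke Theorem~\ref{thm:affrigid}. Your closing remark that only symplectic images of round balls are actually used is a valid observation not made explicitly in the paper.
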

\begin{proof}
 We first prove that 2 implies 1. If \(\Psi\) is symplectic, then Proposition~\ref{prop:linwidth} implies that it preserves the width of every ellipsoid centered at the origin, and if \(\Psi\) is anti-symplectic, then we are done by Proposition~\ref{prop:antisymp}.

 Let us now prove that 1 implies 2. We begin by proving that \(\Psi\) is invertible. If \(\Psi\) is not invertible, then \(\Psi(B(1))\) is contained in a proper linear subspace of \(\mathbb{R}^{2n}\). Thus, any ball which embeds into \(\Psi(B(1))\) must embed into a proper subspace of \(\mathbb{R}^{2n}\). This is impossible for a ball of positive radius, since the image of any such ball under an embedding must have a non-empty interior, so it follows that \(w_L(\Psi(B(1))) = 0\). But the normalization property implies that \(w_L(\Psi(B(1))) = w_L(B(1)) = \pi\), a contradiction.
 
 We now show that \(\Psi\) and \(\Psi^{-1}\) have the linear non-squeezing property. Let \(B\) be a linear symplectic ball of radius \(r\) and let \(Z\) be a linear symplectic cylinder of radius \(R\) such that \(\Psi(B) \subseteq Z\). Since \(B\) is an ellipsoid centered at the origin, we have that \(w_L(B) = w_L(\Psi(B))\) by assumption, and so
 \[
 \pi r^2 = w_L(B) = w_L(\Psi(B)) \leq w_L(Z) = \pi R^2.
 \]
 Note that \(\Psi^{-1}\) also preserves the linear symplectic width of every ellipsoid \(E\) centered at the origin, since
 \[
 w_L(\Psi^{-1}(E)) = w_L(\Psi(\Psi^{-1}(E))) = w_L(E).
 \]
 (The first equality follows from the fact that \(\Psi^{-1}(E)\) is an ellipsoid centered at the origin and the fact that \(\Psi\) preserves the width of all such ellipsoids.) The same argument as before then shows that \(\Psi^{-1}\) has the linear non-squeezing property. We conclude by the affine rigidity theorem (Theorem~\ref{thm:affrigid}) that \(\Psi\) must then be symplectic or anti-symplectic.
\end{proof}

Note that by the conformality property \(w_L(\lambda E) = \lambda^2 w_L(E)\), a linear map preserves the capacity of all ellipsoids centered at the origin if and only if it preserves the capacity of all sufficiently small ellipsoids centered at the origin. (We will use this in the proof of Theorem~\ref{thm:limit} below.)

\subsection{The symplectic width of an ellipsoid}
\label{section:2.3}

Theorem~\ref{thm:linchar} suggests that we should try to understand the linear symplectic widths of (closed) ellipsoids centered at the origin. We will restrict here to closed ellipsoids centered at the origin. Generally, we want to try to find linear symplectic invariants of ellipsoids. In fact, we can find a complete linear symplectic invariant of an ellipsoid known as its \emph{symplectic spectrum}. We begin with the following result from symplectic linear algebra, originally proved in~\cite{Will-1}. (See also~\cite{Will-2}.) 

\begin{theorem} \label{thm:williamson}
 (Williamson's theorem.) Let \(M\) be a real symmetric positive-definite \(2n \times 2n\) matrix. There exists \(S \in \mathrm{Sp}(2n, \mathbb{R})\) such that \(S^T M S = \Lambda \oplus \Lambda\), where \(\Lambda\) is a diagonal \(n \times n\) matrix with positive entries. Moreover, up to a reordering, the diagonal entries of \(\Lambda\) do not depend on \(S\).
\end{theorem}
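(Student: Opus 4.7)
The plan is to reduce Williamson's theorem to a spectral decomposition carried out inside the $M$-inner product $\langle\cdot,\cdot\rangle_M := \langle M\cdot,\cdot\rangle$. The key auxiliary object is the unique endomorphism $K$ with $\omega(u,v) = \langle Ku,v\rangle_M$, which explicitly equals $K = M^{-1}J$. Skew-symmetry of $\omega$ immediately yields $M$-antisymmetry of $K$, i.e.\ $\langle Ku,v\rangle_M = -\langle u, Kv\rangle_M$, and since $K$ is invertible the operator $-K^2$ is $M$-symmetric and $M$-positive definite, because $\langle -K^2 u, u\rangle_M = \|Ku\|_M^2$.

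I would first apply the $M$-spectral theorem to $-K^2$ to obtain positive eigenvalues $\lambda_1^2,\ldots,\lambda_r^2$ and pairwise $M$-orthogonal eigenspaces $V_1,\ldots,V_r$. Each $V_j$ is automatically $K$-invariant (since $K$ commutes with $K^2$), and distinct $V_j$ are also pairwise $\omega$-orthogonal: for $u \in V_j$ and $v \in V_{j'}$ with $j \neq j'$ we have $Ku \in V_j$, so $\omega(u,v) = \langle Ku, v\rangle_M = 0$ by $M$-orthogonality of distinct eigenspaces.

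Within each $V_j$ I would build a symplectic basis iteratively. Pick any $M$-unit $e \in V_j$ and set $f := \lambda_j^{-1}Ke$. Then $f \in V_j$, $\langle e,f\rangle_M = 0$ by $M$-antisymmetry, $\|f\|_M = 1$ using $\|Ke\|_M^2 = \langle -K^2 e,e\rangle_M = \lambda_j^2$, and $\omega(e,f) = \langle Ke,f\rangle_M = \lambda_j$. The $M$-orthogonal complement of $\mathrm{span}(e,f)$ inside $V_j$ is again $K$-invariant, so the construction iterates, in particular forcing $\dim V_j$ to be even. Collecting pairs across all $j$ produces $M$-orthonormal vectors $\{e_k, f_k\}_{k=1}^n$ with $\omega(e_k,f_\ell) = \lambda_k \delta_{k\ell}$ and $\omega(e_k,e_\ell) = \omega(f_k,f_\ell) = 0$, where each $\lambda_k$ equals one of the $\lambda_j$. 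Rescaling $e_k \mapsto \lambda_k^{-1/2}e_k$ and $f_k \mapsto \lambda_k^{-1/2}f_k$ produces a symplectic basis, and letting $S$ be the matrix whose columns are these rescaled vectors (in the order $e_1',\ldots,e_n',f_1',\ldots,f_n'$) gives $S \in \mathrm{Sp}(2n,\mathbb{R})$ and $S^T M S = \Lambda \oplus \Lambda$ with $\Lambda = \mathrm{diag}(\lambda_1^{-1},\ldots,\lambda_n^{-1})$.

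For uniqueness I would use that $S^T J S = J$ rewrites as $S^{-T}J = JS$, so $(S^T M S)^{-1}J = S^{-1}(M^{-1}J)S$. Hence the spectrum of $K = M^{-1}J$ is unchanged under symplectic congruence of $M$; inspecting the normal form shows these eigenvalues are $\pm i\lambda_k$, each with multiplicity two, so the $\lambda_k^{-1}$ are determined by $M$ up to reordering. I expect the main obstacle to be the bookkeeping in the iterative construction — one must simultaneously track $K$-invariance, $M$-orthogonality, and $\omega$-orthogonality, together with the rescaling that turns an $M$-orthonormal basis into a symplectic one — but each individual verification is a one-line consequence of the $M$-antisymmetry of $K$.
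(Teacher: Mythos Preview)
Your argument is correct, but it takes a different route from the paper. The paper conjugates by $M^{-1/2}$: it observes that $M^{-1/2}JM^{-1/2}$ is antisymmetric and invertible in the \emph{standard} inner product, invokes the orthogonal normal form for such matrices to find $Q \in \mathrm{O}(2n)$ with $QM^{-1/2}JM^{-1/2}Q^T = J_2 \otimes D$, and then reads off $S = M^{-1/2}Q^T(I_2 \otimes D)^{-1/2}$ directly. Your approach instead works intrinsically in the $M$-inner product with $K = M^{-1}J$, diagonalises $-K^2$ there, and builds the symplectic basis by hand inside each eigenspace. The paper's route is shorter because it outsources the basis construction to the known antisymmetric normal form; yours is more self-contained and makes the geometric role of $\omega$ and the $M$-metric explicit. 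The uniqueness arguments are essentially identical: both reduce to the observation that $(S^TMS)^{-1}J = S^{-1}(M^{-1}J)S$ when $S$ is symplectic, so the spectrum of $M^{-1}J$ (equivalently of $J(\Lambda \oplus \Lambda)$) is the invariant. One small slip: your claim that the eigenvalues $\pm i\lambda_k$ each have ``multiplicity two'' is not right in general --- when the $\lambda_k$ are distinct each $i\lambda_k$ has multiplicity one --- but this does not affect the conclusion that the multiset $\{\lambda_k^{-1}\}$ is determined by $M$.
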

\begin{proof}
 The matrix \(M^{-\frac{1}{2}} J M^{-\frac{1}{2}}\) is anti-symmetric since \(M^{-\frac{1}{2}}\) is symmetric and \(J\) is anti-symmetric, and it is invertible since \(M^{-\frac{1}{2}}\) and \(J\) are, so there exists a matrix \(Q \in \mathrm{O}(2n)\) such that
 \[
 Q M^{-\frac{1}{2}} J M^{-\frac{1}{2}} Q^T = J_2 \otimes D, \qquad J_2 = \begin{pmatrix} 0 & -1 \\ 1 & 0 \end{pmatrix},
 \]
 where \(\otimes\) is the Kronecker product and \(D\) is an \(n \times n\) diagonal matrix with positive entries. Note that \(J = J_2 \otimes I_n\), so that in particular
 \[
 (I_2 \otimes D^{-\frac{1}{2}})(Q M^{-\frac{1}{2}} J M^{-\frac{1}{2}} Q^T)(I_2 \otimes D^{-\frac{1}{2}}) = J.
 \]
 Let \(\tilde{D} = I_2 \otimes D\), so that
 \[
 (M^{-\frac{1}{2}} Q^T \tilde{D}^{-\frac{1}{2}})^T J (M^{-\frac{1}{2}} Q^T \tilde{D}^{-\frac{1}{2}}) = J.
 \]
 This means that \(S = M^{-\frac{1}{2}} Q^T \tilde{D}^{-\frac{1}{2}}\) is a symplectic matrix. Also,
 \[
 S^TMS = \tilde{D}^{-\frac{1}{2}} Q M^{-\frac{1}{2}} M M^{-\frac{1}{2}} Q^T \tilde{D}^{-\frac{1}{2}} = \tilde{D}^{-1} = \begin{pmatrix} D^{-1} & 0 \\ 0 & D^{-1} \end{pmatrix},
 \]
 which proves existence. For uniqueness, suppose that \(S^T(\Lambda \oplus \Lambda)S = \Lambda' \oplus \Lambda'\) for some symplectic \(S\). Since \(SJS^T = J\), this identity is equivalent to \(S^{-1} J(\Lambda \oplus \Lambda) S = J(\Lambda' \oplus \Lambda')\). So, \(J(\Lambda \oplus \Lambda)\) and \(J(\Lambda' \oplus \Lambda')\) have the same eigenvalues. The eigenvalues of \(J(\Lambda \oplus \Lambda)\) are \(\pm i \Lambda_{jj}\) and likewise for \(J(\Lambda' \oplus \Lambda')\), so \(\Lambda\) and \(\Lambda'\) agree up to a permutation of the diagonal.
\end{proof}

Now, given an $n$-tuple \(r = (r_1, \dots, r_n)\) with \(0 < r_1 \leq \cdots \leq r_n\), we define
\[
 E(r) := \left\{ (x, y) \in \mathbb{R}^{2n} : \sum_{j=1}^n \frac{x_j^2 + y_j^2}{r_j^2} \leq 1 \right\}.
\]
We will now use Williamson's theorem to prove that every closed ellipsoid centered at the origin is linearly symplectomorphic to one of the form \(E(r)\) for a unique such \(r\).

\begin{proposition}[{\cite[Lemma 2.4.6]{mcduffsalamon}}] \label{prop:normalform}
 Given any closed ellipsoid \(E\) centered at the origin, there exists \(S \in \mathrm{Sp}(2n, \mathbb{R})\) and a unique $n$-tuple \(r = (r_1, \dots, r_n)\) with \(0 < r_1 \leq \cdots \leq r_n\) such that \(E = S(E(r))\).
\end{proposition}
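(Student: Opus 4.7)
The plan is to reduce the statement to Williamson's theorem by encoding the ellipsoid $E$ as a real symmetric positive-definite matrix. Since $E$ is a closed ellipsoid centered at the origin, we can write
\[
 E = \{ z \in \mathbb{R}^{2n} : \langle M z, z \rangle \leq 1 \}
\]
for a unique symmetric positive-definite $2n \times 2n$ matrix $M$. Applying Williamson's theorem (Theorem~\ref{thm:williamson}) to $M$ produces some $S_0 \in \mathrm{Sp}(2n, \mathbb{R})$ with $S_0^T M S_0 = \Lambda \oplus \Lambda$, where $\Lambda = \mathrm{diag}(\lambda_1, \dots, \lambda_n)$ has positive entries. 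Because the permutation matrix which simultaneously permutes $(e_1, \dots, e_n)$ and $(f_1, \dots, f_n)$ by the same $\sigma \in S_n$ is symplectic (it preserves each pairing $\omega(e_j, f_j) = 1$), we may compose $S_0$ with such a permutation to arrange $\lambda_1 \geq \cdots \geq \lambda_n > 0$.

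Next, I would unpack the resulting normal form. Substituting $z = S_0 w$ into the defining inequality yields $\langle (S_0^T M S_0) w, w \rangle = \langle (\Lambda \oplus \Lambda) w, w \rangle \leq 1$, i.e.,
\[
 E = S_0 \bigl\{(x,y) \in \mathbb{R}^{2n} : \textstyle\sum_{j=1}^n \lambda_j (x_j^2 + y_j^2) \leq 1 \bigr\}.
\]
Setting $r_j := \lambda_j^{-1/2}$ converts the set on the right into $E(r)$, and the ordering $\lambda_1 \geq \cdots \geq \lambda_n$ becomes the required $0 < r_1 \leq \cdots \leq r_n$. This establishes existence with $S = S_0$.

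For uniqueness, suppose $E = S_1(E(r)) = S_2(E(r'))$ for tuples in increasing order. The defining matrix of $E(r)$ is $\Lambda_r \oplus \Lambda_r$ with $\Lambda_r = \mathrm{diag}(r_j^{-2})$ (and similarly for $r'$), and $T := S_1^{-1} S_2 \in \mathrm{Sp}(2n, \mathbb{R})$ satisfies $T(E(r')) = E(r)$. Translating this to defining matrices gives $T^T (\Lambda_r \oplus \Lambda_r) T = \Lambda_{r'} \oplus \Lambda_{r'}$, and the uniqueness clause of Williamson's theorem forces $\Lambda_r$ and $\Lambda_{r'}$ to agree up to a permutation of the diagonal; combined with the ordering assumption, this gives $\Lambda_r = \Lambda_{r'}$, hence $r = r'$. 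The proof is essentially a dictionary between ellipsoids and symmetric positive-definite matrices, so there is no real obstacle — the only point requiring a little care is the observation that simultaneous permutations of $(e_j)$ and $(f_j)$ are symplectic, which is what lets us upgrade Williamson's ``uniqueness up to reordering'' to genuine uniqueness under a fixed ordering convention.
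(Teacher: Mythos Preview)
Your proof is correct and follows essentially the same approach as the paper: encode the ellipsoid by its symmetric positive-definite matrix, apply Williamson's theorem, order the diagonal entries, and read off $r_j$ from $\Lambda$. Your treatment is slightly more explicit than the paper's in two places --- you spell out that simultaneous permutations of the $(e_j)$ and $(f_j)$ are symplectic (justifying the ordering step), and you give a full argument for uniqueness rather than simply invoking the uniqueness clause of Williamson --- but the underlying argument is the same.
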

\begin{proof}
 We can find a real symmetric positive-definite \(2n \times 2n\) matrix \(M\) such that
 \[
 E = \{z \in \mathbb{R}^{2n} : g(z, Mz) \leq 1\}.
 \]
 By Theorem~\ref{thm:williamson}, we can find an \(S\) in \(\mathrm{Sp}(2n)\) and a diagonal \(n \times n\) matrix \(\Lambda\) such that \(S^TMS = \Lambda \oplus \Lambda\). We may assume that the diagonal entries of \(\Lambda\) are \emph{decreasing}, which in particular uniquely determines \(\Lambda\). If \(z = (x, y) \in \mathbb{R}^{2n}\), then
 \[
 g(Sz, MSz) = g(z, (\Lambda \oplus \Lambda)z) = \sum_{j=1}^n \lambda_j^2(x_j^2 + y_j^2) = \sum_{j=1}^n \frac{x_j^2 + y_j^2}{(1/\lambda_j)^2}.
 \]
 So, if we define \(r_j = 1/\lambda_j\), then \(0 < r_1 \leq \cdots \leq r_n\) by the assumption that \(\lambda_1 \geq \cdots \geq \lambda_n\), and it follows that \(S^{-1} E = E(r)\) for \(r = (r_1, \dots, r_n)\).
\end{proof}

The $n$-tuple \(r\) associated to \(E\) by Proposition~\ref{prop:normalform} is called the \emph{symplectic spectrum} of \(E\). An immediate consequence of this, as promised at the start of this section, is that two closed ellipsoids are linearly symplectomorphic \emph{if and only if} they have the same symplectic spectrum. We conclude this section by computing the linear symplectic width of an ellipsoid in terms of its symplectic spectrum.

\begin{theorem}[{\cite[Theorem 2.4.8]{mcduffsalamon}}] \label{thm:spec}
 Let \(E\) be a closed ellipsoid centered at the origin in \(\mathbb{R}^{2n}\). If \(r = (r_1, \dots, r_n)\) is the symplectic spectrum of \(E\), then the linear symplectic width of \(E\) is given by
 \[
 w_L(E) = \pi r_1^2.
 \]
\end{theorem}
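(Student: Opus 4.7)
The plan is to reduce to a normal-form ellipsoid and sandwich its linear symplectic width between $\pi r_1^2$ and $\pi r_1^2$ using the monotonicity and non-triviality properties of $w_L$.

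First I would invoke Proposition~\ref{prop:normalform} to write $E = S(E(r))$ for some $S \in \mathrm{Sp}(2n,\mathbb{R})$. Since $w_L$ is invariant under linear symplectomorphisms by Proposition~\ref{prop:linwidth}, it suffices to prove $w_L(E(r)) = \pi r_1^2$. For the lower bound, I would observe that the standard ball $B(r_1)$ sits inside $E(r)$: indeed, if $z = (x,y)$ satisfies $\sum_{j=1}^n (x_j^2+y_j^2) \leq r_1^2$, then since $r_1 \leq r_j$ for every $j$,
\[
\sum_{j=1}^n \frac{x_j^2+y_j^2}{r_j^2} \;\leq\; \sum_{j=1}^n \frac{x_j^2+y_j^2}{r_1^2} \;\leq\; 1.
\]
Hence the identity map is a linear symplectomorphism realizing $B(r_1) \subseteq E(r)$, so $w_L(E(r)) \geq \pi r_1^2$ by definition.

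For the upper bound, I would note that $E(r)$ is contained in the standard symplectic cylinder $Z(r_1)$: any $(x,y) \in E(r)$ satisfies $(x_1^2+y_1^2)/r_1^2 \leq \sum_{j=1}^n (x_j^2+y_j^2)/r_j^2 \leq 1$, so $x_1^2+y_1^2 \leq r_1^2$. By monotonicity of $w_L$ and the non-triviality property in~\eqref{eq:2.1},
\[
w_L(E(r)) \;\leq\; w_L(Z(r_1)) \;=\; \pi r_1^2.
\]
Combining the two bounds gives $w_L(E(r)) = \pi r_1^2$, and hence $w_L(E) = \pi r_1^2$ as claimed.

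There is essentially no obstacle here: the content lives entirely in the non-triviality property $w_L(Z(r)) = \pi r^2$, whose nontrivial direction is the affine non-squeezing theorem (Theorem~\ref{thm:affns}). The role of the symplectic spectrum is just to pick out the smallest semi-axis $r_1$, which simultaneously governs the largest inscribed standard ball and the smallest circumscribed standard symplectic cylinder.
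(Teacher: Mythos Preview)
Your proof is correct and follows essentially the same approach as the paper: reduce to the normal form $E(r)$ via Proposition~\ref{prop:normalform} and Proposition~\ref{prop:linwidth}, then use the inclusions $B(r_1) \subseteq E(r) \subseteq Z(r_1)$ together with monotonicity and non-triviality of $w_L$ to sandwich $w_L(E(r))$ between $\pi r_1^2$ and $\pi r_1^2$. The only difference is that you spell out the verifications of the two inclusions explicitly, which the paper leaves implicit.
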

\begin{proof}
 Since linear symplectomorphisms preserve the linear symplectic width, and since \(E\) is linearly symplectomorphic to \(E(r)\), it suffices to show that \(w_L(E(r)) = \pi r_1^2\). Since
 \[
 B(r_1) \subseteq E(r) \subseteq Z(r_1),
 \]
 it follows from the monotonicity property of \(w_L\) that
 \[
 \pi r_1^2 = w_L(B(r_1)) \leq w_L(E(r)) \leq w_L(Z(r_1)) = \pi r_1^2. \qedhere
 \]
\end{proof}

\subsection{Non-linear symplectic capacities and rigidity}
\label{section:2.4}

We now discuss generalizing the content of Section~\ref{section:2.3} to the non-linear case. In particular, we show that Theorem~\ref{thm:linchar} admits a generalization to a theorem which characterizes \emph{non-linear} symplectomorphisms and anti-symplectomorphisms.

To generalize Theorem~\ref{thm:linchar}, we need to generalize the linear symplectic width. We define a \emph{symplectic capacity on \(\mathbb{R}^{2n}\)} to be a map
\[
 c \colon \{\text{non-empty subsets of } \mathbb{R}^{2n}\} \to [0, \infty]
\]
satisfying the following three non-linear analogues of the properties of \(w_L\):
\begin{itemize}
 \item \emph{Monotonicity}: Let $A, B$ be non-empty subsets of $\mathbb{R}^{2n}$. If there exists a symplectic embedding \(\psi \colon A \hookrightarrow \mathbb{R}^{2n}\) such that \(\psi(A) \subseteq B\), then \(c(A) \leq c(B)\).

 \item \emph{Conformality}: \(c(\lambda A) = \lambda^2 c(A)\).

 \item \emph{Non-triviality}: \(c(B(1)) > 0\) and \(c(Z(1)) < \infty\).
\end{itemize}
(A symplectic embedding \(A \hookrightarrow \mathbb{R}^{2n}\) is one which extends to a symplectic embedding of an open neighbourhood of \(A\).) We note that the non-triviality axiom for a symplectic capacity above is slightly weaker than the non-triviality property for the linear symplectic width. We say that a symplectic capacity \(c\) is \emph{normalized} if \(c(B(1)) = c(Z(1)) = \pi\). It follows from the conformality axiom that \(c(B(r)) = \pi r^2\) and \(c(Z(R)) = \pi R^2\) for any normalized capacity \(c\).

It is not clear from the definition that symplectic capacities exist, let alone normalized ones. Following the linear case, we define the \emph{symplectic width} (or \emph{Gromov width}) of a non-empty subset \(A\) of \(\mathbb{R}^{2n}\) to be
\[
 w(A) := \sup\{ \pi r^2 : \psi(B(r)) \subseteq A \text{ for some symplectic embedding } \psi \colon B(r) \to \mathbb{R}^{2n} \}.
\]
Just as in the linear case, it is immediate from the definition that \(w\) satisfies the monotonicity and conformality properties. The same argument (see~\eqref{eq:2.1}) that proved that the linear symplectic width had the non-triviality property proves that \(w\) has the non-triviality property and is normalized, but the application of the affine non-squeezing theorem must be replaced by an application of Gromov's non-squeezing theorem (Theorem~\ref{thm:gns}).

The converse is true as well: if a normalized capacity on \(\mathbb{R}^{2n}\) exists, then Gromov's non-squeezing theorem holds. Indeed, if \(c\) is any normalized capacity, and if the ball \(B(r)\) embeds symplectically into the cylinder \(Z(R)\), then
\[
 \pi r^2 = c(B(1))r^2 = c(B(r)) \leq c(Z(R)) = c(Z(1))R^2 = \pi R^2.
\]
In~\cite{ekelandhofer}, Ekeland and Hofer introduced the general notion of a symplectic capacity on \(\mathbb{R}^{2n}\) and gave a proof of Gromov's non-squeezing theorem by constructing a normalized capacity on \(\mathbb{R}^{2n}\). A variant of the Gromov width in particular featured in Gromov's original paper~\cite{gromov} as the \emph{radius}, defined for a symplectic manifold \((M, \omega)\) as the supremum of the radii of balls which embed symplectically into \((M, \omega)\). (One can also define capacities on symplectic manifolds as opposed to subsets of \(\mathbb{R}^{2n}\), but we will focus only on capacities on \(\mathbb{R}^{2n}\).)

As another approach, rather than measuring the size of a subset of \(\mathbb{R}^{2n}\) by embedding progressively larger balls into it, we can try to measure the size of a subset by embedding it into progressively smaller cylinders. That is, we can define
\[
 \overline{w}(A) := \inf \{\pi R^2 : \psi(A) \subseteq Z(R) \text{ for some symplectic embedding } \psi \colon A \to \mathbb{R}^{2n}\}.
\]
The proof that \(\overline{w}\) is a normalized capacity is nearly identical to the proof that \(w\) is. Moreover, if \(c\) is any normalized capacity on \(\mathbb{R}^{2n}\), then we have that \(w \leq c \leq \overline{w}\). Indeed, if \(B(r) \hookrightarrow A\) and \(A \hookrightarrow Z(R)\) for some \(r, R > 0\) and some subset \(A\) of \(\mathbb{R}^{2n}\), then
\[
 \pi r^2 = c(B(r)) \leq c(A) \leq c(Z(R)) = \pi R^2.
\]
Thus \(w\) and \(\overline{w}\) are the ``extreme'' normalized capacities. In particular, if \(w(A) = \overline{w}(A)\) for some subset \(A\) of \(\mathbb{R}^{2n}\), then every normalized capacity attains the same value on \(A\). In fact, we have already seen a large class of subsets of \(\mathbb{R}^{2n}\) for which this is the case.

\begin{theorem}[{\cite[Example 12.1.7]{mcduffsalamon}}] \label{thm:ellcap}
 Let \(c\) be any normalized capacity on \(\mathbb{R}^{2n}\). If \(E\) is any closed ellipsoid in \(\mathbb{R}^{2n}\), then \(c(E)\) is equal to the linear symplectic width of \(E\). (In particular, every normalized capacity takes the same value on any given ellipsoid.)
\end{theorem}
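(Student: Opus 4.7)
The plan is to reduce to the normal form $E(r)$ of Proposition~\ref{prop:normalform} and then sandwich $c(E(r))$ between $c(B(r_1))$ and $c(Z(r_1))$, which are both equal to $\pi r_1^2$ by conformality and normalization. This is essentially the same chain of inequalities as in~\eqref{eq:2.1}, but with $c$ in place of $w_L$, so the only nontrivial input is the reduction to normal form.

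First, since translation is an affine symplectomorphism whose inverse is also one, both $c$ and $w_L$ are invariant under translation (for $c$, by applying monotonicity in both directions to the translation and its inverse, which are symplectic embeddings of all of $\mathbb{R}^{2n}$). Hence we may assume that $E$ is centered at the origin. Proposition~\ref{prop:normalform} then furnishes a linear symplectomorphism $S$ with $E = S(E(r))$ for a unique tuple $r = (r_1, \dots, r_n)$ satisfying $0 < r_1 \leq \cdots \leq r_n$. Applying monotonicity of $c$ to $S$ and $S^{-1}$ gives $c(E) = c(E(r))$, and Proposition~\ref{prop:linwidth} together with Theorem~\ref{thm:spec} gives $w_L(E) = w_L(E(r)) = \pi r_1^2$. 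So it suffices to prove $c(E(r)) = \pi r_1^2$.

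For this, I would simply observe the inclusions $B(r_1) \subseteq E(r) \subseteq Z(r_1)$, which follow immediately from the defining inequalities together with $r_1 \leq r_j$ for all $j$. Monotonicity of $c$ applied to the identity inclusions then yields
\[
 \pi r_1^2 \,=\, c(B(r_1)) \,\leq\, c(E(r)) \,\leq\, c(Z(r_1)) \,=\, \pi r_1^2,
\]
where the outer equalities come from conformality and the hypothesis that $c$ is normalized. This forces $c(E(r)) = \pi r_1^2 = w_L(E)$, completing the proof.

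There is no real obstacle: the argument is a direct rerun of~\eqref{eq:2.1} once one has the normal form. The only mild subtlety to bear in mind is the technical point in the definition of a symplectic capacity that monotonicity is witnessed by symplectic embeddings extending to open neighborhoods, but for the identity inclusions $B(r_1) \hookrightarrow E(r) \hookrightarrow Z(r_1)$ and for the linear symplectomorphism $S$ this is automatic because all of these maps are defined globally on $\mathbb{R}^{2n}$.
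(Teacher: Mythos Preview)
Your proof is correct and follows essentially the same approach as the paper: translate to the origin, reduce to the normal form $E(r)$ via Proposition~\ref{prop:normalform}, and then sandwich $c(E(r))$ between $c(B(r_1))$ and $c(Z(r_1))$ using monotonicity and normalization. The paper's proof is simply the one-sentence remark that the argument for Theorem~\ref{thm:spec} goes through verbatim with $c$ in place of $w_L$; you have written out those details explicitly.
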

\begin{proof}
 After a translation to the origin, the exact same proof as that for Theorem~\ref{thm:spec} goes through with \(c\) in place of \(w_L\), since the monotonicity property for \(c\) implies that any linear symplectomorphism taking \(E\) to \(E(r)\) will also preserve \(c\).
\end{proof}

As a second application of the existence of symplectic capacities, we prove the following theorem due to Ekeland and Hofer. We then deduce a non-linear analogue of Theorem~\ref{thm:linchar}. In what follows, we fix a normalized capacity \(c\) on \(\mathbb{R}^{2n}\). 

\begin{theorem}[{\cite[Theorem 5]{ekelandhofer}}] \label{thm:limit}
  Let \((\varphi_k)\) be a sequence of continuous maps \(B(\rho) \to \mathbb{R}^{2n}\) converging uniformly to \(\varphi \colon B(\rho) \to \mathbb{R}^{2n}\). Suppose that each \(\varphi_k\) preserves the capacity of every ellipsoid centered at the origin in \(B(\rho)\). If \(\varphi\) is differentiable at \(0\), then \(\varphi'(0)\) is symplectic or anti-symplectic.
\end{theorem}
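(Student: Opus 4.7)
The plan is to reduce to the linear characterization in Theorem~\ref{thm:linchar} by producing a sequence of continuous maps converging uniformly to \(\Psi := \varphi'(0)\), each preserving the capacity of small ellipsoids centred at the origin. For \(x \in B(1)\) and small \(t > 0\), set
\[
\psi_{k,t}(x) := \frac{\varphi_k(tx) - \varphi_k(0)}{t}, \qquad \psi_t(x) := \frac{\varphi(tx) - \varphi(0)}{t}.
\]
Translations by \(-\varphi_k(0)\) are symplectic and hence preserve \(c\), and by the conformality axiom each \(\psi_{k,t}\) still preserves the capacity of every ellipsoid centred at the origin in \(B(1)\). The uniform convergence \(\varphi_k \to \varphi\) gives \(\psi_{k,t} \to \psi_t\) uniformly on \(B(1)\) as \(k \to \infty\) for each fixed \(t\), while differentiability of \(\varphi\) at \(0\) gives \(\psi_t \to \Psi\) uniformly on \(B(1)\) as \(t \to 0\). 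A diagonal choice produces \(\Phi_j := \psi_{k_j, t_j} \to \Psi\) uniformly on \(B(1)\), with each \(\Phi_j\) still preserving the capacity of every ellipsoid centred at the origin in \(B(1)\).

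The first real step is to show that \(\Psi\) is invertible. If not, then \(\Psi(B(1))\) lies in some hyperplane \(H\) through the origin, and uniform convergence gives \(\Phi_j(B(1)) \subseteq \Psi(B(1)) + B(\eta_j)\) with \(\eta_j \to 0\). Every hyperplane through the origin is coisotropic (its one-dimensional symplectic complement must lie inside it, else \(\omega\) would be nondegenerate on an odd-dimensional space), so there is a linear symplectomorphism \(S\) sending \(H\) to \(\{y_n = 0\}\); in the new coordinates \(S(\Phi_j(B(1)))\) lies in a bounded slab of thickness \(O(\eta_j)\) around \(\{y_n = 0\}\), and hence inside an ellipsoid whose smallest symplectic radius is \(O(\sqrt{\eta_j})\). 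By Theorem~\ref{thm:spec} and Theorem~\ref{thm:ellcap} the capacity of this ellipsoid tends to zero, so monotonicity forces \(c(\Phi_j(B(1))) \to 0\), contradicting \(c(\Phi_j(B(1))) = c(B(1)) = \pi\).

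With \(\Psi\) invertible, fix an ellipsoid \(E\) centred at \(0\) and contained in \(B(1)\); then \(\Psi(E)\) is itself an ellipsoid, and I would establish the sandwich
\[
(1 - \delta)\,\Psi(E) \subseteq \Phi_j(E) \subseteq (1 + \delta)\,\Psi(E)
\]
for every \(\delta > 0\) and all sufficiently large \(j\). The outer inclusion follows from \(\|\Phi_j - \Psi\|_{C^0(E)} \to 0\) together with the convex-body estimate \(\Psi(E) + B(\epsilon) \subseteq (1 + \epsilon/a)\,\Psi(E)\), where \(a > 0\) is the smallest Euclidean semi-axis of \(\Psi(E)\). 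For the inner inclusion, given \(y \in (1-\delta)\Psi(E)\) set \(y' := \Psi^{-1}(y) \in (1-\delta)E\); the map \(F_j(x) := y' + x - \Psi^{-1}\Phi_j(x)\) is a continuous self-map of \(E\) once \(j\) is large enough that \(\|\Psi^{-1}\|\,\eta_j\) is dominated by the smallest semi-axis of \(\delta E\) (so that the perturbation fits inside \(\delta E\) and \((1-\delta)E + \delta E \subseteq E\) by convexity), and Brouwer's theorem then produces a fixed point \(x\) with \(\Phi_j(x) = y\). Feeding the sandwich into monotonicity and conformality yields
\[
(1-\delta)^2 c(\Psi(E)) \leq c(\Phi_j(E)) = c(E) \leq (1+\delta)^2 c(\Psi(E)),
\]
and letting \(\delta \to 0\) gives \(c(\Psi(E)) = c(E)\). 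Since this holds for every sufficiently small ellipsoid centred at the origin, the remark after Theorem~\ref{thm:linchar} together with Theorem~\ref{thm:ellcap} shows that \(\Psi\) preserves \(w_L\) on every ellipsoid centred at the origin, so Theorem~\ref{thm:linchar} concludes that \(\Psi\) is symplectic or anti-symplectic.

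The main obstacle is the inner inclusion \((1-\delta)\Psi(E) \subseteq \Phi_j(E)\): because \(\Phi_j\) is only assumed to be continuous (not injective, not open, not symplectic), Hausdorff closeness of \(\Phi_j(E)\) to \(\Psi(E)\) does not by itself force \(\Phi_j(E)\) to contain a near-copy of \(\Psi(E)\). The invertibility of \(\Psi\), obtained in the previous step, is precisely what allows this to be recast as a fixed-point problem on \(E\) to which Brouwer's theorem applies.
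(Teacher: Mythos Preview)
Your proof is correct and follows a genuinely different route from the paper's. The paper works throughout with the auxiliary maximal capacity \(\overline{w}\) and proves the two inequalities \(\overline{w}(E) \leq \overline{w}(\varphi'(0)E)\) and \(\overline{w}(\varphi'(0)E) \leq \overline{w}(E)\) separately: the first by embedding \(\varphi(E)\) into a slightly larger cylinder via uniform convergence and then rescaling, the second by a Brouwer \emph{degree} argument (a homotopy on \(\partial E\) between \(\varphi_k((1+\gamma)\tau\,\cdot)\) and \(\varphi'(0)((1+\gamma)\tau\,\cdot)\) is shown to avoid a given target point, forcing \(\varphi'(0)(\tau E) \subset \varphi_k((1+\gamma)\tau E)\)). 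You instead pass at once to a diagonal sequence \(\Phi_j \to \Psi\) of capacity-preserving rescalings and work directly with the given normalized capacity \(c\), obtaining the symmetric sandwich \((1-\delta)\Psi(E) \subseteq \Phi_j(E) \subseteq (1+\delta)\Psi(E)\); your inner inclusion uses only Brouwer's \emph{fixed point} theorem applied to \(x \mapsto y' + \Psi^{-1}(\Psi x - \Phi_j(x))\) on \(E\), which is more elementary than the degree computation. The paper's route buys a one-line invertibility step (once \(\overline{w}(E)\le\overline{w}(\Psi E)\) is known, a compact subset of a proper subspace has \(\overline{w}=0\)), whereas your coisotropic-hyperplane argument is more hands-on but self-contained and avoids introducing \(\overline{w}\) at all. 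Both arguments feed into Theorem~\ref{thm:linchar} (via Theorem~\ref{thm:ellcap}) in the same way.
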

\begin{proof}
 We may assume that \(\varphi(0) = 0\). We will make use of the maximal capacity \(\overline{w}\) defined above. Let \(E \subset B(\rho)\) be a fixed ellipsoid centered at the origin. Given \(\varepsilon > 0\), the definition of \(\overline{w}(\varphi(E))\) ensures that there exists an \(r\) and a symplectic embedding \(\psi \colon \varphi(E) \to \mathbb{R}^{2n}\) such that
 \[
 \psi(\varphi(E)) \subseteq Z(r) \qquad \text{and} \qquad \overline{w}(\varphi(E)) \leq \pi r^2 < \overline{w}(\varphi(E)) + \varepsilon
 \]
 Choose an ellipsoid \(\tilde{E}\) (perhaps not centered at the origin) such that \(\psi(\varphi(E)) \subset \mathrm{int}(\tilde{E})\) and \(\pi r^2 \leq \overline{w}(\tilde{E}) \leq \overline{w}(\varphi(E)) + \varepsilon\). By uniform convergence, it follows that \(\psi(\varphi_k(E)) \subset \tilde{E}\) for all large enough \(k\), and so
 \[
 \overline{w}(E) = c(E) = c(\psi(\varphi_k(E))) \leq \overline{w}(\psi(\varphi_k(E))) \leq \overline{w}(\tilde{E}) \leq \overline{w}(\varphi(E)) + \varepsilon.
 \]
 Since \(\varepsilon > 0\) was arbitrary, we see that \(\overline{w}(E) \leq \overline{w}(\varphi(E))\) for every ellipsoid \(E \subset B(\rho)\). So, for \(t \in (0, 1)\), it follows from this and the conformality property that
 \[
 \overline{w}(E) = \frac{1}{t^2}\overline{w}(tE) \leq \frac{1}{t^2} \overline{w}(\varphi(tE)) = \overline{w}\left(\frac{1}{t}\varphi(tE)\right).
 \]
 Since \(t^{-1}\varphi(t \cdot)\) converges uniformly to \(\varphi'(0)\) as \(t \downarrow 0\), we conclude by the same argument (with \(t^{-1}\varphi(t\cdot)\) and \(\varphi'(0)\) in place of \(\varphi_k\) and \(\varphi\), respectively) that 
 \[
 \lim_{t\downarrow 0} \overline{w}\left(\frac{1}{t} \varphi(tE)\right) \leq \overline{w}(\varphi'(0)E).
 \]
 As a result,
 \begin{equation}
 \label{eq:2.2}
 \overline{w}(E) \leq \overline{w}(\varphi'(0)E)
 \end{equation}
 for every ellipsoid \(E \subset B(\rho)\). It follows from~\eqref{eq:2.2} as in the proof of Theorem~\ref{thm:linchar} that \(\varphi'(0)\) is invertible. In particular, there exists a \(d > 0\) such that
 \[
 |\varphi'(0)x| \geq d|x|, \qquad x \in \mathbb{R}^{2n}.
 \]
 We now show the reversed inequality in~\eqref{eq:2.2}. Since \(\varphi\) is differentiable at \(0\), we have that
 \[
 |\varphi(x) - \varphi'(0)x| \leq \varepsilon(|x|)|x|, \qquad x \in B(\rho),
 \]
 where \(\varepsilon \colon [0, \rho] \to [0, \infty)\) is defined by \(\varepsilon(0) = 0\) and
 \[
 \varepsilon(r) = \sup\left\{\frac{|\varphi(x) - \varphi'(0)x|}{|x|} : 0 < |x| \leq r\right\}, \qquad r \in (0, \rho].
 \]
 (Note that \(\varepsilon\) is increasing and continuous.) It follows that for every \(\delta > 0\), there is a \(k(\delta)\) such that for \(k \geq k(\delta)\), 
 \begin{equation}
 \label{eq:2.3}
 |\varphi_k(x) - \varphi'(0)x| \leq \varepsilon(|x|)|x| + \delta, \qquad x \in B(\rho).
 \end{equation}
 Fix \(\gamma > 0\). We now make the following claim:
 
 \emph{There exists a \(\tau \in (0, (1 + \gamma)^{-1})\) and an integer \(k\) such that for every \(z\) in \(E\), the equation}
 \begin{equation}
 \label{eq:2.4}
 t \varphi_k((1 + \gamma)\tau x) + (1 - t) \varphi'(0)((1 + \gamma)\tau x) = \varphi'(0)(\tau z)
 \end{equation}
 \emph{has no solutions \((x, t) \in \partial E \times [0, 1]\).}

 We will prove this claim by contradiction. Suppose that for every \(\tau\) and \(k\), there are \(z\), \(x\), and \(t\) as above such that~\eqref{eq:2.4} holds. We first rewrite~\eqref{eq:2.4} as
 \[
 \varphi'(0)(\tau z - (1 + \gamma)\tau x) = t\left[\varphi_k((1+\gamma)\tau x) - \varphi'(0)((1+\gamma)\tau x)\right].
 \]
 It follows that
 \begin{align*}
 |\varphi_k((1+\gamma)\tau x) - \varphi'(0)((1 + \gamma)\tau x)| &\geq |\varphi'(0)(\tau z - (1 + \gamma)\tau x)| \\
 &\geq \tau d |z - (1 + \gamma)x| \\
 &\geq \tau d C
 \end{align*}
 where \(C > 0\) is a constant depending only on \(E\) and \(\gamma\). (We can take \(C\) to be the distance between \(E\) and \((1 + \gamma)\partial E\).) If we choose \(\delta = \varepsilon((1 + \gamma)\tau\rho)(1 + \gamma)\tau\rho\) and \(k \geq k(\delta)\), then, by~\eqref{eq:2.3} and the fact that \(\varepsilon\) is increasing, it follows that
 \begin{align*}
 0 < d\tau C &\leq \varepsilon(|(1 + \gamma)\tau x|)|(1 + \gamma)\tau x| + \delta \\
 &\leq 2\varepsilon((1 + \gamma)\tau \rho)(1 + \gamma) \tau \rho.
 \end{align*}
 Since \(\varepsilon\) is continuous and \(\varepsilon(0) = 0\), it is possible to choose \(\tau\) to be small enough so that this inequality does not hold. This contradiction proves the claim.

We now use the above claim to finish the proof. With \(\tau\) and \(k\) as above, we will show that
 \begin{equation}
 \label{eq:2.5}
 \varphi'(0)(\tau E) \subset \varphi_k((1 + \gamma)\tau E).
 \end{equation}
 Suppose that this were not the case, so that there is a \(z\) in \(E\) for which the equation \(\varphi'(0)(\tau z) = \varphi_k((1 + \gamma)\tau x)\) has no solution \(x \in E\). Define a map \(H \colon \partial E \times [0, 1] \to S^{2n-1}\) by
 \[
 H(x, t) = \frac{t \varphi_k((1 + \gamma)\tau x) + (1 - t) \varphi'(0)((1 + \gamma)\tau x) - \varphi'(0)(\tau z)}{|t \varphi_k((1 + \gamma)\tau x) + (1 - t) \varphi'(0)((1 + \gamma)\tau x) - \varphi'(0)(\tau z)|}.
 \]
 The map \(H\) is well-defined by the claim just proven. The assumption we just made implies that \(H(\cdot, 1)\) extends to \(E\), which implies that \(H(\cdot, 1)\) must have Brouwer degree zero. The maps \(H(\cdot, 1)\) and \(H(\cdot, 0)\) are homotopic, so they must have the same degree. But \(H(\cdot, 0)\) is given by
 \[
 H(x, 0) = \frac{\varphi'(0)((1 + \gamma)x - z)}{|\varphi'(0)((1 + \gamma)x - z)|} = \frac{\varphi'(0)(x - (1 + \gamma)^{-1}z)}{|\varphi'(0)(x - (1 + \gamma)^{-1}z)|},
 \]
 so it must have degree \(1\) or \(-1\). This contradiction shows that the inclusion in~\eqref{eq:2.5} must hold. It follows that
 \[
 \tau^2 \overline{w}(\varphi'(0)E) = \overline{w}(\varphi'(0)(\tau E)) \leq \overline{w}(\varphi_k((1 + \gamma)\tau E)) = (1 + \gamma)^2 \tau^2 \overline{w}(E).
 \]
 Since \(\gamma > 0\) was arbitrary, it follows that \(\overline{w}(E) \leq \overline{w}(\varphi'(0)E)\). Combining this with~\eqref{eq:2.2}, we conclude that
 \[
 \overline{w}(E) = \overline{w}(\varphi'(0)E), \qquad \text{ for every ellipsoid } E \subset B(\rho).
 \]
 The linearity of \(\varphi'(0)\) and the conformality property of \(\overline{w}\) imply that \(\varphi'(0)\) preserves the \(\overline{w}\)-capacity of \emph{every} ellipsoid in \(\mathbb{R}^{2n}\) centered at the origin. It follows from Theorem~\ref{thm:ellcap} that \(\varphi'(0)\) preserves the linear symplectic width of every such ellipsoid, and we conclude from Theorem~\ref{thm:linchar} that \(\varphi'(0)\) is either symplectic or anti-symplectic.
\end{proof}

Our first application of Theorem~\ref{thm:limit} is to prove the following non-linear analogue of Theorem~\ref{thm:linchar}.

\begin{theorem}[{\cite[Theorem 4]{ekelandhofer}}] \label{thm:nonlinchar}
 Let \(\varphi \colon \mathbb{R}^{2n} \to \mathbb{R}^{2n}\) be a smooth map. The following are equivalent.
 \begin{enumerate}
 \item \(\varphi\) preserves the capacity of every ellipsoid.
 \item \(\varphi^*\omega = \pm \omega\).
 \end{enumerate}
\end{theorem}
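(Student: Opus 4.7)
The plan is to prove the two implications essentially independently; the real content lies in \((1) \Rightarrow (2)\), which I will establish by applying Theorem~\ref{thm:limit} pointwise and then globalising.

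For \((2) \Rightarrow (1)\): if \(\varphi^*\omega = \omega\), then \(\varphi\) is a symplectic local diffeomorphism, and by monotonicity of \(c\) applied to \(\varphi\) and (locally) to \(\varphi^{-1}\), we obtain \(c(\varphi(E)) = c(E)\) for every ellipsoid \(E\). The anti-symplectic case \(\varphi^*\omega = -\omega\) reduces to the symplectic case by composing with the linear anti-symplectomorphism \(T(x,y) = (-x,y)\): \((T\circ\varphi)^*\omega = \omega\), and the argument of Proposition~\ref{prop:antisymp} (with \(c\) in place of \(w_L\)) shows that \(T\) itself preserves \(c\).

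For \((1) \Rightarrow (2)\), fix any \(p \in \mathbb{R}^{2n}\) and consider the translated map \(\tilde\varphi(x) := \varphi(x+p) - \varphi(p)\). Because translations are symplectomorphisms and any normalized capacity is translation-invariant, hypothesis (1) transfers cleanly to \(\tilde\varphi\): for every ellipsoid \(E\) centered at the origin,
\[
 c(\tilde\varphi(E)) = c(\varphi(E+p)) = c(E+p) = c(E).
\]
Applying Theorem~\ref{thm:limit} to the constant sequence \(\varphi_k := \tilde\varphi\), which converges uniformly on any \(B(\rho)\) to its limit \(\tilde\varphi\) and is differentiable at \(0\) with derivative \(d\varphi_p\), concludes that \(d\varphi_p\) is either symplectic or anti-symplectic, for every \(p \in \mathbb{R}^{2n}\).

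To upgrade this pointwise dichotomy to a single global condition, consider the subsets \(U_\pm := \{p \in \mathbb{R}^{2n} : (d\varphi_p)^*\omega = \pm\omega\}\). Both \(U_+\) and \(U_-\) are closed, since \(p \mapsto d\varphi_p\) is continuous and the conditions \(A^*\omega = \pm\omega\) cut out closed subsets of matrix space; they are disjoint, since \(\omega \neq -\omega\); and by the previous paragraph, their union is all of \(\mathbb{R}^{2n}\). Since \(\mathbb{R}^{2n}\) is connected, one of \(U_\pm\) must be empty, yielding \(\varphi^*\omega = \omega\) or \(\varphi^*\omega = -\omega\) uniformly. The main obstacle I anticipate is ensuring that the translation \(\tilde\varphi\) genuinely inherits the ellipsoid-capacity-preserving hypothesis in the form required by Theorem~\ref{thm:limit}, but this is immediate from the translation invariance of \(c\); the rest of the proof is a repackaging of tools developed in Sections~\ref{section:2.2} and~\ref{section:2.4}.
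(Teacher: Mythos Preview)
Your argument for \((1)\Rightarrow(2)\) is correct and essentially the paper's approach. The paper also reduces to Theorem~\ref{thm:limit} after translating so that \(p=0\) and \(\varphi(p)=0\); the only difference is that the paper applies Theorem~\ref{thm:limit} to the dilation sequence \(\varphi_t(x)=t^{-1}\varphi(tx)\) (using conformality to verify the hypothesis), whereas you apply it to the constant sequence \(\varphi_k=\tilde\varphi\). Your choice is a mild simplification. Your connectedness argument for globalising the sign is the explicit version of the paper's one-line remark ``the sign will be independent of \(z\) by continuity.''

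There is, however, a genuine gap in your \((2)\Rightarrow(1)\), in the anti-symplectic case. You compose on the \emph{left}: from \((T\circ\varphi)^*\omega=\omega\) you get \(c(T(\varphi(E)))=c(E)\), and then you need \(c(T(\varphi(E)))=c(\varphi(E))\). But \(\varphi(E)\) is not an ellipsoid, and the argument of Proposition~\ref{prop:antisymp} does \emph{not} transfer to a general normalized capacity \(c\) on arbitrary sets: that proof uses the specific definition of \(w_L\) as a supremum over affinely embedded balls, a structure that an abstract \(c\) satisfying only monotonicity, conformality, and non-triviality need not have. Nothing in the capacity axioms forces anti-symplectic invariance.

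The paper avoids this by composing on the \emph{right}: using Proposition~\ref{prop:normalform} one produces an anti-symplectomorphism \(\psi\) with \(\psi(E)=E\); then \(\varphi\circ\psi\) is symplectic and \((\varphi\circ\psi)(E)=\varphi(E)\), so \(c(\varphi(E))=c((\varphi\circ\psi)(E))=c(E)\) by the symplectic case. Equivalently, you could compose on the right with your \(T\): then \(\varphi\circ T\) is symplectic, so \(c(\varphi(T(E)))=c(E)\); since \(T(E)\) is still an ellipsoid and every ellipsoid is \(T(E)\) for some ellipsoid \(E\), this gives \(c(\varphi(E'))=c(T^{-1}(E'))=c(E')\), the last equality by Theorem~\ref{thm:ellcap} and Proposition~\ref{prop:antisymp}. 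Either way, the point is that the anti-symplectic correction must be applied \emph{before} \(\varphi\), so that it acts on an ellipsoid rather than on \(\varphi(E)\).
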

\begin{proof}
 We begin by proving that 2 implies 1. If \(\varphi\) is symplectic, then we have already seen that \(\varphi\) preserves the capacity of every subset of \(\mathbb{R}^{2n}\). If \(\varphi\) is anti-symplectic and if \(E\) is an ellipsoid in \(\mathbb{R}^{2n}\), then we can find an anti-symplectomorphism \(\psi\) such that \(\psi(E) = E\), and then it follows from the symplectic case that \(c(\varphi(E)) = c(\varphi\psi(E)) = c(E)\). (Such a \(\psi\) can be constructed by using Proposition~\ref{prop:normalform}, similar to the construction of the map \(T\) in the proof of Proposition~\ref{prop:antisymp}.)

 Now, suppose that \(\varphi\) preserves the capacity of every ellipsoid. We want to prove that for every \(z\) in \(\mathbb{R}^{2n}\), the linear map \(\varphi'(z)\) satisfies \(\varphi'(z)^*\omega = \pm \omega\). (The sign will be independent of \(z\) by continuity.) By composing with translations, we may assume without loss of generality that \(z = 0\) and \(\varphi(z) = 0\). For \(t > 0\), define
 \[
 \varphi_t(x) = \frac{1}{t}\varphi(tx).
 \]
 The conformality property of \(c\) immediately implies that each \(\varphi_t\) preserves the capacity of every ellipsoid centered at the origin. Since \((\varphi_t)\) converges uniformly to \(\varphi'(0)\) as \(t \downarrow 0\), it follows from Theorem~\ref{thm:limit} that \(\varphi'(0)\) is either symplectic or anti-symplectic.
\end{proof}

As a second application of Theorem~\ref{thm:limit}, we have the following \emph{rigidity} result, due to Eliashberg~\cite{eliashberg} and Gromov~\cite{gromovpdr}. We follow the proof due to Ekeland and Hofer in~\cite{ekelandhofer}.

\begin{theorem}
 \label{thm:c0closure}
 For any symplectic manifold, the group of symplectomorphisms is closed in the group of diffeomorphisms with respect to the \(C^0\)-compact-open topology.
\end{theorem}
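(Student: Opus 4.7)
The plan is to deduce the theorem from the pointwise rigidity result Theorem \ref{thm:limit} via a localization-and-continuity argument. Let $(\varphi_k)$ be a sequence of symplectomorphisms of $(M,\omega)$ converging in the $C^0$-compact-open topology to a diffeomorphism $\varphi$; the task is to show $\varphi^*\omega = \omega$.

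First I would reduce to a pointwise question in a Darboux chart. The identity $\varphi^*\omega = \omega$ is a pointwise equality of smooth 2-forms, so it suffices to fix an arbitrary $p_0 \in M$, choose Darboux charts around $p_0$ and $\varphi(p_0)$ in which $\omega$ is the standard symplectic form on $\mathbb{R}^{2n}$, and verify the equality at $p_0$. After composing with translations, we may take $p_0 = 0 = \varphi(p_0)$, and after small adjustments (replacing $\varphi_k$ by its composition with translation by $-\varphi_k(0)$, using $\varphi_k(0) \to 0$) we may further assume $\varphi_k(0) = 0$ with $\varphi_k \to \varphi$ uniformly on some closed ball $B(\rho)$.

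Next, I would apply Theorem \ref{thm:limit}. Each $\varphi_k$ is a symplectic embedding, hence it preserves the capacity of every subset (in particular of every ellipsoid centered at the origin). Since $\varphi$ is smooth it is differentiable at $0$, so Theorem \ref{thm:limit} gives that $d\varphi_0$ is symplectic or anti-symplectic. Running this argument at each point $p \in M$ (after appropriate translations in the source and target Darboux charts) shows that $d\varphi_p$ is symplectic or anti-symplectic everywhere, so $(\varphi^*\omega)_p = \epsilon(p)\,\omega_p$ with $\epsilon(p) \in \{+1,-1\}$ at every $p$. Continuity of $p \mapsto \varphi^*\omega_p - \omega_p$ forces $\epsilon$ to be locally constant, hence constant on each connected component of $M$.

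The last step, and the main obstacle, is to rule out $\epsilon \equiv -1$ on a component. Each $\varphi_k$ preserves the volume form $\omega^n/n!$ and is therefore orientation-preserving; by a standard degree-theoretic argument, a $C^0$-limit of orientation-preserving diffeomorphisms that is itself a diffeomorphism remains orientation-preserving, so $\det d\varphi_p > 0$ everywhere. If $\epsilon \equiv -1$ on a component then $\varphi^*\omega^n = (-1)^n \omega^n$ there, giving $\det d\varphi_p = (-1)^n$; for odd $n$ this immediately contradicts orientation preservation. The even-dimensional case is more subtle, since anti-symplectic linear maps preserve orientation when $n$ is even; ruling out the anti-symplectic possibility in that case requires a finer argument exploiting additional structure of the symplectic form under $C^0$ limits, and this is the conceptual heart of the Eliashberg--Gromov rigidity theorem.
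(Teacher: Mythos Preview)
Your reduction via Darboux charts and application of Theorem~\ref{thm:limit}, together with the orientation argument disposing of the anti-symplectic possibility when $n$ is odd, are exactly what the paper does. The gap is that you do not actually handle the even case: saying that ``ruling out the anti-symplectic possibility\ldots requires a finer argument'' and that this is ``the conceptual heart'' is not a proof.

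The paper closes this gap with a short stabilization trick rather than anything deep. When $n$ is even, consider the sequence $\mathrm{id}_{\mathbb{R}^2}\times \varphi_k$ acting on a small ball in $\mathbb{R}^2 \times B(\rho) \subset \mathbb{R}^{2(n+1)}$. Each of these is a symplectic embedding for the standard form on $\mathbb{R}^{2(n+1)}$, and the sequence converges uniformly to $\mathrm{id}_{\mathbb{R}^2}\times \varphi$. Since $n+1$ is odd, the already-established odd case forces $(\mathrm{id}_{\mathbb{R}^2}\times \varphi)'(0) = \mathrm{id}_{\mathbb{R}^2}\times \varphi'(0)$ to be symplectic, which in turn forces $\varphi'(0)$ itself to be symplectic (if $\varphi'(0)$ were anti-symplectic then $\mathrm{id}_{\mathbb{R}^2}\times \varphi'(0)$ would be neither symplectic nor anti-symplectic). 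This one-line reduction to the odd case is what you are missing.
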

In other words, if a sequence of symplectomorphisms converges uniformly on compact sets to a diffeomorphism, then the limit is actually a symplectomorphism. We stress that no assumptions on the derivatives of the symplectomorphisms are needed.
\begin{proof}
 This is a local statement, so by Darboux's theorem we may assume that we have a sequence \((\varphi_k)\) of symplectic embeddings of the ball \(B(\rho)\) into \(\mathbb{R}^{2n}\) which converges uniformly to a smooth embedding \(\varphi \colon B(\rho) \to \mathbb{R}^{2n}\) and that we need to prove that \(\varphi'(0)\) is symplectic. The maps \(\varphi_k\) are symplectic, so in particular they preserve the capacity of every ellipsoid, and then Theorem~\ref{thm:limit} implies that \(\varphi'(0)\) is either symplectic or anti-symplectic. As we will now show, the fact that each \(\varphi_k\) is actually a symplectic embedding (and not just a continuous map preserving capacities) rules out the case that \(\varphi'(0)\) is anti-symplectic.

 If \(n\) is odd, then any anti-symplectomorphism is orientation-reversing. Since each \(\varphi_k\) preserves the orientation, \(\varphi\) does too, implying that \(\varphi'(0)\) must be an orientation-preserving linear map. So, if \(n\) is odd, then \(\varphi'(0)\) can only be symplectic. Suppose now that \(n\) is even. On a sufficiently small ball contained in \(\mathbb{R}^2 \times B(\rho)\), the sequence \((\mathrm{id}_{\mathbb{R}^2} \times \varphi_k)\) converges uniformly to \(\mathrm{id}_{\mathbb{R}^2} \times \varphi\). Since each \(\mathrm{id}_{\mathbb{R}^2} \times \varphi_k\) is symplectic, the previous argument applies to show that \(\mathrm{id}_{\mathbb{R}^2} \times \varphi'(0)\) is symplectic, and so \(\varphi'(0)\) must be symplectic.
\end{proof}

\section{Non-squeezing for \texorpdfstring{\(\omega^k\)}{omega k}} \label{sec:powers}

In this section we discuss analogues of the non-squeezing theorem for powers of the standard symplectic form on \(\mathbb{R}^{2n}\).

\subsection{Affine non-squeezing for \texorpdfstring{\(\omega^k\)}{omega k}}
\label{section:3.1}

Let us again consider \(\mathbb{R}^{2n}\) with the standard symplectic form, which we write in terms of the standard symplectic basis denoted by \(e_1, \dots, e_n, f_1, \dots, f_n\). In~\cite{barron}, Barron and Shafiee observed that the powers \(\omega, \omega^2, \dots, \omega^n\) each satisfy an analogue of the affine non-squeezing theorem. As before, let \(B(r)\) denote the closed ball of radius \(r\) in \(\mathbb{R}^{2n}\). If \(1 \leq k \leq n\), then we define the symplectic \(k\)-cylinder \(Z_k(R)\) by 
\[
 Z_k(R) := \left\{ (x_1, \dots, x_n, y_1, \dots, y_n) \in \mathbb{R}^{2n} : \sum_{j=1}^k x_j^2 + y_j^2 \leq R^2 \right\}.
\]
The following theorem is the affine non-squeezing theorem obtained by Barron and Shafiee.

\begin{theorem} \label{thm:barron}
 ~\cite[Theorem 2.16]{barron} Suppose that \(1 \leq k \leq n\). Let \(\Psi \colon \mathbb{R}^{2n} \to \mathbb{R}^{2n}\) be an affine map of the form \(z \mapsto Az + b\), where \(A\) is a linear map preserving \(\omega^k\) and \(b \in \mathbb{R}^{2n}\). If \(\Psi(B(r)) \subseteq Z_k(R)\), then \(r \leq \frac{\sqrt[2k]{(2k)!}}{\sqrt{2}}R\).
\end{theorem}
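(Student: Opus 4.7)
The plan is to mimic the proof of Theorem~\ref{thm:affns} essentially verbatim, with the $2k$-form $\omega^k$ replacing $\omega$ and with the tuple $(e_1, f_1, \ldots, e_k, f_k)$ replacing $(e_1, f_1)$. The crucial new ingredient is that the stabilizer of $\omega^k$ in $\mathrm{GL}(2n, \mathbb{R})$ is closed under transposition. For $k = 1$ this was the one-line argument using $J^{-1} = -J$, but for $k \geq 2$ it is genuinely nontrivial and will be the main obstacle of the proof; I would defer it to the structural results on the stabilizer of $\omega^k$ obtained in Section~\ref{section:3.2}.

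Granting that $A^T$ also preserves $\omega^k$, the expansion $\omega^k = k! \sum_{|I|=k} e^{i_1} \wedge f^{i_1} \wedge \cdots \wedge e^{i_k} \wedge f^{i_k}$ gives $\omega^k(e_1, f_1, \ldots, e_k, f_k) = k!$, which $(A^T)^*\omega^k = \omega^k$ upgrades to
\[
\omega^k(A^T e_1, A^T f_1, \ldots, A^T e_k, A^T f_k) = k!.
\]
The next step would be the Hadamard-type estimate $|\omega^k(v_1, \ldots, v_{2k})| \leq \frac{(2k)!}{2^k} \prod_i |v_i|$, which I would derive from the Pfaffian identity $\omega^k(v_1, \ldots, v_{2k}) = k! \cdot \mathrm{Pf}\bigl( \omega(v_i, v_j) \bigr)$ by expanding the Pfaffian as a sum over the $(2k-1)!! = \frac{(2k)!}{2^k k!}$ perfect pair-partitions of $\{1, \ldots, 2k\}$ and bounding each factor by $|\omega(v_i, v_j)| \leq |v_i||v_j|$. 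Combining with the previous display yields $\prod_{j=1}^k |A^T e_j|\, |A^T f_j| \geq \frac{k! \, 2^k}{(2k)!}$, so by the arithmetic-geometric mean inequality some $u^* \in \{e_1, f_1, \ldots, e_k, f_k\}$ satisfies $|A^T u^*| \geq \bigl( \frac{k! \, 2^k}{(2k)!} \bigr)^{1/(2k)}$.

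The remainder of the argument copies Theorem~\ref{thm:affns} verbatim. Set $z = \pm r A^T u^* / |A^T u^*| \in B(r)$, with the sign chosen to match that of $\langle u^*, b\rangle$, so that $|\langle u^*, \Psi(z)\rangle| \geq r |A^T u^*|$. Since $u^*$ is one of the $2k$ coordinate directions defining the cylinder $Z_k(R)$, the hypothesis $\Psi(z) \in Z_k(R)$ forces $R^2 \geq \langle u^*, \Psi(z)\rangle^2 \geq r^2 |A^T u^*|^2$, hence
\[
r \;\leq\; \bigl( \tfrac{(2k)!}{k! \, 2^k} \bigr)^{1/(2k)} R \;\leq\; \bigl( \tfrac{(2k)!}{2^k} \bigr)^{1/(2k)} R \;=\; \tfrac{(2k)!^{1/(2k)}}{\sqrt{2}}\, R.
\]
In fact the first of these inequalities already gives a bound that is sharper than the statement by a factor of $(k!)^{1/(2k)}$, which is consistent with the paper's announced improvement of the Barron--Shafiee result.
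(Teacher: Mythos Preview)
Your proposal is correct and follows essentially the same route as the paper: defer closure of the stabilizer under transposition to Section~\ref{section:3.2}, use $(A^T)^*\omega^k=\omega^k$ to obtain $\omega^k(A^Te_1,\dots,A^Tf_k)=k!$, bound $|\omega^k(v_1,\dots,v_{2k})|\le \tfrac{(2k)!}{2^k}\prod|v_i|$ by expanding $\omega^k$ into a signed sum of products of $\omega$-pairings, and finish exactly as in Theorem~\ref{thm:affns}. The only cosmetic difference is that you phrase the expansion via the Pfaffian over $(2k-1)!!$ perfect matchings, whereas the paper writes out the equivalent sum over $S_{2k}$ in~\eqref{eq:3.2}; both give the same Hadamard-type bound and hence the same sharper constant $\bigl(\tfrac{(2k)!}{k!\,2^k}\bigr)^{1/(2k)}$ noted after the proof in the paper.
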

For brevity, let us denote the coefficient of \(R\) in the conclusion of Theorem~\ref{thm:barron} by \(C_k\). That is,
\begin{equation}
 \label{eq:3.0}
 C_k = \frac{\sqrt[2k]{(2k)!}}{\sqrt{2}}.
\end{equation}
Note that \(C_k = 1\) when \(k = 1\), so we recover the classical affine non-squeezing theorem (Theorem~\ref{thm:affns}) as a special case. 

Note that when \(k > 1\), the coefficient \(C_k\) is always greater than \(1\). Also, if \(k = n\), then \(\Psi\) is volume-preserving and maps \(B(r)\) into the \emph{ball} \(Z_n(R) = B(R)\), so we obtain \(r \leq R\) trivially. So, in particular, the constant \(C_n\) is not optimal in the case that \(k = n\). In fact, we will prove later in this section (Theorem~\ref{thm:poweraffns}) that we can improve Theorem~\ref{thm:barron} to take \(C_k = 1\) for each \(k\). 

The proof of Theorem~\ref{thm:barron} given in~\cite{barron} follows essentially the same steps as the classical affine non-squeezing theorem. The first step is to argue that \(A^T\) also preserves \(\omega^k\). While this was easy to prove in the classical case, it is perhaps not so obvious here, and no proof is given in~\cite{barron}. We give a proof in the next section. In fact, we prove that Theorem~\ref{thm:barron} essentially reduces to the classical case.

Let us proceed with the proof for now under the assumption that \(A^T\) also preserves \(\omega^k\). It follows that
\begin{equation}
 \label{eq:3.1}
 k! = |\omega^k(A^T e_1, A^T f_1, \dots, A^T e_k, A^T f_k)|.
\end{equation}
In~\cite[Lemma 2.4]{barron} it is claimed that for \(u_1, \dots, u_{2k} \in \mathbb{R}^{2n}\),
\[
 \omega^k(u_1, \dots, u_{2k}) = \frac{k!}{2^k} \sum_{\sigma \in S_{2k}} \mathrm{sign}(\sigma) \prod_{i=1}^k \omega(u_{\sigma(2i-1)}, u_{\sigma(2i)}),
\]
but this is incorrect. We give the correct formula for \(\omega^k\) and then continue along the lines of~\cite{barron}.

\begin{proposition}
 For any \(u_1, \dots, u_{2k} \in \mathbb{R}^{2n}\),
 \begin{equation}
 \label{eq:3.2}
 \omega^k(u_1, \dots, u_{2k}) = \frac{1}{2^k} \sum_{\sigma \in S_{2k}} \mathrm{sign}(\sigma) \prod_{i=1}^k \omega(u_{\sigma(2i-1)}, u_{\sigma(2i)}).
 \end{equation}
\end{proposition}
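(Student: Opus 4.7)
The plan is to derive this formula by applying the standard formula for the wedge product of several alternating forms. Recall that if $\alpha_1, \ldots, \alpha_k$ are alternating tensors of degrees $p_1, \ldots, p_k$ on a real vector space and $n = p_1 + \cdots + p_k$, then
\[
(\alpha_1 \wedge \cdots \wedge \alpha_k)(v_1, \ldots, v_n) = \frac{1}{p_1! \cdots p_k!} \sum_{\sigma \in S_n} \mathrm{sign}(\sigma) \prod_{i=1}^k \alpha_i\bigl(v_{\sigma(p_1 + \cdots + p_{i-1} + 1)}, \ldots, v_{\sigma(p_1 + \cdots + p_i)}\bigr).
\]
Specializing to $\alpha_1 = \cdots = \alpha_k = \omega$ and $p_1 = \cdots = p_k = 2$, the product of factorials becomes $(2!)^k = 2^k$, yielding the claimed formula at once. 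Compared to the assertion of \cite[Lemma 2.4]{barron}, this differs by a factor of $k!$, which presumably arose there from treating the $k$ copies of $\omega$ as if they were distinct.

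For a self-contained argument, I would prove the formula by induction on $k$. The base case $k=1$ is immediate from the antisymmetry of $\omega$: the right-hand side is $\frac{1}{2}[\omega(u_1, u_2) - \omega(u_2, u_1)] = \omega(u_1, u_2)$. For the inductive step, I would write $\omega^k = \omega \wedge \omega^{k-1}$ and apply the $(2, 2k-2)$-shuffle expansion of the wedge product to obtain
\[
\omega^k(u_1, \ldots, u_{2k}) = \sum_{1 \le i < j \le 2k} \mathrm{sign}(\sigma_{ij}) \, \omega(u_i, u_j) \, \omega^{k-1}(u_1, \ldots, \widehat{u_i}, \ldots, \widehat{u_j}, \ldots, u_{2k}),
\]
where $\sigma_{ij}$ is the $(2, 2k-2)$-shuffle that moves the indices $i, j$ into positions $1, 2$. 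Applying the inductive hypothesis to each factor $\omega^{k-1}$ and using multiplicativity of the sign, the pair $(\sigma_{ij}, \tau)$ for $\tau \in S_{2k-2}$ identifies with an arbitrary permutation $\sigma \in S_{2k}$ satisfying $\sigma(1) < \sigma(2)$, giving
\[
\omega^k(u_1, \ldots, u_{2k}) = \frac{1}{2^{k-1}} \sum_{\substack{\sigma \in S_{2k} \\ \sigma(1) < \sigma(2)}} \mathrm{sign}(\sigma) \prod_{i=1}^k \omega(u_{\sigma(2i-1)}, u_{\sigma(2i)}).
\]

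The last step is to extend this restricted sum to all of $S_{2k}$. Swapping the values of $\sigma(1)$ and $\sigma(2)$ flips $\mathrm{sign}(\sigma)$ and simultaneously flips $\omega(u_{\sigma(1)}, u_{\sigma(2)})$ by antisymmetry, so the summand is invariant under this involution. Hence the sum over all of $S_{2k}$ equals twice the restricted sum, turning the factor $\frac{1}{2^{k-1}}$ into $\frac{1}{2^k}$ and completing the induction. The only real obstacle is the sign bookkeeping in the inductive step — verifying that the sign of $\sigma_{ij}$ times the sign of $\tau$ equals the sign of the composite permutation in $S_{2k}$ — but this is a standard and routine check.
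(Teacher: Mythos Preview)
Your proposal is correct. Your first approach---applying the standard wedge product formula $(\alpha_1 \wedge \cdots \wedge \alpha_k)(v_1, \ldots, v_n) = \frac{1}{p_1!\cdots p_k!}\sum_{\sigma} \mathrm{sign}(\sigma)\cdots$---is essentially identical to the paper's proof, which phrases the same computation via the alternation operator $\mathrm{Alt}$: the paper writes $\omega^k = \frac{(2k)!}{2^k}\mathrm{Alt}(\omega^{\otimes k})$ and expands $\mathrm{Alt}$ as $\frac{1}{(2k)!}\sum_{\sigma}\mathrm{sign}(\sigma)\,\sigma^*$, arriving at the same formula in three lines.

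Your inductive argument is a genuinely different and more self-contained route: rather than quoting the general wedge formula, you build it up via the $(2,2k-2)$-shuffle expansion of $\omega \wedge \omega^{k-1}$ and the symmetry under swapping $\sigma(1)\leftrightarrow\sigma(2)$. This buys independence from any particular convention for the wedge product (and makes the factor $1/2^k$ appear transparently, one factor of $1/2$ per step), at the cost of a little bookkeeping. The paper's direct approach is shorter but presupposes the reader accepts the $\mathrm{Alt}$-based convention; your induction would be preferable if one wanted to avoid that dependence.
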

\begin{proof}
 Using the definition of the wedge product in terms of the projection \(\mathrm{Alt}\), we see that 
 \begin{align*}
 \omega^k(u_1, \dots, u_{2k}) &= \frac{(2 + \cdots + 2)!}{2! \cdots 2!} \mathrm{Alt}(\omega \otimes \cdots \otimes \omega)(u_1, \dots, u_{2k}) \\
 &= \frac{(2k)!}{2^k} \cdot \frac{1}{(2k)!} \sum_{\sigma\in S_{2k}} \mathrm{sign}(\sigma)(\omega \otimes \cdots \otimes \omega)(u_{\sigma(1)}, \dots, u_{\sigma(2k)}) \\
 &= \frac{1}{2^k} \sum_{\sigma \in S_{2k}} \mathrm{sign}(\sigma) \prod_{i=1}^k \omega(u_{\sigma(2i-1)}, u_{\sigma(2i)}). \qedhere
 \end{align*}
\end{proof}

For brevity, we denote the vectors \(A^T e_1, A^T f_1, \dots, A^T e_k, A^T f_k\) by \(w_1, \dots, w_{2k}\). By using~\eqref{eq:3.2} to evaluate the right-hand side of~\eqref{eq:3.1}, we obtain
\begin{align*}
 k! &= |\omega^k(A^T e_1, A^T f_1, \dots, A^T e_k, A^T f_k)| \\
 &\leq \frac{1}{2^k}\sum_{\sigma \in S_{2k}}|\mathrm{sign}(\sigma)|\prod_{i=1}^k | \omega(w_{\sigma(2i-1)}, w_{\sigma(2i)}) | \\
 &\leq \frac{1}{2^k} \sum_{\sigma \in S_{2k}} \prod_{i=1}^k |w_{\sigma(2i-1)}||w_{\sigma(2i)}| \\
 &= \frac{1}{2^k} (2k)! \cdot |w_1| \cdots |w_{2k}|.
\end{align*}
By~\eqref{eq:3.0}, this gives 
\[
 \frac{k!}{C_k^{2k}} = \frac{k! \cdot 2^k }{(2k)!} \leq |w_1| \cdots |w_{2k}|.
\]
It follows that one of \(w_1, \dots, w_{2k}\) must have length at least \(\sqrt[2k]{k!}/C_k\). We may assume without loss of generality that \(w_1\) does. The proof concludes exactly as it does in the classical case by consideration of the image of \(\pm r \cdot w_1/|w_1|\) for an appropriate choice of sign. We note that with the correct formula~\eqref{eq:3.2} for \(\omega^k\), we actually obtain in Theorem~\ref{thm:barron} the stronger bound \(r \leq (C_k / \sqrt[2k]{k!})R\). This is still not optimal, though, since \(C_k> \sqrt[2k]{k!}\) whenever \(k > 1\).

In what follows, we show that is possible to avoid the use of~\eqref{eq:3.2} completely and to obtain the result of Theorem~\ref{thm:barron} with \(1\) in place of the coefficient \(C_k\). We will achieve this with the use of the classical \emph{Wirtinger inequality.}

\begin{theorem}[Wirtinger inequality] \label{thm:wirt}
 Suppose that \(1 \leq k \leq n\). For any orthonormal \(v_1, \dots, v_{2k}\) in \(\mathbb{R}^{2n}\), 
 \[
 |\omega^k( v_1, \dots, v_{2k} )| \leq k!,
 \]
 with equality if and only if the span of \( v_1, \dots, v_{2k} \) is invariant under \(J\).
\end{theorem}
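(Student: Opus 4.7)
The plan is to reduce the statement to a computation on the $2k$-dimensional subspace $V = \mathrm{span}(v_1,\dots,v_{2k})$ by exploiting the canonical form of the antisymmetric bilinear form $\omega|_V$ on the inner product space $V$ (with the induced metric), and then to bound the resulting coefficients via Cauchy--Schwarz applied to $J$.

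First, I would use the standard linear-algebra fact that any antisymmetric bilinear form on a $2k$-dimensional inner product space admits an orthonormal basis $w_1,\dots,w_{2k}$ of $V$ and non-negative numbers $\lambda_1 \geq \cdots \geq \lambda_k \geq 0$ such that
\[
 \omega|_V \,=\, \lambda_1\, w^1 \wedge w^2 \,+\, \cdots \,+\, \lambda_k\, w^{2k-1} \wedge w^{2k}.
\]
Taking the $k$-th exterior power gives $(\omega|_V)^k = k!\, \lambda_1 \cdots \lambda_k\, w^1 \wedge \cdots \wedge w^{2k}$. Since both $(v_1,\dots,v_{2k})$ and $(w_1,\dots,w_{2k})$ are orthonormal bases of $V$, they are related by a matrix in $\mathrm{O}(2k)$, hence $v^1 \wedge \cdots \wedge v^{2k} = \pm\, w^1 \wedge \cdots \wedge w^{2k}$, and we obtain
\[
 |\omega^k(v_1,\dots,v_{2k})| \,=\, k!\, \lambda_1 \cdots \lambda_k.
\]

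Second, I would bound each $\lambda_i$ using the compatibility $\omega(x,y) = g(Jx,y)$ together with the fact that $J$ is an isometry. Indeed, by Cauchy--Schwarz,
\[
 \lambda_i \,=\, \omega(w_{2i-1}, w_{2i}) \,=\, g(Jw_{2i-1}, w_{2i}) \,\leq\, |Jw_{2i-1}|\, |w_{2i}| \,=\, 1,
\]
so $\lambda_1 \cdots \lambda_k \leq 1$, giving the inequality $|\omega^k(v_1,\dots,v_{2k})| \leq k!$.

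Finally, I would analyze the equality case. Equality forces $\lambda_i = 1$ for all $i$, and the Cauchy--Schwarz equality case then forces $Jw_{2i-1} = w_{2i}$ for each $i$; thus $V = \mathrm{span}(w_1, Jw_1, \dots, w_{2k-1}, Jw_{2k-1})$ is $J$-invariant. Conversely, if $V$ is $J$-invariant, I can build a $J$-adapted orthonormal basis iteratively, picking a unit $w_1 \in V$, setting $w_2 = Jw_1$, choosing unit $w_3 \in V$ orthogonal to $\mathrm{span}(w_1,w_2)$ (which is $J$-invariant, so its orthogonal complement in $V$ is too), setting $w_4 = Jw_3$, and so on. In this basis $\omega|_V$ has the canonical form with all $\lambda_i = 1$, forcing $|\omega^k(v_1,\dots,v_{2k})| = k!$. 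The main subtlety, and the only non-routine step, is the iterative construction in the converse direction: one must confirm that at each stage the orthogonal complement of the $J$-invariant span built so far is itself $J$-invariant, which follows from $J$ being orthogonal.
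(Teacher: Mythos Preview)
Your proposal is correct and follows essentially the same route as the paper: reduce to the span $V$, put $\omega|_V$ in orthonormal block-diagonal form, compute $\omega^k$ on this basis, and bound each block coefficient by Cauchy--Schwarz using $\omega(x,y)=g(Jx,y)$. The only cosmetic difference is in the converse of the equality case: the paper reuses the normal-form basis $u_j$ already constructed and shows directly that $Ju_{2j-1}=\omega(u_{2j-1},u_{2j})\,u_{2j}$ by expanding $Ju_{2j-1}$ in that basis, whereas you build a fresh $J$-adapted basis iteratively; both arguments are standard and equally short.
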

\begin{proof}
 (Following~\cite[1.8.2]{federer}.) Let us consider first the case that \(k = 1\). By Cauchy-Schwarz, we have that 
 \[
 |\omega(v_1, v_2)| = |g(Jv_1, v_2)| \leq |Jv_1| \cdot |v_2| \leq 1,
 \]
 with equality if and only if \(Jv_1\) and \(v_2\) are colinear. That is, if and only if the span of \( v_1, v_2 \) is invariant under \(J\). 
 
 Now, suppose that \(k > 1\). Let \(S\) be the span of \(\ v_1, \dots, v_{2k} \). By the normal form theorem for skew-symmetric bilinear forms, there is an orthonormal basis \(u_1, \dots, u_{2k}\) of \(S\) such that
 \[
 \omega|_S = \sum_{j=1}^k \omega(u_{2j-1}, u_{2j}) \, u^{2j-1} \wedge u^{2j}.
 \]
 In particular,
 \[
 \omega^k(u_1, \dots, u_{2k}) = k! \cdot \prod_{j=1}^k \omega(u_{2j-1}, u_{2j}).
 \]
 Note that \(\omega^k(u_1, \dots, u_{2k})\) only depends on \(u_1 \wedge \cdots \wedge u_{2k}\). Since \(v_1, \dots, v_{2k}\) and \(u_1, \dots, u_{2k}\) are both orthonormal bases of \(S\), we have that \(v_1 \wedge \cdots \wedge v_{2k} = \varepsilon\cdot u_1 \wedge \cdots \wedge u_{2k}\) for some \(\varepsilon \in \{1, -1\}\), and it follows that
 \[
 |\omega^k(v_1, \dots, v_{2k})| = |\omega^k(u_1, \dots, u_{2k})| = k! \cdot \prod_{j=1}^k |\omega(u_{2j-1}, u_{2j})|.
 \]
 It follows from the \(k = 1\) case that \(|\omega^k(v_1, \dots, v_{2k})| \leq k!\). In particular, if equality holds, then \(|\omega(u_{2j-1}, u_{2j})| = 1\) for each \(j\), so by the \(k = 1\) case again it follows that \(S\) is invariant under \(J\). Conversely, if \(S\) is invariant under \(J\), then, for each \(j\), 
 \[ 
 Ju_{2j-1} = \sum_{\ell=1}^{2k} g(Ju_{2j-1}, u_\ell) u_\ell = \sum_{\ell=1}^{2k} \omega(u_{2j-1}, u_\ell) u_\ell = \omega(u_{2j-1}, u_{2j}) u_{2j}.
 \] 
 It follows that \(|\omega(u_{2j-1}, u_{2j})| = |Ju_{2j-1}| = |u_{2j-1}| = 1\) for each \(j\), and we conclude from the above that \(|\omega^k(v_1, \dots, v_{2k})| = k!\).
\end{proof}

We would like to apply the Wirtinger inequality to estimate the right-hand side of~\eqref{eq:3.1}. Since the Wirtinger inequality is for \emph{orthonormal} vectors, we apply the Gram-Schmidt process to \(w_1, \dots, w_{2k}\) to obtain orthogonal vectors \(v_1, \dots, v_{2k}\):
\[
 v_1 = w_1, \qquad v_j = w_j - \sum_{i=1}^{j-1} \mathrm{proj}_{v_i}(w_j).
\]
Note that \(|v_j| \leq |w_j|\) for each \(j\), for \(|w_j|^2\) is equal to a sum of squares which includes \(|v_j|^2\). Moreover, by construction we have that \(w_1 \wedge \cdots \wedge w_{2k} = v_1 \wedge \cdots \wedge v_{2k}\). It follows that 
\begin{align*}
 k! = \omega^k(w_1, \dots, w_{2k}) &= \omega^k(v_1, \dots, v_{2k}) \\
 &= |v_1| \cdots |v_{2k}| \cdot \omega\left(\frac{v_1}{|v_1|}, \dots, \frac{v_{2k}}{|v_{2k}|}\right) \\
 &\leq |v_1| \cdots |v_{2k}| \cdot k! \\
 &\leq |w_1| \cdots |w_{2k}| \cdot k!.
\end{align*}
So, \(|w_1| \cdots |w_{2k}| \geq 1\). As before, we may assume without loss of generality that \(|w_1| \geq 1\), and then the proof that \(r \leq R\) follows exactly as it does in the classical case. So, modulo the proof that the stabilizer of \(\omega^k\) is closed under transposition (which we prove in the next section), we have established the following improved affine non-squeezing theorem for \(\omega^k\).

\begin{theorem}[Improved affine non-squeezing theorem for \(\omega^k\)] \label{thm:poweraffns}
 Let \(1 \leq k \leq n\). Let \(\Psi \colon \mathbb{R}^{2n} \to \mathbb{R}^{2n}\) be an affine map of the form \(z \mapsto Az + b\), where \(A\) is a linear map preserving \(\omega^k\) and \(b \in \mathbb{R}^{2n}\). If \(\Psi(B(r)) \subseteq Z_k(R)\), then \(r \leq R\).
\end{theorem}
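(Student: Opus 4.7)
The plan is to mirror the proof of the classical affine non-squeezing theorem (Theorem~\ref{thm:affns}), but in place of a single Cauchy-Schwarz step I will apply the Wirtinger inequality (Theorem~\ref{thm:wirt}). This avoids the use of the explicit expansion~\eqref{eq:3.2} altogether. The crucial input, which I will take for granted from Section~\ref{section:3.2}, is that the stabilizer of $\omega^k$ in $\mathrm{GL}(2n, \mathbb{R})$ is closed under transposition, so that $A^T$ also preserves $\omega^k$. With this in hand, if I set $w_{2i-1} := A^T e_i$ and $w_{2i} := A^T f_i$ for $1 \leq i \leq k$, then
\[
k! \;=\; |\omega^k(e_1, f_1, \dots, e_k, f_k)| \;=\; |\omega^k(w_1, \dots, w_{2k})|.
\]

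The Wirtinger inequality is formulated for orthonormal tuples, whereas the $w_j$ need not be orthogonal, so the key step is to pass to an orthogonal frame. I would apply Gram-Schmidt to $w_1, \dots, w_{2k}$ to obtain orthogonal vectors $v_1, \dots, v_{2k}$ with $|v_j| \leq |w_j|$ for every $j$ (each $|w_j|^2$ is a sum of squares including $|v_j|^2$), and with the essential property that $v_1 \wedge \cdots \wedge v_{2k} = w_1 \wedge \cdots \wedge w_{2k}$, since Gram-Schmidt differs from the identity by an upper-triangular matrix with unit diagonal. Because $\omega^k$ depends on the $2k$-tuple only through its wedge product, Theorem~\ref{thm:wirt} applied to the orthonormal tuple $v_1/|v_1|, \dots, v_{2k}/|v_{2k}|$ yields
\[
k! \;=\; |\omega^k(v_1, \dots, v_{2k})| \;=\; |v_1| \cdots |v_{2k}| \cdot \bigl|\omega^k\bigl(\tfrac{v_1}{|v_1|}, \dots, \tfrac{v_{2k}}{|v_{2k}|}\bigr)\bigr| \;\leq\; |v_1| \cdots |v_{2k}| \cdot k!,
\]
so $|w_1| \cdots |w_{2k}| \geq |v_1| \cdots |v_{2k}| \geq 1$. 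In particular at least one $|w_j| \geq 1$; without loss of generality, $|A^T e_1| \geq 1$.

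The classical argument now finishes the proof verbatim. Setting $z := \pm r \cdot A^T e_1 / |A^T e_1| \in B(r)$, the containment $\Psi(z) \in Z_k(R)$ forces
\[
R^2 \;\geq\; \langle e_1, \Psi(z) \rangle^2 \;=\; \bigl( \pm r |A^T e_1| + \langle e_1, b \rangle \bigr)^2 \;\geq\; r^2 |A^T e_1|^2 \;\geq\; r^2,
\]
once the sign in the definition of $z$ is chosen to match that of $\langle e_1, b \rangle$. The main substantive obstacle is the transpose-closure of the stabilizer of $\omega^k$, which is not at all obvious for $k > 1$ and which is the reason Section~\ref{section:3.2} is needed; once that is granted, the replacement of Cauchy-Schwarz by Wirtinger (after a Gram-Schmidt normalization) is exactly what removes the suboptimal constant $C_k$ from the Barron-Shafiee bound and delivers the sharp inequality $r \leq R$.
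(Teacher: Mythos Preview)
Your proposal is correct and matches the paper's own argument essentially step for step: assume transpose-closure of the stabilizer of $\omega^k$ (deferred to Section~\ref{section:3.2}), apply Gram--Schmidt to the vectors $A^T e_1, A^T f_1, \dots, A^T e_k, A^T f_k$, invoke Wirtinger on the resulting orthonormal frame to obtain $|w_1|\cdots|w_{2k}| \geq 1$, and finish with the classical sign-choice argument. There is nothing to add.
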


\subsection{The stabilizer of \texorpdfstring{\(\omega^k\)}{omega k}}
\label{section:3.2}

The proofs of Theorem~\ref{thm:barron} and Theorem~\ref{thm:poweraffns} depended on the as-of-then unproven fact that the stabilizer of \(\omega^k\) is closed under transposition. In this section, we prove that this is the case. In fact, we prove the following stronger result.

\begin{theorem} \label{thm:power-op}
 Suppose that \(1 \leq k \leq n - 1\). Let \(A \colon \mathbb{R}^{2n} \to \mathbb{R}^{2n}\) be a linear map for which \(A^*(\omega^k) = \omega^k\). If \(k\) is odd, then \(A^*\omega = \omega\), and if \(k\) is even, then \(A^*\omega = \omega\) or \(A^*\omega = -\omega\).
\end{theorem}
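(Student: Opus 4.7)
The plan is to set $\eta := A^*\omega$, so that $\eta^k = A^*\omega^k = \omega^k$, and to deduce $\eta = \omega$ (for $k$ odd) or $\eta = \pm\omega$ (for $k$ even). The key tool is the hard Lefschetz theorem: the wedge-with-$\omega^j$ map $L_\omega^{j}\colon \Lambda^p V^* \to \Lambda^{p+2j} V^*$ is injective whenever $p + j \leq n$.

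First I would establish that $\eta$ is non-degenerate. If $u \in \ker\eta$, then $0 = \iota_u\eta^k = k\eta^{k-1}\wedge\iota_u\eta$, and hence $0 = \iota_u\omega^k = k\omega^{k-1}\wedge\iota_u\omega$. Applying Lefschetz to $\Lambda^1 V^*$ (valid since $k \leq n$) forces $\iota_u\omega = 0$, so $u = 0$, confirming that $\eta$ is symplectic.

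The core step is a spectral computation. Define $T\colon V \to V$ by $\omega(Tv, w) = \eta(v, w)$; antisymmetry of $\eta$ and $\omega$ makes $T$ $\omega$-self-adjoint (i.e.\ $\omega(Tv, w) = \omega(v, Tw)$), and by the previous step $T$ is invertible. In the semisimple real-spectrum case one finds a symplectic basis $(e_i, f_i)$ with $Te_i = \lambda_i e_i$, $Tf_i = \lambda_i f_i$ (using that eigenspaces of $T$ for distinct eigenvalues are mutually $\omega$-orthogonal and hence $\omega$-symplectic). Then $\eta = \sum_{i=1}^n \lambda_i\, e^i \wedge f^i$, so
\[
\eta^k = k!\sum_{|I|=k}\Bigl(\prod_{i \in I}\lambda_i\Bigr)\, e^{i_1}\wedge f^{i_1} \wedge \cdots \wedge e^{i_k}\wedge f^{i_k},
\]
while $\omega^k$ has each such coefficient equal to $1$. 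The equality $\eta^k = \omega^k$ therefore becomes $\prod_{i\in I}\lambda_i = 1$ for every $k$-subset $I \subset \{1,\ldots,n\}$. When $k \leq n-1$ one can always find two such subsets differing in exactly one index, and the ratio of the corresponding identities forces $\lambda_i = \lambda_j$; iterating, all $\lambda_i$ coincide with a common $\lambda$ satisfying $\lambda^k = 1$. Over $\mathbb{R}$ this gives $\lambda = 1$ if $k$ is odd and $\lambda = \pm 1$ if $k$ is even, i.e.\ $\eta = \pm\omega$.

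The main obstacle I anticipate is that $\omega$-self-adjoint $T$ need not be semisimple with real spectrum: it can admit complex conjugate eigenvalue pairs or nontrivial Jordan blocks in its symplectic normal form. To avoid a detailed block-by-block analysis I would use a zero-dimensionality argument: the differential of $\eta \mapsto \eta^k$ at any symplectic $\eta$ equals $\beta \mapsto k\eta^{k-1}\wedge\beta$, which by Lefschetz applied at $\eta$ is injective for $k \leq n-1$. Consequently the variety $\{\eta : \eta^k = \omega^k\}$ is everywhere $0$-dimensional and hence discrete, and combined with the diagonalizable analysis above this pins down the full real solution set to $\{\pm\omega\}$ in the even case and $\{\omega\}$ in the odd case, which is exactly the claim.
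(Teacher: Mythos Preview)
Your reduction to studying $\eta := A^*\omega$ with $\eta^k=\omega^k$, the non-degeneracy argument via Lefschetz, and the combinatorial endgame (that $\prod_{i\in I}\lambda_i=1$ for all $k$-subsets forces all $\lambda_i$ equal when $k\le n-1$) are all correct and match the paper's argument once one knows $\eta$ is diagonal. The genuine gap is in your final step. Injectivity of the differential $\beta\mapsto k\eta^{k-1}\wedge\beta$ does show that the fibre $\{\eta:\eta^k=\omega^k\}$ is discrete, but discreteness together with the list of \emph{diagonalizable} solutions does not determine the full fibre: nothing you have written excludes an isolated solution $\eta_0$ whose associated $T$ has a nontrivial Jordan block or a complex-conjugate eigenvalue pair. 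To finish along your line you would need either a direct argument that every solution has $T$ semisimple with real spectrum, or a degree count showing the complex fibre has exactly $k$ points, and neither follows from the immersion property alone.

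The paper sidesteps this by proving directly, by induction on $n$, that any $2$-form $\Omega$ with $c\,\Omega^k=\omega^k$ (for some scalar $c$) already satisfies $\Omega=\sum_i\lambda_i\,e^i\wedge f^i$ in the \emph{standard} basis (Theorem~\ref{thm:power}). The inductive step strips off the last symplectic pair, writing $\Omega=\Omega_0+e^{n+1}\alpha_0+f^{n+1}\beta_0+\lambda\,e^{n+1}f^{n+1}$, shows $\Omega_0$ is symplectic, and then uses Lefschetz for $\Omega_0$ to force $\alpha_0=\beta_0=0$; the remaining equation $\lambda c\,\Omega_0^{k-1}=\omega_0^{k-1}$ is handled by the inductive hypothesis (this is why the free constant $c$ is built into the statement). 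No spectral theory of $T$ is needed, and once $\eta$ is known to be diagonal your step~5 finishes the proof exactly as you wrote.
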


\begin{remark} \label{rmk:powers}
The statement of Theorem~\ref{thm:power-op} is likely well-known to experts, but seems difficult to find stated explicitly in the literature. Indeed, in~\cite[Section 2.3]{barron} it is implied that the result of Theorem~\ref{thm:power-op} was unknown. Similarly, although a global non-linear version of Theorem~\ref{thm:power-op} is proved in~\cite[Theorem 11]{alizadeh} characterizing those diffeomorphisms of a symplectic manifold $(M, \omega)$ which are connected to the identity and preserve $\omega^k$ as being themselves symplectomorphisms, no explicit mention of the corresponding linear statement is given in~\cite{alizadeh}. Although the proof in~\cite{alizadeh} also uses the Lefschetz decomposition as we do, it employs smooth methods rather than the induction argument we give, and as such says nothing about those diffeomorphisms preserving $\omega^k$ which are not connected to the identity. We use our Theorem~\ref{thm:power-op} to give a simple proof of~\cite[Theorem 11]{alizadeh} in Theorem~\ref{thm:alizadeh} below. An infinitesimal version of~\cite[Theorem 11]{alizadeh} is also proved in~\cite[Lemma 5.5]{RW}. (See also~\cite{MW}.)
\end{remark}

Reformulating the statement of the theorem, if we let \(\mathrm{Sp}_k(2n, \mathbb{R})\) denote the stabilizer of \(\omega^k\) in \(\mathrm{GL}(2n, \mathbb{R})\), then we have that
\[
 \mathrm{Sp}_k(2n, \mathbb{R}) = \begin{cases}
 \mathrm{Sp}(2n, \mathbb{R}), & \text{if \(k\) is odd and \(k \leq n - 1\),} \\
 \mathrm{Sp}(2n, \mathbb{R}) \cup \mathrm{Sp}^{-}(2n, \mathbb{R}), & \text{if \(k\) is even and \(k \leq n - 1\),} \\
 \mathrm{SL}(2n, \mathbb{R}), & \text{if \(k = n\),}
 \end{cases}
\]
where 
\[
 \mathrm{Sp}^-(2n, \mathbb{R}) = \{ A \in \mathrm{GL}(2n, \mathbb{R}) : A^*\omega = -\omega \}.
\]
Actually, we will prove the following stronger fact which implies Theorem~\ref{thm:power-op}.

\begin{theorem} \label{thm:power}
 Suppose that \(1 \leq k \leq n - 1\). Let \(\omega = \sum_{i=1}^n e^i \wedge f^i\) be the standard symplectic form on \(\mathbb{R}^{2n}\) and let \(\Omega\) be any \(2\)-form on \(\mathbb{R}^{2n}\) with constant coefficients such that \(c\Omega^k = \omega^k\) for some (necessarily non-zero) constant \(c\). Then, there exist scalars \(\lambda_1, \dots, \lambda_n\) such that \(\Omega = \sum_{i=1}^n \lambda_i \, e^i \wedge f^i\).
\end{theorem}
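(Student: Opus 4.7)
The plan is to reformulate $c\Omega^k = \omega^k$ as a collection of Pfaffian identities on submatrices. For any $2$-form $\Theta$ (viewed as a $2n \times 2n$ antisymmetric matrix in the standard basis) and any subset $S \subseteq \{e_1, f_1, \ldots, e_n, f_n\}$ with $|S| = 2k$, the coefficient of $\bigwedge_{s \in S}\theta^s$ in $\Theta^k$ equals $k!\,\mathrm{Pf}(\Theta|_S)$, where $\Theta|_S$ is the $2k \times 2k$ submatrix indexed by $S$. The hypothesis therefore unpacks as
\[
 c\,\mathrm{Pf}(\Omega|_S) = \mathrm{Pf}(\omega|_S) \qquad \text{for every } |S| = 2k,
\]
which is a mixture of normalizations (when $S$ is the union of $k$ symplectic pairs $\{e_j, f_j\}$, so $\mathrm{Pf}(\omega|_S) = 1$) and vanishing Pfaffian constraints (for all other $S$). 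I would then induct on $n$; the base case $n = 2$ only permits $k = 1$ and gives $\Omega = \omega/c$ directly.

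For the inductive step in the range $k \leq n - 2$, I would restrict $\Omega$ to each symplectic hyperplane $V_j := \{(x,y) : x_j = y_j = 0\} \cong \mathbb{R}^{2(n-1)}$. Since restriction commutes with the wedge product, $\Omega|_{V_j}$ satisfies $c(\Omega|_{V_j})^k = (\omega|_{V_j})^k$, and because $k \leq (n-1) - 1$ the inductive hypothesis applies, yielding $\Omega|_{V_j} = \sum_{i \neq j}\lambda_i^{(j)} e^i \wedge f^i$. Any off-diagonal coefficient $\Omega_{ab}\,\theta^a \wedge \theta^b$ of $\Omega$ is supported on two basis vectors with distinct symplectic indices; for $n \geq 3$ some symplectic pair $\{e_j, f_j\}$ is disjoint from $\{\theta^a, \theta^b\}$, so restricting to that $V_j$ forces $\Omega_{ab} = 0$, and $\Omega$ is diagonal.

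The extremal case $k = n - 1$ is the main obstacle, since restriction leaves $k$ outside the inductive range, and must be dispatched by a separate argument using the Pfaffian inverse formula. First, $\Omega$ must be invertible: a nonzero $v \in \ker \Omega$ would give $\iota_v \Omega^{n-1} = (n-1)\Omega^{n-2}\wedge\iota_v\Omega = 0$, whereas $\iota_v\omega^{n-1} = (n-1)\omega^{n-2}\wedge\iota_v\omega \neq 0$ because $\omega$ is non-degenerate and $\omega^{n-2}\wedge\colon V^* \to \Lambda^{2n-3}V^*$ is injective (every $1$-form is primitive, and $L^{n-2}$ does not annihilate nonzero primitives of degree $1$). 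I would then apply the classical Pfaffian inverse formula
\[
 (M^{-1})_{ab} = \frac{(-1)^{a+b}\,\mathrm{Pf}(M|_{V\setminus\{a,b\}})}{\mathrm{Pf}(M)} \qquad (a < b)
\]
to both $M = \Omega$ and $M = \omega$. The Pfaffian identities for $|S| = 2n - 2$ show that $(\Omega^{-1})_{ab}$ and $(\omega^{-1})_{ab}$ are in constant ratio $\mathrm{Pf}(\omega)/(c\,\mathrm{Pf}(\Omega))$ whenever $\{a,b\}$ is a symplectic pair, and both vanish otherwise. Hence $\Omega^{-1} = \lambda\,\omega^{-1}$ for a scalar $\lambda$, and inverting yields $\Omega = \lambda^{-1}\omega$, which is trivially of the required form $\sum_i \lambda_i e^i \wedge f^i$. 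The key point is that the Pfaffian inverse formula converts a system of quadratic Pfaffian equations into the linear statement $\Omega^{-1} \propto \omega^{-1}$, from which $\Omega \propto \omega$ is immediate.
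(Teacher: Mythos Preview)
Your proof is correct, but structurally quite different from the paper's. The paper runs a \emph{single} induction on $n$ that simultaneously drops the power: it decomposes $\Omega$ on $\mathbb{R}^{2(n+1)}$ as $\Omega_0 + e^{n+1}\alpha_0 + f^{n+1}\beta_0 + \lambda\,e^{n+1}f^{n+1}$, expands $c\Omega^k = \omega^k$ by grading in $e^{n+1},f^{n+1}$, and uses the Lefschetz isomorphism $L^{n-1}\colon \Lambda^1 \to \Lambda^{2n-1}$ (after first showing $\Omega_0$ is nondegenerate) to kill $\alpha_0$ and $\beta_0$. The residual equation is $\lambda c\,\Omega_0^{\,k-1} = \omega_0^{\,k-1}$ on $\mathbb{R}^{2n}$, to which the inductive hypothesis applies with exponent $k-1 \leq n-1$. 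This is why the constant $c$ appears in the statement: it absorbs the factor $\lambda$ so the hypothesis can be re-invoked. No case split on $k$ is needed.

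Your approach instead keeps $k$ fixed and reduces only $n$, which forces you to handle $k = n-1$ separately. The restriction argument for $k \leq n-2$ is clean, and the Pfaffian inverse formula for the top case is an elegant observation --- it is essentially Hodge duality, recognizing that $\Omega^{n-1}$ encodes $\Omega^{-1}$ up to a global scalar, whence $\Omega^{-1} \propto \omega^{-1}$ and so $\Omega \propto \omega$ directly. Your invertibility step (contract by $v$ and use injectivity of $L^{n-2}$ on $1$-forms) is the same Lefschetz input the paper uses, just deployed at a different stage. The paper's route is more uniform and explains the role of the auxiliary constant $c$; yours is more hands-on and gives a sharper conclusion ($\Omega \propto \omega$, not just diagonal) in the extremal case $k = n-1$ without further work.
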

The role of the constant \(c\) will become clear in the course of the proof.

Let us first prove that Theorem~\ref{thm:power-op} follows from Theorem~\ref{thm:power}. Given that \(A^*(\omega^k) = \omega^k\), let \(\Omega = A^*\omega\). Then \(\Omega^k = \omega^k\), so that \(\Omega = \sum_{i=1}^n \lambda_i \, e^i \wedge f^i\) for some scalars \(\lambda_1, \dots, \lambda_n\) by Theorem~\ref{thm:power}. Since
\[
 \Omega^k = k! \sum_{i_1 < \cdots < i_k} \lambda_{i_1} \cdots \lambda_{i_k} \, e^{i_1} \wedge f^{i_1} \wedge \cdots \wedge e^{i_k} \wedge f^{i_k},
\]
we see from the equation \(\Omega^k = \omega^k\) that the product of any \(k\) of \(\lambda_1, \dots, \lambda_n\) without repetition in the indices must equal \(1\). Since \(k \leq n - 1\), it follows that \(\lambda_1 = \cdots = \lambda_n\). For example,
\[
 \lambda_1 = \frac{\lambda_1 \lambda_3 \cdots \lambda_{k+1}}{\lambda_3 \cdots \lambda_{k+1}} = \frac{\lambda_2 \lambda_3 \cdots \lambda_{k+1}}{\lambda_3 \cdots \lambda_{k+1}} = \lambda_2.
\]
This shows that \(\lambda_i = \lambda_j\) for all \(i\) and \(j\). (If we had \(k = n\), then the only equation we would have would be \(\lambda_1 \cdots \lambda_n = 1\), which cannot be solved uniquely for any \(\lambda_i\).) Since \(\lambda_1^k = 1\), it follows that \(\lambda_1 = \cdots = \lambda_n = 1\) if \(k\) is odd and \(\pm 1\) if \(k\) is even.

To prove Theorem~\ref{thm:power}, we need the following lemma.
\begin{lemma} \label{lem:lefschetz}
 Let \((V, \omega)\) be a \(2n\)-dimensional real symplectic vector space. For each \(j = 1, \dots, n\), the map
 \[
 L^j \colon \Lambda^{n-j} V^* \to \Lambda^{n+j} V^*, \qquad \theta \mapsto \theta \wedge \omega^j
 \]
 is an isomorphism.
\end{lemma}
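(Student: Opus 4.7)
The plan is to realize $\Lambda^\bullet V^*$ as a finite-dimensional representation of $\mathfrak{sl}_2(\mathbb{R})$ in which the Lefschetz operator $L$ plays the role of the raising operator. The statement of the lemma then follows from the structure theory of $\mathfrak{sl}_2$-modules applied to weight spaces.

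To set up the $\mathfrak{sl}_2$-triple, I would fix a Darboux basis $e_1,\dots,e_n,f_1,\dots,f_n$ of $V$ so that $\omega=\sum_i e^i\wedge f^i$, and define three operators on $\Lambda^\bullet V^*$: the Lefschetz operator $L(\alpha)=\alpha\wedge\omega$, which raises degree by $2$; the dual Lefschetz operator $\Lambda$, given by contraction with the Poisson bivector $\sum_i f_i\wedge e_i$, which lowers degree by $2$; and $H$, which acts on $\Lambda^k V^*$ by multiplication by $k-n$. The first task is to verify the three commutation relations $[H,L]=2L$, $[H,\Lambda]=-2\Lambda$, and $[L,\Lambda]=H$. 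The first two are immediate from degree considerations; the third, which is the substance of the argument, can be established by a direct computation on monomials $e^I\wedge f^J$ using the graded Leibniz rule for interior products (i.e.\ $\iota_v(\alpha\wedge\beta)=\iota_v(\alpha)\wedge\beta+(-1)^{\deg\alpha}\alpha\wedge\iota_v(\beta)$).

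With the $\mathfrak{sl}_2$-module structure on $\Lambda^\bullet V^*$ in place, $\Lambda^{n-j}V^*$ is exactly the $H$-weight space of weight $-j$ and $\Lambda^{n+j}V^*$ is the weight space of weight $+j$. By complete reducibility of finite-dimensional $\mathfrak{sl}_2$-representations, $\Lambda^\bullet V^*$ decomposes as a direct sum of irreducible modules $V_m$, each with one-dimensional weight spaces at weights $-m,-m+2,\dots,m-2,m$. A standard $\mathfrak{sl}_2$ computation (using $[L,\Lambda]=H$) shows that in each such $V_m$ with $m\geq j$, the operator $L^j$ restricts to an isomorphism between the weight $-j$ and weight $+j$ spaces, while both weight spaces are zero when $m<j$. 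Summing over the isotypic components yields the claimed isomorphism $L^j\colon\Lambda^{n-j}V^*\to\Lambda^{n+j}V^*$.

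The main obstacle is the commutator identity $[L,\Lambda]=H$; everything else is formal once this is in hand. An alternative more elementary route would use the equality of dimensions $\binom{2n}{n-j}=\binom{2n}{n+j}$ together with a direct proof that $L^j$ is injective on a Darboux-adapted basis, but this typically reduces to a combinatorial identity of comparable difficulty, so I would proceed by the representation-theoretic route above.
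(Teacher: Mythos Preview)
Your approach is correct and is the standard one found in textbooks on K\"ahler geometry: establishing the $\mathfrak{sl}_2$-triple $(L,\Lambda,H)$ on $\Lambda^\bullet V^*$ and then invoking the structure theory of finite-dimensional $\mathfrak{sl}_2$-modules does indeed yield the lemma, and the identification of the weight spaces with $\Lambda^{n\pm j}V^*$ is exactly right.

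The paper, however, takes a much more elementary route. It proceeds by \emph{downward} induction on $j$, starting from the trivial case $j=n$ (where $L^n$ is multiplication of scalars by the volume form $\omega^n$). For the inductive step, assuming $L^j$ is an isomorphism, one shows $L^{j-1}$ is injective: if $\theta\wedge\omega^{j-1}=0$ for some $(n-j+1)$-form $\theta$, then also $\theta\wedge\omega^j=0$; contracting with an arbitrary $v\in V$ and using the graded Leibniz rule gives $(i_v\theta)\wedge\omega^j=0$ (the other term drops out since $\theta\wedge\omega^{j-1}=0$), and since $i_v\theta$ has degree $n-j$ the inductive hypothesis forces $i_v\theta=0$, hence $\theta=0$. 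This argument avoids representation theory entirely and uses only contraction and the Leibniz rule; it is shorter and self-contained, whereas your approach, while heavier, yields as a by-product the full Lefschetz decomposition into primitive pieces, which the paper does not need.
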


Note that it is essential for the lemma that we consider the map \(\theta \mapsto \theta \wedge \omega^j\) only on \(\Lambda^{n-j}V^*\), for \(\Lambda^{n-j}V^*\) and \(\Lambda^{n+j}V^*\) have the same dimension. To prove Theorem~\ref{thm:power}, we actually only need the statement of Lemma~\ref{lem:lefschetz} in the case that \(j = n - 1\), but the proof of the lemma for all \(j\) is nearly identical to the proof for \(j = n - 1\).
\begin{proof}[Proof of Lemma~\ref{lem:lefschetz}]
 We proceed by induction on \(j\), starting from \(j = n\) and proceeding backwards. The base case holds simply because \(L^n\) is multiplication on scalars by the volume form \(\omega^n\). Suppose that for some \(j \leq n\), the map \(L^j\) is an isomorphism. To prove that \(L^{j-1}\) is an isomorphism, it suffices to prove that it is injective, so suppose that \(\theta\) is a \((n-j+1)\)-form such that \(L^{j-1}(\theta) = 0\). It follows that
 \[
 0 = L^{j-1}(\theta) \wedge \omega = \theta \wedge \omega^j.
 \]
 We contract both sides of the above equation by an arbitrary \(v\) in \(V\) to obtain
 \[
 0 = (i_v\theta) \wedge \omega^j \pm j \theta \wedge (i_v \omega) \wedge \omega^{j-1}.
 \]
 Since \(\theta \wedge \omega^{j-1} = 0\), the second term on the right-hand side vanishes and we are left with \((i_v\theta) \wedge \omega^j = 0\). But \(i_v\theta\) is of degree \(n - j\), so this says that \(L^j(i_v\theta) = 0\). By the inductive hypothesis, \(i_v\theta = 0\), and since \(v\) was arbitrary, \(\theta = 0\).
\end{proof}

We are now ready to prove Theorem~\ref{thm:power}. In the following proof, we drop the wedge product symbols for brevity, and all forms are forms with constant coefficients.

\begin{proof}[Proof of Theorem~\ref{thm:power}]
We proceed by induction on \(n\). If \(n = 2\), then \(k = 1\) and \(c\Omega = \omega\). This implies that \(\Omega = c^{-1}\sum e^i f^i\), proving the base case. Now, suppose the theorem holds for a given \(n \geq 2\). Let \(\omega = \sum_{i=1}^{n+1} e^i f^i\) be the standard symplectic form on \(\mathbb{R}^{2(n+1)}\), and suppose that \(\Omega\) is a \(2\)-form on \(\mathbb{R}^{2(n+1)}\) such that \(c\Omega^k = \omega^k\) for some scalar \(c\) and some integer \(k\) with \(1 \leq k \leq (n + 1) - 1 = n\). If \(k = 1\), then we are immediately done as in the base case, so let us assume that \(k \geq 2\). Decompose \(\Omega\) into four components as follows:
 \[
 \Omega = \Omega_0 + \alpha + \beta + \gamma,
 \]
 where
 \begin{itemize}
 \item \(\Omega_0\) is a \(2\)-form without any \(e^{n+1}\) or \(f^{n+1}\) terms,
 
 \item \(\alpha = e^{n+1} \alpha_0\) for some \(1\)-form \(\alpha_0\) without any \(e^{n+1}\) or \(f^{n+1}\) terms,
 
 \item \(\beta = f^{n+1} \beta_0\) for some \(1\)-form \(\beta\) of the same type as \(\alpha_0\), and 

 \item \(\gamma = \lambda \, e^{n+1} f^{n+1}\) for some scalar \(\lambda\).
 \end{itemize}
 Note that \(\alpha^2 = \beta^2 = \gamma^2 = \alpha\gamma = \beta\gamma = 0\). It follows from this and the binomial theorem that
 \begin{align*}
 \Omega^k &= \Omega_0^k + k \Omega_0^{k-1}(\alpha + \beta + \gamma) + \frac{k(k-1)}{2} \Omega_0^{k-2}(\alpha + \beta + \gamma)^2 \\
 &= \Omega_0^k + k \Omega_0^{k-1}(\alpha + \beta + \gamma) + k(k - 1) \Omega_0^{k-2} \alpha\beta.
 \end{align*}
 If we similarly decompose \(\omega\) into \(\omega_0 + e^{n+1} f^{n+1}\), where \(\omega_0 = \sum_{i=1}^n e^i f^i\), then we obtain
 \[
 \omega^k = \omega_0^k + k \omega_0^{k-1} e^{n+1} f^{n+1}.
 \]
 Comparing the components of \(c\Omega^k\) and of \(\omega^k\) which have no \(e^{n+1}\) or \(f^{n+1}\), only one of the two, and both gives us the following four equations:
 \begin{align*}
 c\Omega_0^k &= \omega_0^k, \\
 \Omega_0^{k-1} \alpha_0 &= 0, \\
 \Omega_0^{k-1} \beta_0 &= 0, \\
 \lambda c\Omega_0^{k-1} - c(k - 1)\Omega_0^{k-2} \alpha_0 \beta_0 &= \omega_0^{k-1}.
 \end{align*}
 It should be emphasized that all of the forms in the above four equations only contain \(e^1, f^1, \dots, e^n, f^n\), and we think of them as forms on \(\mathbb{R}^{2n} = \mathrm{span}\{e_1, f_1, \dots, e_n, f_n\}\). 

 We first prove that \(\Omega_0\) is symplectic. Suppose that \(i_v\Omega_0 = 0\) for some \(v\) in \(\mathbb{R}^{2n}\). It follows that
 \[
 0 = i_v(c\Omega_0^k) = i_v(\omega_0^k) = k (i_v\omega_0) \omega_0^{k-1}.
 \]
 This implies that \((i_v\omega_0) \omega_0^{n-1} = 0\), and then it follows from Lemma~\ref{lem:lefschetz} that \(i_v\omega_0 = 0\). Since \(\omega_0\) is the standard symplectic form on \(\mathbb{R}^{2n}\), this forces \(v\) to be zero, hence \(\Omega_0\) is symplectic. From the equation \(\Omega_0^{k-1} \alpha_0 = 0\) it follows that \(\Omega_0^{n-1} \alpha_0 = 0\), and since \(\Omega_0\) is symplectic, Lemma~\ref{lem:lefschetz} implies that \(\alpha_0 = 0\). The same argument gives \(\beta_0 = 0\), which leaves us with the fourth equation,
 \[
 \lambda c \Omega_0^{k-1} = \omega_0^{k-1}.
 \]
 By the inductive hypothesis, \(\Omega_0 = \sum_{i=1}^n \lambda_i \, e^i f^i\) for some scalars \(\lambda_1, \dots, \lambda_n\), and we conclude that
 \[
 \Omega = \Omega_0 + \lambda \, e^{n+1} f^{n+1} = \lambda_1 \, e^1 f^1 + \cdots + \lambda_n \, e^n f^n + \lambda \, e^{n+1} f^{n+1}. \qedhere
 \]
\end{proof}

Note that if we had not included the coefficient \(c\) in the statement of Theorem~\ref{thm:power}, then we would have ended up with the equation \(\lambda \Omega_0^{k-1} = \omega_0^{k-1}\), to which the inductive hypothesis would not apply unless we were able to pull \(\lambda\) into the power of \(k - 1\).

\begin{proof}[Alternate proof of Theorem~\ref{thm:poweraffns}]
Using Theorem~\ref{thm:power-op}, we can give an alternate proof of Theorem~\ref{thm:poweraffns} by reducing it to the symplectic case. Suppose that \(\psi\) is an affine map which preserves \(\omega^k\) and takes \(B(r)\) into \(Z_k(R)\). We have already seen that if \(k = n\), then \(r \leq R\) due to the fact that \(\psi\) must be volume-preserving and \(Z_n(R) = B(R)\). Suppose that \(k < n\). In this case, it follows from Theorem~\ref{thm:power-op} that \(\psi\) must preserve \(\omega\) up to a sign. Since \(Z_k(R) \subseteq Z(R)\), we have that \(\psi\) is an affine symplectomorphism or anti-symplectomorphism taking \(B(r)\) into \(Z(R)\). By the classical affine non-squeezing theorem, we conclude that \(r \leq R\).
\end{proof}

In Section~\ref{section:2.2}, after we proved the affine non-squeezing theorem for \(\omega\), we showed that we can characterize symplectic and anti-symplectic linear maps as those maps preserving the linear symplectic widths of ellipsoids. Now that we have proven the affine non-squeezing theorem for \(\omega^k\), it is natural to try to do the same for maps preserving \(\omega^k\). As we will see below in Proposition~\ref{prop:kwidth}, it is a consequence of Theorem~\ref{thm:power-op} that the natural analogue of the linear symplectic width for \(\omega^k\) does not result in a new invariant of subsets of \(\mathbb{R}^{2n}\). In particular, the natural \(k\)-symplectic analogue of Theorem~\ref{thm:linchar} will reduce to the symplectic case when \(k \leq n - 1\).

We define the \emph{linear \(k\)-symplectic width} of a subset \(A\) of \(\mathbb{R}^{2n}\) by 
\[
 w_k(A) := \sup\{ \mathcal{V}_{2k} r^{2k} : \Psi(B(r)) \subseteq A \text{ for some affine map } \Psi \text{ preserving } \omega^k\},
\]
where \(\mathcal{V}_{2k}\) is the volume of the unit ball in \(\mathbb{R}^{2k}\). Note that the linear symplectic width \(w_L(A)\) is exactly \(w_1(A)\); more generally, we have the following expression relating the two.

\begin{proposition} \label{prop:kwidth}
 Suppose \(1 \leq k \leq n - 1\). For any subset \(A\) of \(\mathbb{R}^{2n}\), 
 \[
 w_k(A) = \frac{\mathcal{V}_{2k}}{\pi^k} \cdot [w_L(A)]^k.
 \]
\end{proposition}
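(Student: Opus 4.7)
The plan is to use Theorem~\ref{thm:power-op} to identify the affine $\omega^k$-preserving maps with the affine symplectomorphisms and anti-symplectomorphisms, and then to argue (as in the proof of Proposition~\ref{prop:antisymp}) that the anti-symplectomorphism case does not contribute any new admissible radii. Once the two suprema in $w_k(A)$ and $w_L(A)$ are taken over the same set of radii $r$, the desired formula drops out by comparing the two monomials $\omega_{2k} r^{2k}$ and $\pi r^2$.

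First, I would apply Theorem~\ref{thm:power-op} (which requires $1 \leq k \leq n-1$): a linear map preserves $\omega^k$ if and only if it preserves $\omega$ up to sign. This identification passes trivially to the affine level since translations preserve every form with constant coefficients. Hence the affine maps preserving $\omega^k$ are precisely the affine symplectomorphisms and affine anti-symplectomorphisms.

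Next, I would exchange anti-symplectomorphisms for symplectomorphisms at the level of ball embeddings, following the argument of Proposition~\ref{prop:antisymp}. If $\Psi$ is an affine anti-symplectomorphism with $\Psi(B(r)) \subseteq A$ and $T \colon (x, y) \mapsto (-x, y)$ is the linear anti-symplectomorphism fixing $B(r)$ setwise, then $\Psi \circ T$ is an affine symplectomorphism and satisfies $(\Psi \circ T)(B(r)) = \Psi(B(r)) \subseteq A$. Therefore the set
\[
 S(A) := \{r > 0 : \Psi(B(r)) \subseteq A \text{ for some affine, } \omega^k\text{-preserving } \Psi\}
\]
coincides with the corresponding set defined using only affine symplectomorphisms. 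Writing $r^\ast := \sup S(A)$, we obtain $w_L(A) = \pi (r^\ast)^2$ and $w_k(A) = \omega_{2k} (r^\ast)^{2k}$, and eliminating $r^\ast$ yields $w_k(A) = (\omega_{2k}/\pi^k) \, w_L(A)^k$.

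There is no genuine obstacle, since Theorem~\ref{thm:power-op} already encapsulates the real content. The only minor points---the passage from linear to affine maps, and the anti-symplectomorphism swap---are both routine and already built into the machinery established earlier in the paper.
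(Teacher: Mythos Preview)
Your proposal is correct and takes essentially the same approach as the paper: both arguments reduce to identifying the affine $\omega^k$-preserving maps with the affine (anti-)symplectomorphisms via Theorem~\ref{thm:power-op}, and then eliminate the anti-symplectic case using the reflection trick of Proposition~\ref{prop:antisymp}. The only cosmetic difference is that you package the argument as an equality of admissible-radius sets $S(A)$, whereas the paper phrases it as a pair of inequalities $w_k(A) \geq \frac{\omega_{2k}}{\pi^k}[w_L(A)]^k$ and $w_k(A) \leq \frac{\omega_{2k}}{\pi^k}[w_L(A)]^k$.
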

\begin{proof}
 If \(B(r)\) embeds into \(A\) by an \(\omega\)-preserving affine map, then it certainly embeds into \(A\) by an \(\omega^k\)-preserving affine map, so
 \[
 w_k(A) \geq \mathcal{V}_{2k} r^{2k} = \frac{\mathcal{V}_{2k}}{\pi^k} \cdot [\pi r^2]^k.
 \]
 It follows that \(w_k(A) \geq \frac{\mathcal{V}_{2k}}{\pi^k} \cdot [w_L(A)]^k\). Conversely, if \(B(r)\) embeds into \(A\) by an \(\omega^k\)-preserving affine map, then that affine map is either symplectic or anti-symplectic by Theorem~\ref{thm:power-op}, so that (recalling Proposition~\ref{prop:antisymp}), we have 
 \[
 \frac{\mathcal{V}_{2k}}{\pi^k} \cdot [w_L(A)]^k \geq \frac{\mathcal{V}_{2k}}{\pi^k} \cdot [\pi r^2]^k = \mathcal{V}_{2k} r^{2k}.
 \]
 It follows that \(\frac{\mathcal{V}_{2k}}{\pi^k} \cdot [w_L(A)]^k \geq w_k(A)\), so in fact equality must hold.
\end{proof}

\subsection{Applications to non-linear \texorpdfstring{\(\omega^k\)}{omega k}-preserving maps}
\label{section:3.3}

We turn to studying \(k\)-symplectic analogues of the results of Section~\ref{section:2.4}. We begin by noting that the application of Theorem~\ref{thm:power-op} used to prove the affine non-squeezing theorem for \(\omega^k\) by reduction to the symplectic case readily generalizes to prove the following \emph{non-linear} non-squeezing theorem for \(\omega^k\).

\begin{proposition}
 Suppose \(1 \leq k \leq n\). If there exists an \(\omega^k\)-preserving embedding of \(B(r)\) into \(Z_k(R)\), then \(r \leq R\).
\end{proposition}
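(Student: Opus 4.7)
The plan is to reduce the claim to Gromov's non-squeezing theorem (Theorem~\ref{thm:gns}) by upgrading the pointwise linear statement Theorem~\ref{thm:power-op} to a global statement about the smooth embedding $\psi \colon B(r) \to Z_k(R)$ satisfying $\psi^{\ast}(\omega^k) = \omega^k$.

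First I would dispose of the boundary case $k = n$. Here the hypothesis says that $\psi$ preserves the volume form $\omega^n$, and $Z_n(R) = B(R)$ is literally the ball of radius $R$. Since $\psi(B(r))$ is an open subset of $B(R)$ with the same volume as $B(r)$, this gives $\mathrm{vol}(B(r)) \leq \mathrm{vol}(B(R))$ and therefore $r \leq R$.

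Assume now that $1 \leq k \leq n - 1$. At every interior point $p$ of $B(r)$, the differential $d\psi_p$ is a linear automorphism of $\mathbb{R}^{2n}$ satisfying $(d\psi_p)^{\ast}(\omega^k) = \omega^k$, so Theorem~\ref{thm:power-op} applies and yields $(d\psi_p)^{\ast}\omega = \varepsilon(p)\,\omega$ for some $\varepsilon(p) \in \{+1,-1\}$. The assignment $p \mapsto \varepsilon(p)$ is continuous and takes values in a discrete set, so by the connectedness of $B(r)$ it is globally constant. Thus $\psi$ is either a symplectic embedding or an anti-symplectic embedding of $B(r)$ into $Z_k(R)$.

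Finally, since $Z_k(R) \subseteq Z(R)$, the map $\psi$ also embeds $B(r)$ into the symplectic cylinder $Z(R)$. In the symplectic case, Theorem~\ref{thm:gns} gives $r \leq R$ immediately. In the anti-symplectic case, I would precompose with the involution $T(x,y) = (y,x)$, which satisfies $T^{\ast}\omega = -\omega$ and $T(B(r)) = B(r)$; then $\psi \circ T$ is a symplectic embedding of $B(r)$ into $Z(R)$, and Theorem~\ref{thm:gns} again yields $r \leq R$. The only genuinely new ingredient beyond Theorem~\ref{thm:power-op} is the connectedness argument globalizing the pointwise sign, so I do not anticipate any serious technical obstacle.
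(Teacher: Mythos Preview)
Your proposal is correct and follows essentially the same route as the paper: apply Theorem~\ref{thm:power-op} pointwise to the derivative, use connectedness to globalize the sign, note $Z_k(R) \subseteq Z(R)$, and invoke Gromov's non-squeezing. You are simply more explicit than the paper about the $k=n$ volume argument, the connectedness step, and the anti-symplectic reduction via precomposition with an involution of the ball; the paper compresses all of this into two sentences.
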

\begin{proof}
 Suppose \(k \leq n - 1\). Any \(\omega^k\)-preserving embedding is either symplectic or anti-symplectic by Theorem~\ref{thm:power-op} applied pointwise to the derivatives of the map. Since \(Z_k(R) \subseteq Z(R)\), it follows from Gromov's non-squeezing theorem (Theorem~\ref{thm:gns}) that \(r \leq R\). For \(k = n\), the assumption is that \(B(r)\) admits a volume-preserving embedding into \(Z_n(R) = B(R)\), so the conclusion \(r \leq R\) is clear.
\end{proof}

Just as we did at the end of the previous section, we define the non-linear \(k\)-symplectic width \(w_k(A)\) of a subset of \(\mathbb{R}^{2n}\) by
\[
 w_k(A) := \sup\{\mathcal{V}_{2k}r^{2k} : \psi(B(r)) \subseteq A \text{ for some \(\omega^k\)-preserving embedding } \psi \colon B(r) \to \mathbb{R}^{2n}\}.
\]
The same formula as in Proposition~\ref{prop:kwidth} that related the linear \(k\)-symplectic width to the linear symplectic width holds for the non-linear \(k\)-symplectic width and the Gromov width, by the same proof. We can also define the ``maximal \(k\)-capacity'' 
\[
 \overline{w_k}(A) := \inf\{\mathcal{V}_{2k} R^{2k} : \psi(A) \subseteq Z_k(R) \text{ for some \(\omega^k\)-preserving embedding } \psi \colon A \to \mathbb{R}^{2n}\}.
\]
The maximal \(k\)-capacity has the following relation to the classical maximal capacity \(\overline{w}\).

\begin{proposition} \label{prop:maxcapacity}
 Suppose \(1 \leq k \leq n - 1\). For any subset \(A\) of \(\mathbb{R}^{2n}\), we have
 \[
  \overline{w}_k(A) \geq \frac{\mathcal{V}_{2k}}{\pi^k} \cdot [\overline{w}(A)]^k.
 \]
\end{proposition}
\begin{proof}
 If \(A\) embeds into \(Z_k(R)\) by an \(\omega^k\)-preserving embedding, then it embeds into \(Z(R)\) by such an embedding since \(Z_k(R) \subseteq Z(R)\). Any \(\omega^k\)-preserving map is pointwise symplectic or antisymplectic by Theorem~\ref{thm:power-op}, so it follows that 
 \[
    \frac{\mathcal{V}_{2k}}{\pi^k} \cdot [\overline{w}(A)]^k \leq \frac{\mathcal{V}_{2k}}{\pi^k} \cdot [\pi R^2]^k = \mathcal{V}_{2k} R^{2k}.
 \]
 Taking infimums gives the desired inequality.
\end{proof}
It is not clear whether we can establish equality in general. Any \(\omega\)-preserving embedding into \(Z(R)\) is certainly an \(\omega^k\)-preserving embedding into \(Z(R)\), but it is not clear how to go from this to an embedding into the \(k\)-cylinder \(Z_k(R)\).


The proof of Theorem~\ref{thm:nonlinchar} relied on Theorem~\ref{thm:limit}, whose proof required showing that a particular linear map preserved the maximal capacity of every ellipsoid and the fact that every normalized capacity takes the same value on any given ellipsoid (Theorem~\ref{thm:ellcap}). It is not clear whether this holds for the maximal \(k\)-capacity \(\overline{w_k}\).

On the other hand, by using Theorem~\ref{thm:power-op}, it is easy to establish (by reduction to the symplectic case) a \(k\)-symplectic analogue of Theorem~\ref{thm:c0closure}. Given a symplectic manifold \((M, \omega)\), we let \(\mathrm{Symp}(M, \omega)\) denote the group of self-diffeomorphisms of \(M\) preserving \(\omega\), and we also let \(\mathrm{Symp}_k(M, \omega)\) denote the group of self-diffeomorphisms preserving \(\omega^k\).

\begin{proposition}
 For any symplectic \(2n\)-manifold \((M, \omega)\) and any integer \(k\) with \(1 \leq k \leq n - 1\), the group \(\mathrm{Symp}_k(M, \omega)\) is closed in \(\mathrm{Diff}(M)\) with respect to the \(C^0\)-compact-open topology.
\end{proposition}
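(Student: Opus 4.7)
The plan is to reduce this to the classical Eliashberg--Gromov theorem (Theorem~\ref{thm:c0closure}) via Theorem~\ref{thm:power-op}. Applying the latter pointwise to the derivative of any $\psi \in \mathrm{Symp}_k(M, \omega)$ shows that at every $x \in M$ we have $(\psi^*\omega)_x = \pm\omega_x$, with only the $+$ sign allowed when $k$ is odd. Since $\psi^*\omega$ is continuous and the sign takes values in the discrete set $\{+1,-1\}$, it is locally constant and hence constant on each connected component of $M$. If $k$ is odd this already gives $\mathrm{Symp}_k(M, \omega) = \mathrm{Symp}(M, \omega)$, and the result follows directly from Theorem~\ref{thm:c0closure}.

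For $k$ even I would first assume $M$ is connected; the general case reduces to this by working on each component, noting that a diffeomorphism of $M$ permutes the components and one can pass to a subsequence along which both the destination component and the sign on each source component are constant. Setting $\mathrm{Symp}^-(M, \omega) := \{\psi \in \mathrm{Diff}(M) : \psi^*\omega = -\omega\}$, the pointwise argument yields the disjoint decomposition
\[
 \mathrm{Symp}_k(M, \omega) = \mathrm{Symp}(M, \omega) \cup \mathrm{Symp}^-(M, \omega),
\]
so it suffices to show that each summand is $C^0$-closed in $\mathrm{Diff}(M)$.

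The first summand is closed by Theorem~\ref{thm:c0closure}. For the second, suppose $(\psi_n) \subset \mathrm{Symp}^-(M, \omega)$ converges uniformly on compact sets to some $\varphi \in \mathrm{Diff}(M)$; there is nothing to prove if $\mathrm{Symp}^-(M, \omega)$ is empty, so fix any $\psi_0 \in \mathrm{Symp}^-(M, \omega)$. The composites $f_n := \psi_0^{-1} \circ \psi_n$ lie in $\mathrm{Symp}(M, \omega)$, being compositions of two anti-symplectomorphisms. Left-composition with the fixed continuous map $\psi_0^{-1}$ preserves uniform convergence on compact sets, because on any compact $K \subset M$ the images $\psi_n(K)$ remain in a common compact set and $\psi_0^{-1}$ is uniformly continuous there. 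Thus $f_n \to \psi_0^{-1} \circ \varphi$ in the $C^0$-compact-open topology, and Theorem~\ref{thm:c0closure} forces $\psi_0^{-1} \circ \varphi$ to be a symplectomorphism. Consequently $\varphi = \psi_0 \circ (\psi_0^{-1} \circ \varphi)$ satisfies $\varphi^*\omega = -\omega$, so $\varphi \in \mathrm{Symp}^-(M, \omega)$.

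The proof is essentially routine once the two ingredients (Theorem~\ref{thm:power-op} and Theorem~\ref{thm:c0closure}) are in hand; there is no substantive obstacle. The only minor bookkeeping concerns stabilizing the sign by passing to a subsequence and, when $M$ is disconnected, tracking how the components are permuted.
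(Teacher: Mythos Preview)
Your argument is correct and reaches the same conclusion as the paper, but by a somewhat different route in the even-$k$ case. Both proofs start the same way, using Theorem~\ref{thm:power-op} pointwise to see that any $\omega^k$-preserving diffeomorphism is $\pm$-symplectic, and both dispose of the odd case immediately. For even $k$, however, the paper does not decompose $\mathrm{Symp}_k(M,\omega)$ globally. Instead it localizes via Darboux to a sequence of $\omega^k$-preserving embeddings of a ball, observes that each such map (being symplectic or anti-symplectic) preserves the Gromov width of every ellipsoid, and then invokes Theorem~\ref{thm:limit} directly to conclude that the limit's derivative is $\pm$-symplectic, hence $\omega^k$-preserving.

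Your approach is in some ways cleaner: you treat Theorem~\ref{thm:c0closure} as a black box and reduce the anti-symplectic case to the symplectic one by left-composing with a fixed $\psi_0 \in \mathrm{Symp}^-(M,\omega)$. This avoids re-entering the capacity machinery of Theorem~\ref{thm:limit}. The trade-off is that your global decomposition $\mathrm{Symp}_k = \mathrm{Symp} \cup \mathrm{Symp}^-$ only holds component-by-component, so you incur some bookkeeping for disconnected $M$ (stabilizing the permutation of components and the sign on each). The paper's local approach via Darboux charts handles connected and disconnected $M$ uniformly, since the conclusion $\varphi^*\omega^k = \omega^k$ is checked pointwise. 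Your sketch of the disconnected case is adequate but would benefit from noting explicitly that the condition to be verified is local, so one can simply fix a point, pass to its component, and observe that for large $n$ the $\psi_n$ send that component to the same target as $\varphi$.
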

\begin{proof}
 If \(k\) is odd, then we clearly have that \(\mathrm{Symp}_k(M, \omega) = \mathrm{Symp}(M, \omega)\) by Darboux's theorem and Theorem~\ref{thm:power-op}, so the result reduces to Theorem~\ref{thm:c0closure} in this case. 
 
 Let us assume that \(k\) is even. Just as in the proof of the classical version of the theorem, we may assume by Darboux's theorem that we have a sequence \((\varphi_j)\) of \(\omega^k\)-preserving embeddings of the ball \(B(\rho)\) into \(\mathbb{R}^{2n}\) which converges uniformly to a smooth embedding \(\varphi \colon B(\rho) \to \mathbb{R}^{2n}\) and that we need to prove that \(\varphi'(0)\) preserves \(\omega^k\). By Theorem~\ref{thm:power-op}, each \(\varphi_j\) is either symplectic or anti-symplectic. In particular, each \(\varphi_j\) preserves the Gromov width of every ellipsoid centered at the origin in \(B(\rho)\). Therefore, by Theorem~\ref{thm:limit}, it follows that \(\varphi'(0)\) is symplectic or anti-symplectic. Since \(k\) is even, it follows that \(\varphi'(0)\) preserves \(\omega^k\).
\end{proof}

If \(M\) is not connected and if \(k\) is even, then an \(\omega^k\)-preserving map could be symplectic on one component and anti-symplectic on another, so the description of \(\mathrm{Symp}_k(M, \omega)\) in general is more complicated. But when \(M\) is connected, it is easy to describe the groups \(\mathrm{Symp}_k(M, \omega)\) for \(1 \leq k \leq n - 1\).

\begin{theorem} \label{thm:powermfd}
 If \((M, \omega)\) is a connected symplectic \(2n\)-manifold, then
 \[
 \mathrm{Symp}_k(M, \omega) = 
 \begin{cases}
 \mathrm{Symp}(M, \omega), & \text{if \(k\) is odd and \(k \leq n - 1\),} \\
 \mathrm{Symp}(M, \omega) \cup \mathrm{Symp}^-(M, \omega), & \text{if \(k\) is even and \(k \leq n - 1\),}
 \end{cases}
 \]
 where \(\mathrm{Symp}^-(M, \omega)\) is the group of anti-symplectomorphisms.
\end{theorem}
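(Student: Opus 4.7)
The plan is to reduce the theorem to a pointwise application of Theorem~\ref{thm:power-op}, with connectedness of $M$ used to globalize the sign in the even case. The easy inclusion $\supseteq$ is immediate: for $\phi \in \mathrm{Symp}(M,\omega)$ we have $\phi^*(\omega^k) = (\phi^*\omega)^k = \omega^k$, and for $\phi \in \mathrm{Symp}^-(M,\omega)$ with $k$ even we have $\phi^*(\omega^k) = (-\omega)^k = \omega^k$. So the content is in the reverse inclusion.

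For that, fix $\phi \in \mathrm{Symp}_k(M,\omega)$ and a point $p \in M$. The derivative $d\phi_p \colon T_pM \to T_{\phi(p)}M$ satisfies $(d\phi_p)^*\omega_{\phi(p)}^k = \omega_p^k$. Choosing symplectic bases for $(T_pM, \omega_p)$ and $(T_{\phi(p)}M, \omega_{\phi(p)})$ identifies both tangent spaces with $(\mathbb{R}^{2n}, \omega)$ and expresses $d\phi_p$ as an element of $\mathrm{GL}(2n,\mathbb{R})$ preserving $\omega^k$. Since $k \leq n - 1$, Theorem~\ref{thm:power-op} applies: if $k$ is odd then $(d\phi_p)^*\omega_{\phi(p)} = \omega_p$, and if $k$ is even then $(d\phi_p)^*\omega_{\phi(p)} = \epsilon(p)\,\omega_p$ for some $\epsilon(p) \in \{+1, -1\}$. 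In the odd case this immediately yields $\phi^*\omega = \omega$, proving $\phi \in \mathrm{Symp}(M,\omega)$ as desired.

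In the even case, it remains to show that the sign function $\epsilon \colon M \to \{-1,+1\}$ is constant. I would argue this is locally constant and then invoke connectedness. Concretely, the $2$-form $\phi^*\omega$ is smooth, and near any $p_0 \in M$ one can choose smooth local vector fields $X, Y$ with $\omega(X,Y)$ nowhere vanishing on a neighbourhood $U$ of $p_0$; then on $U$
\[
\epsilon = \frac{(\phi^*\omega)(X,Y)}{\omega(X,Y)}
\]
is continuous, hence locally constant (as it takes only two values), and thus globally constant since $M$ is connected. Therefore $\phi^*\omega = \omega$ or $\phi^*\omega = -\omega$, placing $\phi$ in $\mathrm{Symp}(M,\omega) \cup \mathrm{Symp}^-(M,\omega)$.

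No step of this strategy is genuinely hard once Theorem~\ref{thm:power-op} is in hand; the mild technicality is the locally constant argument for $\epsilon$, where one must remember to use that $\omega_p$ is nowhere zero (by non-degeneracy) to justify the pointwise division, and to use connectedness of $M$ to promote locally constant to globally constant. The hypothesis $k \leq n-1$ is essential precisely because it is the hypothesis of Theorem~\ref{thm:power-op}; for $k = n$ the stabilizer is all of $\mathrm{SL}(2n,\mathbb{R})$ and no analogous rigidity is available.
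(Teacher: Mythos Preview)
Your proposal is correct and follows essentially the same route as the paper: reduce pointwise to Theorem~\ref{thm:power-op} and then use connectedness to fix the sign in the even case. The paper's proof is a single line (``Apply the Darboux theorem and then Theorem~\ref{thm:power-op}''), leaving implicit the locally-constant/connectedness argument you spell out; your use of symplectic bases on tangent spaces in place of full Darboux charts is a harmless and slightly more elementary variant.
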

\begin{proof}
 Apply the Darboux theorem and then Theorem~\ref{thm:power-op}.
\end{proof}

We conclude this section with an application to some recent results found in~\cite{alizadeh} on the groups \(\mathrm{Symp}_k(M, \omega)\). By using Theorem~\ref{thm:powermfd}, we are able to give a quick proof of the main theorem of~\cite{alizadeh}.

\begin{theorem}[{\cite[Theorem 11]{alizadeh}}] \label{thm:alizadeh}
 For any symplectic \(2n\)-manifold \((M, \omega)\) and any integer \(k\) with \(1 \leq k \leq n - 1\), the identity component of \(\mathrm{Symp}_k(M, \omega)\) is equal to the identity component of \(\mathrm{Symp}(M, \omega)\).
\end{theorem}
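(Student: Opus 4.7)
The plan is to deduce Theorem~\ref{thm:alizadeh} directly from Theorem~\ref{thm:powermfd}, which already does most of the work. When $k$ is odd, Theorem~\ref{thm:powermfd} gives $\mathrm{Symp}_k(M,\omega) = \mathrm{Symp}(M,\omega)$ at the level of the full groups on each connected component, so the statement is immediate. The only real work is in handling the case when $k$ is even, where the two groups differ by the inclusion of anti-symplectomorphisms.

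First I would reduce to the case where $M$ is connected. Any element of the identity component of $\mathrm{Symp}_k(M,\omega)$ must fix each connected component of $M$ setwise, since a continuous path from the identity cannot permute the discrete set of components of $M$. Hence restriction realizes the identity component of $\mathrm{Symp}_k(M,\omega)$ inside the product over connected components of $M$ of the corresponding identity components, and likewise for $\mathrm{Symp}(M,\omega)$, so the problem reduces to the connected case.

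Assuming $M$ is connected, Theorem~\ref{thm:powermfd} gives $\mathrm{Symp}_k(M,\omega) = \mathrm{Symp}(M,\omega) \cup \mathrm{Symp}^-(M,\omega)$. Since $\omega$ is nowhere-vanishing, $\omega \neq -\omega$, and the two pieces are disjoint. The key observation is then: for any continuous path $\phi_t$ from the identity in $\mathrm{Symp}_k(M,\omega)$, the pullback $\phi_t^*\omega$ depends continuously on $t$ (with respect to, say, the $C^\infty$ topology on forms) and takes values in the two-point discrete set $\{\omega, -\omega\}$, so it is constant. Because $\phi_0 = \mathrm{id}$ forces $\phi_0^*\omega = \omega$, we get $\phi_t^*\omega = \omega$ for every $t$, placing the entire path in $\mathrm{Symp}(M,\omega)$. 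Thus the identity component of $\mathrm{Symp}_k(M,\omega)$ is contained in (the identity component of) $\mathrm{Symp}(M,\omega)$, and the reverse containment is immediate from $\mathrm{Symp}(M,\omega) \subseteq \mathrm{Symp}_k(M,\omega)$.

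There is no substantive obstacle, since the real content of the argument is packaged in Theorem~\ref{thm:power-op} and its global consequence Theorem~\ref{thm:powermfd}. The only point warranting a brief remark is that one should use a topology on $\mathrm{Diff}(M)$ for which pullback of differential forms is continuous (the $C^\infty$-compact-open topology is more than enough), which is standard and matches the topology implicitly used throughout Section~\ref{sec:powers}.
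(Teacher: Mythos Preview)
Your proposal is correct and follows essentially the same approach as the paper: reduce to connected components, apply Theorem~\ref{thm:powermfd} to see that each $\omega^k$-preserving diffeomorphism is either symplectic or anti-symplectic, and then use that a path starting at the identity cannot jump from the symplectic to the anti-symplectic piece. The paper's version is terser (it omits your explicit continuity argument for $\phi_t^*\omega$), but the logic is the same.
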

\begin{proof}
 Suppose that \(f\) is an \(\omega^k\)-preserving self-diffeomorphism of \(M\) that is connected to the identity by a path of such maps. Then \(f\) restricts to a self-diffeomorphism of each component of \(M\) which preserves \(\omega^k\), so, by Theorem~\ref{thm:powermfd}, we see that \(f\) is either symplectic or anti-symplectic on each component. Since \(f\) is connected to the identity, which is a symplectomorphism, it must be a symplectomorphism itself.
\end{proof}

\section{(Affine) non-squeezing for general calibrations} \label{sec:calibrations}

We aim to generalize the affine non-squeezing theorem for the symplectic form \(\omega\) and its powers \(\omega^k\) to a more general class of forms known as \emph{calibrations}. In this section, we define \emph{calibrations} and discuss when analogues of the affine non-squeezing theorems (Theorems~\ref{thm:affns} and~\ref{thm:poweraffns}) hold for them.

\subsection{When does a calibration satisfy the non-squeezing theorem?}
\label{section:4.1}

We are interested in developing invariants for \emph{calibrated geometries} which are analogous to invariants from symplectic geometry. So, it is natural to investigate how much of the theory of Sections~\ref{sec:classical} and~\ref{sec:powers} can be generalized; in particular, whether the non-squeezing theorems generalize and whether calibrated geometries support an interesting theory of capacities. We begin by exploring this question in the affine case, as we did in Sections~\ref{sec:classical} and~\ref{sec:powers}.

The proof of the affine non-squeezing theorem for \(\omega^k\) (Theorem~\ref{thm:poweraffns}) essentially depended on the following two facts about the form \(\omega^k\):
\begin{enumerate}
 \item The group of linear maps preserving \(\omega^k\) is closed under transposition (which follows from Theorem~\ref{thm:power-op}).

 \item The Wirtinger inequality (Theorem~\ref{thm:wirt}). (In the proof of the affine non-squeezing theorem for \(\omega\), Theorem~\ref{thm:affns}, we proved Wirtinger's inequality in the case \(k = 1\).)
\end{enumerate}
The Wirtinger inequality, in more general terms, states that the form \(\omega^k / k!\) is a \emph{calibration} on \(\mathbb{R}^{2n}\) whose \emph{calibrated subspaces} are exactly the \(2k\)-dimensional complex subspaces. 

Let us review the notion of a calibration and some of the basic theory. We say that a linear \(k\)-form \(\alpha\) on an inner product space \((V, g)\) is a \emph{calibration} if it has \emph{comass one}. That is, if 
\begin{equation}
 \label{eq:4.1}
 |\alpha(v_1, \dots, v_k)| \leq 1 \qquad \text{for all orthonormal } v_1, \dots, v_k \in V,
\end{equation}
and if equality holds for at least one \(k\)-tuple \((v_1, \dots, v_k)\). We say that a \(k\)-dimensional \emph{oriented} subspace \(L\) of \(V\) is \(\alpha\)-\emph{calibrated} (or just \emph{calibrated} if the form is clear from the context) if there is an oriented orthonormal basis \(v_1, \dots, v_k\) of \(L\) such that \(\alpha(v_1, \dots, v_k) = 1\). Note that if this is true for one such basis, then it is true for every such basis; more succinctly, \(\alpha|_L = \mathrm{vol}_L\). The application of Gram-Schmidt used to deduce Theorem~\ref{thm:poweraffns} from the Wirtinger inequality generalizes to show that, in fact, \(\alpha\) is a calibration if and only if
\[
 |\alpha(v_1, \dots, v_k)| \leq |v_1| \cdots |v_k| \qquad \text{for all } v_1, \dots, v_k \in V,
\]
with equality for at least one \((v_1, \dots, v_k)\).

More generally, calibrations can be defined on Riemannian manifolds. A \emph{calibration} on a Riemannian manifold \((M, g)\) is defined to be a \emph{closed} \(k\)-form \(\alpha\) such that for each \(x\) in \(M\), the linear \(k\)-form \(\alpha_x\) on \(T_x M\) has comass one. A \(k\)-dimensional \emph{oriented} submanifold \(L\) of \(M\) is said to be \emph{\(\alpha\)-calibrated} if \(T_x L\) is \(\alpha_x\)-calibrated for each \(x\) in \(L\). 

We will not make use of this, but one motivation for studying calibrations comes from the fact that calibrated submanifolds are minimal submanifolds (\cite[Theorem 4.2]{harlaw}), and in fact that are locally volume-minimizing. The equations for a calibrated submanifold are first-order fully nonlinear, whereas the minimal submanifold equations are second-order quasilinear.

Before trying to generalize the affine non-squeezing theorem, we discuss some examples of calibrations. Most of our examples come from the theory of Riemannian manifolds with special holonomy.
\begin{enumerate}
 \item As we have seen, the Wirtinger inequality implies that each \(\omega^k/k!\) on \(\mathbb{R}^{2n}\) for \(k = 1, \dots, n\) is a calibration whose calibrated subspaces are the \(2k\)-dimensional complex subspaces. It follows that the calibrated \emph{submanifolds} are the complex \(2k\)-dimensional submanifolds. (This corresponds to K\"ahler geometry.)

 \item The \(3\)-form $\varphi$ on \(\mathbb{R}^7\) associated to \(\mathrm{G}_2\)-geometry, as well as its Hodge dual \(4\)-form $\psi$, are both calibrations on \(\mathbb{R}^7\) whose calibrated subspaces are called \emph{associative} and \emph{coassociative}, respectively. (We refer to~\cite[Definition 3.14]{spiro-g2} for the definitions of these two forms.) We will return to this example in Section~\ref{section:4.2}. Similarly, the fundamental \(4\)-form \(\Phi\) on \(\mathbb{R}^8\) associated to \(\mathrm{Spin}(7)\)-geometry is a calibration whose calibrated subspaces are called \emph{Cayley}.

 \item Consider the holomorphic volume form \(\Omega = dz^1 \wedge \cdots \wedge dz^n\) on \(\mathbb{C}^n = \mathbb{R}^{2n}\). The real part of \(\Omega\) is a calibration whose calibrated submanifolds are said to be \emph{special Lagrangian submanifolds}. (More generally, \(\mathrm{Re}(e^{i\theta}\Omega)\) is a calibration for every \(\theta \in \mathbb{R}\).) The question of (affine) non-squeezing for this calibrated geometry will be the focus of Section~\ref{sec:slag}.

 \item The form \(e^1 \wedge \cdots \wedge e^k\) on \(\mathbb{R}^n\), where \(e_1, \dots, e_n\) is any orthonormal basis with dual basis \(e^1, \dots, e^n\), is a calibration whose only calibrated subspace is the span of \(e_1, \dots, e_k\). (This is a straightforward consequence of Cauchy-Schwarz on \(\Lambda^k \mathbb{R}^n\).)
\end{enumerate}

In the remainder of this section, we focus on constant coefficient calibrations on \(\mathbb{R}^n\) and generalizing the affine non-squeezing theorem. To formulate the non-squeezing theorem for a given calibration, we need to replace the symplectic cylinder with a ``calibrated cylinder'' which we now define. For a given calibration \(\alpha\) on \(\mathbb{R}^n\) and a given \(\alpha\)-calibrated subspace \(L\), we define the corresponding calibrated cylinder of radius \(R\) by
\begin{equation} \label{eq:calib-cylinder}
 Z_L(R) = \{x \in \mathbb{R}^n : | \mathrm{proj}_L(x) |^2 \leq R^2\}.
\end{equation}
The symplectic \(k\)-cylinder \(Z_k(R)\) is the calibrated cylinder for the calibration \(\omega^k/k!\) with respect to the calibrated subspace spanned by \(e_1, f_1, \dots, e_k, f_k\).

\begin{remark} \label{rmk:cylinders}
 The adjective ``calibrated'' in ``calibrated cylinder'' refers to the given calibrated subspace \(L\) of \(\mathbb{R}^n\) on which the cylinder is based. We emphasize that a calibrated cylinder in \(\mathbb{R}^n\) with respect to the calibration \(k\)-form \(\alpha\) is an \(n\)-dimensional submanifold with boundary in \(\mathbb{R}^n\) given by~\eqref{eq:calib-cylinder}. It should not be confused with an \(\alpha\)-calibrated submanifold, which is \(k\)-dimensional.   
\end{remark}

The fact that \(\omega^k/k!\) had comass one was one of the ingredients we needed to prove the affine non-squeezing theorem in Section~\ref{sec:powers}. The other was the fact that its stabilizer is closed under transposition. As we now prove, if we assume that this holds for some given calibration, then that calibration satisfies the following affine non-squeezing theorem. The proof proceeds along the exact same lines as the proofs of the affine non-squeezing theorems in Sections~\ref{sec:classical} and~\ref{sec:powers} (Theorems~\ref{thm:affns} and~\ref{thm:poweraffns}, respectively).

\begin{proposition} \label{prop:calibns}
 Let \(\alpha \in \Lambda^k(\mathbb{R}^n)^*\) be a calibration. If the stabilizer of \(\alpha\) in \(\mathrm{GL}(n, \mathbb{R})\) is closed under transposition, then \(\alpha\) has the following affine non-squeezing property: if \(\psi\) is an affine map preserving \(\alpha\) for which \(\psi(B(r)) \subset Z_L(R)\) for some \(\alpha\)-calibrated subspace \(L\), then \(r \leq R\).
\end{proposition}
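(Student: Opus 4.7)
The plan is to follow the template already established by the proofs of Theorem~\ref{thm:affns} and Theorem~\ref{thm:poweraffns}, replacing the specific Wirtinger estimate used there by the comass-one inequality that defines a calibration. Write \(\psi(z) = Az + b\) with \(A\) in the stabilizer of \(\alpha\), and choose an oriented orthonormal basis \(v_1, \dots, v_k\) of the calibrated subspace \(L\), so that \(\alpha(v_1, \dots, v_k) = 1\). By the hypothesis that the stabilizer is closed under transposition, \(A^T\) also preserves \(\alpha\), and hence
\[
 1 = |\alpha(v_1, \dots, v_k)| = |\alpha(A^T v_1, \dots, A^T v_k)|.
\]

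The key step is to convert this equality into a lower bound on \(|A^T v_1| \cdots |A^T v_k|\). I would apply the Gram--Schmidt process to \(w_j := A^T v_j\) exactly as in the proof of Theorem~\ref{thm:poweraffns}: produce orthogonal vectors \(u_1, \dots, u_k\) with \(|u_j| \leq |w_j|\) and \(u_1 \wedge \cdots \wedge u_k = w_1 \wedge \cdots \wedge w_k\). Since \(\alpha\) has comass one, the equivalent formulation
\[
 |\alpha(x_1, \dots, x_k)| \leq |x_1| \cdots |x_k|, \qquad x_1, \dots, x_k \in \mathbb{R}^n,
\]
(recorded just after the definition of comass) gives
\[
 1 = |\alpha(w_1, \dots, w_k)| = |\alpha(u_1, \dots, u_k)| \leq |u_1| \cdots |u_k| \leq |w_1| \cdots |w_k|.
\]
Consequently, some \(|A^T v_j| \geq 1\); relabel so that \(|A^T v_1| \geq 1\).

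To conclude, set \(z = \pm r \cdot A^T v_1 / |A^T v_1|\) with the sign to be chosen, so that \(z \in B(r)\) and therefore \(\psi(z) \in Z_L(R)\). Since \(v_1 \in L\), the orthogonal projection onto \(L\) dominates a single coordinate and we obtain
\begin{align*}
 R^2 &\geq |\mathrm{proj}_L(\psi(z))|^2 \geq \langle v_1, \psi(z) \rangle^2 \\
 &= \bigl[ \langle A^T v_1, z \rangle + \langle v_1, b \rangle \bigr]^2 = \bigl[ \pm r |A^T v_1| + \langle v_1, b \rangle \bigr]^2.
\end{align*}
Choosing the sign to agree with that of \(\langle v_1, b \rangle\) gives \(R^2 \geq r^2 |A^T v_1|^2 \geq r^2\), which is the desired inequality.

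I do not anticipate a genuine obstacle: the two hypotheses (closure under transposition, comass one) are exactly what one needs at the two places where they enter in the proofs of Theorems~\ref{thm:affns} and~\ref{thm:poweraffns}. The only mild technical point is to make sure that the comass inequality is applied in its form for \emph{arbitrary} vectors rather than orthonormal ones; this is precisely what the Gram--Schmidt reduction accomplishes, and it mirrors the calculation already performed in Section~\ref{section:3.1} just before the statement of Theorem~\ref{thm:poweraffns}.
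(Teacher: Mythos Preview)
Your proof is correct and follows essentially the same route as the paper's: transpose the linear part, apply the comass-one inequality to obtain \(|A^T v_1|\cdots|A^T v_k|\geq 1\), and finish with the same one-coordinate estimate on \(\psi(z)\). The only cosmetic difference is that you re-run the Gram--Schmidt argument explicitly, whereas the paper simply invokes the equivalent comass inequality for arbitrary vectors that was recorded right after the definition.
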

\begin{proof}
 Suppose that \(\psi\) is given by \(\psi(x) = \Psi x + x_0\), where \(\Psi^*\alpha = \alpha\) and \(x_0 \in \mathbb{R}^n\). Choose an oriented orthonormal basis \(e_1, \dots, e_k\) of \(L\), so that \(\alpha(e_1, \dots, e_k) = 1\) since \(L\) is \(\alpha\)-calibrated. Since \(\Psi^T\) must preserve \(\alpha\) by assumption, it follows that 
 \[
 1 = |\alpha(e_1, \dots, e_k)| = |\alpha(\Psi^T e_1, \dots, \Psi^T e_k)| \leq |\Psi^T e_1| \cdots |\Psi^T e_k|.
 \]
 Hence one of \(|\Psi^Te_1|, \dots, |\Psi^Te_k|\) has norm at least \(1\). We may assume without loss of generality that \(|\Psi^Te_1| \geq 1\). Define \(x = \pm r \cdot \Psi^Te_1/|\Psi^Te_1|\), where the sign is to be determined. Since \(x \in B(r)\), we have \(\psi(x) \in Z_L(R)\), and so
 \begin{align*}
 R^2 &\geq \langle e_1, \psi(x) \rangle^2 + \cdots + \langle e_k, \psi(x) \rangle^2 \\
 &\geq \langle e_1, \Psi x + x_0\rangle^2 \\
 &= [\langle \Psi^T e_1, x \rangle + \langle e_1, x_0 \rangle ]^2 \\
 &= [\pm r |\Psi^T e_1| + \langle e_1, x_0 \rangle]^2.
 \end{align*}
 By choosing the sign in the definition of \(x\) to be the sign of \(\langle e_1, x_0 \rangle\), it follows that \(R^2 \geq r^2 |\Psi^Te_1|^2 \geq r^2\), and so \(R \geq r\).
\end{proof}

\begin{remark}
The condition that the stabilizer of the calibration \(\alpha\) is closed under transposition is not a necessary condition for \(\alpha\) to satisfy the non-squeezing property of Proposition~\ref{prop:calibns}. The fourth family of calibrations in our list of examples above provides us with an example. Let \(\alpha = e^1\), where \(e_1, e_2\) is an orthonormal basis of \(\mathbb{R}^2\) with dual basis \(e^1, e^2\). If \(\Psi\) is an invertible linear map which preserves \(\alpha\), then it is necessarily of the form
\[
 \Psi = \begin{pmatrix} 1 & 0 \\ c & d \end{pmatrix}, \qquad d \neq 0
\]
with respect to the basis \(e_1, e_2\). Suppose that the affine map \(\psi\) given by \(\psi(z) = \Psi z + z_0\) sends \(B(r)\) into \(Z_L(R)\), where \(z_0 = (x_0, y_0)\) and \(L = \mathrm{span}\{e_1\}\). Since the orthogonal projection onto \(L\) is just the projection onto the first component, this means that \(x + x_0 \in [-R, R]\) whenever \(x^2 + y^2 \leq r^2\). In particular, if we take \(y = 0\), then we see that \(x + x_0 \in [-R, R]\) whenever \(x \in [-r, r]\). So, \([-R, R]\) contains the interval \(x_0 + [-r, r]\) of length \(2r\), meaning that \(2R \geq 2r\). However, \(\Psi^T\) will only preserve \(\alpha\) if \(c = 0\), which as we have just seen is not necessary for the non-squeezing property to hold.  
\end{remark}

Recall that in the symplectic case, the affine non-squeezing theorem motivated the definition of the linear symplectic width. The analogous construction for a calibration \(\alpha\) satisfying the conditions of Proposition~\ref{prop:calibns} would be to define, for \(A \subset \mathbb{R}^n\),
\begin{equation}
 \label{eq:4.2}
 w_\alpha(A) = \sup\{\mathcal{V}_k r^k : \psi(B(r)) \subseteq A \text{ for some affine map \(\psi\) with \(\psi^*\alpha = \alpha\)}\},
\end{equation}
where \(\mathcal{V}_k\) is the \(k\)-dimensional volume of the unit ball in \(\mathbb{R}^k\). The \emph{\(\alpha\)-calibrated width} \(w_\alpha\) as defined here clearly has the exact same monotonicity property as the linear symplectic width. The conformality property, which is also clear, takes the form \(w_\alpha(\lambda A) = \lambda^k w_\alpha(A)\). Finally, the normalization property would take the form \(w_\alpha(B(1)) = w_\alpha(Z_L(1)) = \mathcal{V}_k\) for every calibrated subspace \(L\); the proof of the normalization property depends on the affine non-squeezing theorem for \(\alpha\) and is proved in the exact same way as that for the usual symplectic width, as in~\eqref{eq:2.1}.

It is reasonable to ask whether the constant coefficient calibrations on \(\mathbb{R}^n\) that satisfy an affine non-squeezing theorem as in Proposition~\ref{prop:calibns} also satisfy non-linear non-squeezing theorems. We explore the \(\mathrm{G}_2\) case in the next section, where we will see that it is, in fact, trivial.

\subsection{Non-squeezing for particular calibrated geometries}
\label{section:4.2}

Let us recall our short list of examples of calibrations from the previous section:
\begin{enumerate}
 \item The symplectic form \(\omega\) on \(\mathbb{R}^{2n}\), and more generally \(\omega^k/k!\) for \(1 \leq k \leq n\).
 
 \item The associative \(3\)-form $\varphi$ and the coassociative \(4\)-form $\psi$ on \(\mathbb{R}^7\) corresponding to \(\mathrm{G}_2\)-geometry, and the Cayley \(4\)-form \(\Phi\) on \(\mathbb{R}^8\) corresponding to \(\mathrm{Spin}(7)\)-geometry.

 \item The real part of the holomorphic volume form \(\Omega = dz^1 \wedge \cdots \wedge dz^n\) on \(\mathbb{C}^n = \mathbb{R}^{2n}\), and more generally \(\mathrm{Re}(e^{i\theta} \Omega)\) for every real \(\theta\).

 \item The form \(e^1 \wedge \cdots \wedge e^k\) on \(\mathbb{R}^n\), where \(e_1, \dots, e_n\) is any orthonormal basis with dual basis \(e^1, \dots, e^n\).
\end{enumerate}

We saw in Sections~\ref{sec:classical} and~\ref{sec:powers} that the first class of examples all satisfy the non-squeezing theorem, and in particular we explored the notion of a capacity for \(\omega\) and the consequences of the existence of capacities. We also saw that the last example \(e^1 \wedge \cdots \wedge e^k\) satisfies the affine non-squeezing theorem at the end of the previous section (despite it failing to satisfy the hypotheses of Proposition~\ref{prop:calibns}).

Let us now focus on the \(\mathrm{G}_2\) case. Let \(\varphi\) denote the associative \(3\)-form, so that \(\mathrm{G}_2\) is exactly the stabilizer in \(\mathrm{GL}(7, \mathbb{R})\) of \(\varphi\). It is known that \(\mathrm{G}_2\) is closed under transposition. So, in particular, the associative \(3\)-form \(\varphi\) satisfies the affine non-squeezing theorem in the sense of Proposition~\ref{prop:calibns}. In fact, every element of \(\mathrm{G}_2\) preserves the metric, so \(\mathrm{G}_2\) is a subgroup of \(\mathrm{O}(7, \mathbb{R})\). (See, for example,~\cite[Theorem 4.2]{spiro-g2}.)

As we now prove, the non-linear non-squeezing property for \emph{isometric} embeddings of a ball into a cylinder (regardless of whether the cylinder is a calibrated cylinder for some calibration) is straightforward to prove. In particular, it follows that there is a \emph{non-linear} associative non-squeezing theorem.

\begin{proposition} \label{prop:g2trivial}
 Let \(Z_L(R)\) be any cylinder in \(\mathbb{R}^n\) based on any non-trivial subspace \(L\) of \(\mathbb{R}^n\). That is,
 \[
 Z_L(R) := \{ x \in \mathbb{R}^n : |\mathrm{proj}_L(x)|^2 \leq R^2 \}.
 \]
 If \(\varphi \colon B(r) \to \mathbb{R}^n\) is an \emph{isometric} embedding such that \(\varphi(B(r)) \subseteq Z_L(R)\), then \(r \leq R\).
\end{proposition}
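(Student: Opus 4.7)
The plan is to reduce to an affine argument by observing that any isometric embedding of a Euclidean ball into Euclidean space is (the restriction of) an affine isometry, and then to run the same strategy as in the proof of Proposition~\ref{prop:calibns}, with the orthogonality of the linear part playing the role of the transposition-closure hypothesis.

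First, I would invoke the classical fact that any distance-preserving map between subsets of Euclidean space extends uniquely to an affine isometry of the ambient space whenever its domain affinely spans $\mathbb{R}^n$ (this can be seen, for instance, by noting that isometries preserve midpoints and hence affine combinations). Since $B(r)$ has non-empty interior, it affinely spans $\mathbb{R}^n$, so there exist $A \in \mathrm{O}(n)$ and $b \in \mathbb{R}^n$ such that $\varphi(x) = Ax + b$ for all $x \in B(r)$.

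Next, since $L$ is non-trivial I can pick a unit vector $\ell \in L$ and set $v = A^T \ell$. Because $A$ is orthogonal, $v$ is a unit vector and $Av = \ell \in L$. The points $\pm rv$ lie in $B(r)$, so their images $\pm r\ell + b$ lie in $Z_L(R)$; using that $\mathrm{proj}_L(\pm r\ell + b) = \pm r \ell + \mathrm{proj}_L(b)$, this gives
\[
R^2 \geq |\mathrm{proj}_L(\pm r\ell + b)|^2 = r^2 \pm 2r \langle \ell, \mathrm{proj}_L(b) \rangle + |\mathrm{proj}_L(b)|^2.
\]
Choosing the sign to agree with that of $\langle \ell, \mathrm{proj}_L(b) \rangle$ yields $R^2 \geq r^2$, as required.

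There is essentially no obstacle: the strong rigidity of isometric embeddings of Euclidean sets with non-empty interior makes them automatically affine, and the transpose of an orthogonal matrix is trivially orthogonal, so both ingredients needed for the calibrated affine non-squeezing argument are free. This is precisely why the authors advertise the resulting non-linear $\mathrm{G}_2$ non-squeezing theorem as trivial.
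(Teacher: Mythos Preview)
Your argument is correct. It differs in presentation from the paper's proof, though the two ultimately examine the same points. The paper does \emph{not} first reduce to an affine isometry; instead it argues geometrically: take the affine line $\ell$ through $\varphi(0)$ in a direction lying in $L$, note that the chord $\ell \cap Z_L(R)$ has length at most $2R$, and that $\ell$ meets $\varphi(\partial B(r))$ in two points $\varphi(x),\varphi(y)$ with $\varphi(0)$ between them; then the distance-preserving property alone gives $2R \geq d(\varphi(x),\varphi(y)) = d(x,0)+d(0,y) = 2r$. Your approach front-loads the rigidity (isometric embedding $\Rightarrow$ restriction of an element of $\mathrm{O}(n)\ltimes\mathbb{R}^n$) and then reuses the template of Proposition~\ref{prop:calibns}; the paper's approach is more self-contained and avoids invoking that rigidity, using only that $\varphi$ preserves distances. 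In effect, the paper's two boundary points $\varphi(x),\varphi(y)$ are exactly your $\pm r\ell + b$ once one unwinds the affine description, so the inequalities coincide.
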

\begin{proof}
 Choose an affine line \(\ell\) parallel to \(L\) that intersects \(\varphi(0)\). The line \(\ell\) intersects \(\partial Z_L(R)\) in two points, say, \(p\) and \(q\). The line will also intersect \(\partial(\varphi(B(r))) = \varphi(\partial B(r))\) in two points, say, \(\varphi(x)\) and \(\varphi(y)\) for some \(x\) and \(y\) in \(\partial B(r)\). (See Figure~\ref{fig:4.1}.)
 \begin{figure}[H]
 \centering
\includegraphics[scale=1.2] {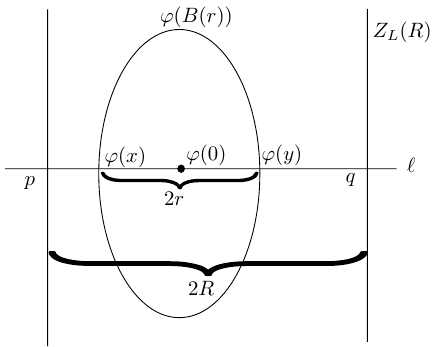}
 \caption{The proof of Proposition~\ref{prop:g2trivial}.}
 \label{fig:4.1}
 \end{figure}
 The segment of \(\ell\) from \(p\) to \(q\) contains the segment from \(\varphi(x)\) to \(\varphi(y)\), so
 \begin{align*}
 2R &\geq d(\varphi(x), \varphi(y)) \\
 &= d(\varphi(x), \varphi(0)) + d(\varphi(0), \varphi(y)) \\
 &= d(x, 0) + d(0, y) \\
 &= 2r. \qedhere
 \end{align*}
\end{proof}

\begin{remark} \label{rmk:isometric}
    Proposition~\ref{prop:g2trivial} shows that whenever the Euclidean metric on \(\mathbb{R}^n\) is determined from a constant coefficient calibration form \(\alpha\), then there is a corresponding non-linear non-squeezing theorem. This includes, for example, the coassociative \(4\)-form \(\psi\) on \(\mathbb{R}^7\) and the Cayley \(4\)-form \(\Phi\) on \(\mathbb{R}^8\).
\end{remark}

We conclude this section with some comments on possible \(\mathrm{G}_2\) (associative) analogues of the applications of non-squeezing and capacities of Section~\ref{section:2.4}. Although the ``non-linear associative non-squeezing'' theorem has a very simple geometric proof, it is not clear to the authors whether this excludes a non-trivial theory of ``associative capacities''. 

In analogy with Section~\ref{section:3.3}, it is natural to ask whether there are associative analogues of Theorems~\ref{thm:nonlinchar} and~\ref{thm:c0closure}. To develop an analogue of Theorem~\ref{thm:nonlinchar}, one would have to in particular determine an associative analogue of the results in Section~\ref{section:2.3} on the symplectic normal forms of ellipsoids. It is not clear to the authors how to do this. 

It is easy to write down as a conjecture the direct analogue of Theorem~\ref{thm:c0closure} for \(\mathrm{G}_2\)-manifolds. Let \((M, g, \varphi)\) be a \(\mathrm{G}_2\)-manifold, where \(\varphi\) is the associative \(3\)-form. Let
\[
 G := \{ F \in \mathrm{Diff}(M) : F^*\varphi = \varphi \}.
\]
It is easy to see that \(G\) is closed in \(\mathrm{Diff}(M)\) with respect to the \(C^1\)-compact-open topology. The question is whether \(G\) is closed in \(\mathrm{Diff}(M)\) with respect to the \(C^0\)-compact-open topology. At least in the case that \(M\) is connected, it would in fact suffice to check whether \(G\) is closed in \(\mathrm{Isom}(M, g)\), since \(\mathrm{Isom}(M, g)\) is closed in \(\mathrm{Diff}(M)\) as a consequence of the Myers-Steenrod theorem that any distance-preserving bijection between connected Riemannian manifolds is in fact metric-preserving. (Indeed, if \((F_n)\) is a sequence of Riemannian isometries converging uniformly on compact sets to a diffeomorphism \(F\), then \(F\) must be a metric isometry since each \(F_n\) is and \(F_n \to F\) pointwise, and then Myers-Steenrod implies that \(F\) is a Riemannian isometry.) Perhaps it is easier to check whether \(G\) is closed in \(\mathrm{Isom}(M, g)\), since the latter is, again by a theorem of Myers-Steenrod, a finite-dimensional Lie group with respect to the \(C^0\)-compact-open topology (whereas \(\mathrm{Diff}(M)\) never is).

\section{(Affine) non-squeezing in the special Lagrangian case} \label{sec:slag}

In Section~\ref{sec:calibrations}, we explored the general question of when a calibration has the ``affine non-squeezing property'', resulting in Proposition~\ref{prop:calibns}. We verified that three of our four main examples of calibrations (the symplectic form \(\omega\) in Theorem~\ref{thm:affns}, its powers \(\omega^k\) in Theorem~\ref{thm:poweraffns}, and the associative \(3\)-form \(\varphi\) in Proposition~\ref{prop:g2trivial}) all had the affine non-squeezing property (as do the coassociative and Cayley calibrations, by Remark~\ref{rmk:isometric}.) In this final section we turn to the fourth and final example, the holomorphic \(n\)-form \(\Omega = dz^1 \wedge \cdots \wedge dz^n\) corresponding to special Lagrangian geometry.

\subsection{A brief review of special Lagrangian geometry}
\label{section:5.1}

We briefly review in this section the results needed in Section~\ref{section:5.2}. We consider \(\mathbb{C}^n = \mathbb{R}^{2n}\) with coordinates \(z^k = x^k + iy^k\), and we define a complex-valued \(n\)-form \(\Omega\) on \(\mathbb{R}^{2n}\) by
\[
 \Omega := dz^1 \wedge \cdots \wedge dz^n = (dx^1 + i dy^1) \wedge \cdots \wedge (dx^n + i dy^n).
\]
Throughout this section, we let \(\alpha\) and \(\beta\) denote the real and imaginary parts of \(\Omega\), respectively. We refer to~\cite{lotay} for the proof of the following theorem.

\begin{theorem} \label{thm:omegacalib}
 (\cite[Theorem 5.2]{lotay}) For all orthonormal \(v_1, \dots, v_n\) in \(\mathbb{R}^{2n}\), 
 \[
 |\Omega(v_1, \dots, v_n)| \leq 1,
 \]
 with equality if and only if \(P = \mathrm{span}\{v_1, \dots, v_n\}\) is Lagrangian. That is, if \(\omega|_P = 0\).
\end{theorem}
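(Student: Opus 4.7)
The plan is to express $\Omega(v_1,\ldots,v_n)$ as the determinant of an $n\times n$ complex matrix and then invoke Hadamard's inequality in its complex form. Identifying $\mathbb{R}^{2n}$ with $\mathbb{C}^n$ in the standard way, each real vector $v_j$ corresponds to a complex vector $c_j \in \mathbb{C}^n$ whose $k$-th component is $dz^k(v_j) = x_j^k + iy_j^k$. Letting $C$ be the $n\times n$ complex matrix whose $j$-th column is $c_j$, the usual evaluation rule for wedge products of $1$-forms gives
\[
\Omega(v_1,\ldots,v_n) = \det\bigl(dz^i(v_j)\bigr)_{i,j} = \det C.
\]

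Next I would apply the complex Hadamard inequality $|\det C| \leq \prod_{j=1}^n \|c_j\|_{\mathbb{C}^n}$, with equality iff the columns of $C$ are pairwise Hermitian-orthogonal (given that none vanish). The essential observation is that the Hermitian norm on $\mathbb{C}^n$ pulls back to the Euclidean norm on $\mathbb{R}^{2n}$, since $\|c_j\|_{\mathbb{C}^n}^2 = \sum_k (x_j^k)^2 + (y_j^k)^2 = \|v_j\|^2 = 1$ by orthonormality. Thus $|\Omega(v_1,\ldots,v_n)| \leq 1$.

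For the equality case, I would compute the Hermitian inner product on $\mathbb{C}^n$ directly in terms of the $\mathbb{R}^{2n}$ data:
\[
\langle c_j, c_k \rangle_{\mathbb{C}^n} = \sum_m (x_j^m - iy_j^m)(x_k^m + iy_k^m) = \langle v_j, v_k\rangle_{\mathbb{R}^{2n}} + i\,\omega(v_j, v_k).
\]
For $j \neq k$, orthonormality of the $v_j$ kills the real part, so Hermitian orthogonality of the columns of $C$ is equivalent to $\omega(v_j, v_k) = 0$ for every $j \neq k$. Since $v_1,\ldots,v_n$ form a basis of $P$, this is precisely the Lagrangian condition $\omega|_P = 0$.

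No substantial obstacle is anticipated; the proof reduces to the observation that $\Omega$ evaluated on real $n$-tuples is a complex determinant, after which Hadamard supplies both the comass-one bound and the characterization of the equality case. The only small computation is the identity $\langle c_j, c_k\rangle_{\mathbb{C}^n} = \langle v_j, v_k\rangle + i\,\omega(v_j, v_k)$, which is exactly the algebraic mechanism by which the symplectic form $\omega$ enters and singles out Lagrangian planes as the calibrated subspaces for $\operatorname{Re}\Omega$.
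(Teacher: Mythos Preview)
Your argument is correct. The paper does not actually prove this theorem; it simply cites \cite{lotay} for the proof, so there is no in-paper argument to compare against. Your approach via Hadamard's inequality is the standard one (and is essentially the argument found in the cited reference): writing $\Omega(v_1,\ldots,v_n)$ as $\det C$ for the complex $n\times n$ matrix with columns $c_j = (dz^k(v_j))_k$, applying Hadamard with $\|c_j\|_{\mathbb{C}^n} = \|v_j\|_{\mathbb{R}^{2n}} = 1$, and reading off the equality case from the identity $\langle c_j, c_k\rangle_{\mathbb{C}^n} = \langle v_j, v_k\rangle_{\mathbb{R}^{2n}} + i\,\omega(v_j, v_k)$. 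The only point worth making explicit is that since each column has unit norm, the degenerate branch of Hadamard's equality case (a zero column) cannot occur, so equality forces pairwise Hermitian orthogonality and hence $\omega|_P = 0$; conversely, if $P$ is Lagrangian then $C$ is unitary and $|\det C| = 1$. None of this is a gap, just a sentence you may want to add for completeness.
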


Thus the ``\(\Omega\)-calibrated'' subspaces of \(\mathbb{R}^{2n}\) are exactly the Lagrangian subspaces. (Since \(\Omega\) is complex-valued, it is not literally a calibration as defined in Section~\ref{sec:calibrations}, but we will eventually study in the next section an affine non-squeezing theorem for it.) An immediate consequence of Theorem~\ref{thm:omegacalib} is that for every real \(\theta\),
\[
 |\mathrm{Re}(e^{i\theta} \Omega)(v_1, \dots, v_n)| \leq 1 \qquad \text{ for all orthonormal } v_1, \dots, v_n \in \mathbb{R}^{2n}.
\]
To show that each \(\mathrm{Re}(e^{i\theta} \Omega)\) is a calibration, it remains to show that for each \(\theta\), equality holds for at least one \((v_1, \dots, v_n)\). For example,
\[
 (e^{i\theta}\Omega)(e_1, \dots, e_{n-1}, e^{-i\theta}e_n) = \Omega(e_1, \dots, e_n) = 1,
\]
so in particular \(\mathrm{Re}(e^{i\theta}\Omega)(e_1, \dots, e_{n-1}, e^{-i\theta}e_n) = 1\). We call a subspace of \(\mathbb{R}^{2n}\) \emph{special Lagrangian with phase \(e^{-i\theta}\)} if it is calibrated with respect to \(\mathrm{Re}(e^{i\theta}\Omega)\).

\begin{remark} \label{rmk:slag}
It follows from the above discussion and Theorem~\ref{thm:omegacalib} that a subspace of \(\mathbb{R}^{2n}\) is special Lagrangian \emph{for some phase} if and only if it is Lagrangian. Because of this, we will think of \(\Omega\) as a ``complex-valued calibration form'' whose calibrated subspaces are the Lagrangian subspaces.
\end{remark}

In the proof of Proposition~\ref{prop:omegastabilizer} below, we will need the following well-known result, which says that \(\Omega\) determines the complex structure in a precise sense. We include a proof for completeness.

\begin{proposition} \label{prop:complexstr}
Let \(V\) be a \(2n\)-dimensional real vector space. Let \(\Upsilon\) be a complex-valued \(n\)-form on \(V\) that is decomposable and which satisfies \(\Upsilon \wedge \overline{\Upsilon} \neq 0\). Let \(V_\mathbb{C}\) denote the complexification of \(V\). Define
\begin{align*}
V_\mathbb{C}^{1,0} &= \{v \in V_\mathbb{C} : i_v\overline{\Upsilon} = 0\}, \\
V_\mathbb{C}^{0,1} &= \{v \in V_\mathbb{C} : i_v \Upsilon = 0\},
\end{align*}
Then the following statements hold.
\begin{enumerate}
\item \(V_\mathbb{C} = V_\mathbb{C}^{1,0} \oplus V_\mathbb{C}^{0,1}\).
\item The linear map \(J\) on \(V_\mathbb{C}\) defined by letting \(V_\mathbb{C}^{1,0}\) (resp.\ \(V_\mathbb{C}^{0,1}\)) be the \(i\)-eigenspace (resp.\ (\(-i\))-eigenspace) restricts to a complex structure on \(V\).
\end{enumerate}
Moreover, if \(\Upsilon = (e^1 + if^1) \wedge \cdots \wedge (e^n + if^n)\) with respect to a basis \(e_1, \dots, e_n, f_1, \dots, f_n\) of \(V\), then \(Je_k = f_k\) and \(Jf_k = -e_k\) for each \(k\).
\end{proposition}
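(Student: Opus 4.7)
The plan is to exploit decomposability of $\Upsilon$ to identify $V_\mathbb{C}^{1,0}$ and $V_\mathbb{C}^{0,1}$ as joint kernels of complementary sets of $1$-forms, and then to use the interplay between these subspaces and complex conjugation to verify that the resulting $J$ preserves $V$.

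First, since $\Upsilon$ is decomposable, I would write $\Upsilon = \theta^1 \wedge \cdots \wedge \theta^n$ for some $\theta^k \in V_\mathbb{C}^*$, and observe that the hypothesis $\Upsilon \wedge \overline{\Upsilon} \neq 0$ is equivalent to $\{\theta^1, \dots, \theta^n, \overline{\theta^1}, \dots, \overline{\theta^n}\}$ forming a basis of $V_\mathbb{C}^*$. A routine calculation shows that if $\omega^1, \dots, \omega^n \in V_\mathbb{C}^*$ are linearly independent, then $i_v(\omega^1 \wedge \cdots \wedge \omega^n) = 0$ if and only if $\omega^k(v) = 0$ for all $k$ (the omitted $(n-1)$-fold wedges $\omega^1 \wedge \cdots \wedge \widehat{\omega^k} \wedge \cdots \wedge \omega^n$ are linearly independent in $\Lambda^{n-1}V_\mathbb{C}^*$). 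Applying this gives $V_\mathbb{C}^{1,0} = \bigcap_k \ker \overline{\theta^k}$ and $V_\mathbb{C}^{0,1} = \bigcap_k \ker \theta^k$, each of complex dimension $n$; their intersection is zero, since any common element would be annihilated by the entire basis, so dimension counting yields claim (1).

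For claim (2), I would first note that $i_v\overline{\Upsilon} = 0$ if and only if $i_{\overline{v}}\Upsilon = 0$ (apply complex conjugation to the vanishing identity), so complex conjugation interchanges $V_\mathbb{C}^{1,0}$ and $V_\mathbb{C}^{0,1}$. It then follows that $J$ commutes with conjugation on $V_\mathbb{C}$: for $v \in V_\mathbb{C}^{1,0}$ one has $\overline{Jv} = \overline{iv} = -i\overline{v} = J\overline{v}$, since $\overline{v} \in V_\mathbb{C}^{0,1}$, and the case $v \in V_\mathbb{C}^{0,1}$ is symmetric. Therefore $J$ preserves the real subspace $V = \{v \in V_\mathbb{C} : v = \overline{v}\}$, and the identity $J^2 = -\mathrm{id}$ is immediate from the eigenvalue definition on $V_\mathbb{C}$.

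For the moreover clause, here $\theta^k = e^k + if^k$ and $\overline{\theta^k} = e^k - if^k$. A direct computation gives $(e^j - if^j)(e_k - if_k) = \delta_{jk} + i^2\delta_{jk} = 0$, so $e_k - if_k \in V_\mathbb{C}^{1,0}$. Applying $J$ and using $\mathbb{C}$-linearity then gives $J(e_k - if_k) = i(e_k - if_k) = f_k + ie_k$, and separating real and imaginary parts (using that both $Je_k$ and $Jf_k$ lie in $V$) yields $Je_k = f_k$ and $Jf_k = -e_k$. I expect the main technical step to be the identification of the two subspaces in paragraph two, and in particular the conjugation-invariance verification that $J$ restricts to $V$; the remaining arguments are straightforward calculations.
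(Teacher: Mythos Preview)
Your proposal is correct and follows essentially the same approach as the paper: decompose $\Upsilon$ into $1$-forms, use $\Upsilon \wedge \overline{\Upsilon} \neq 0$ to get a basis of $V_\mathbb{C}^*$, identify the two subspaces via these $1$-forms (the paper phrases this via the dual basis rather than kernel intersections, but it is the same content), observe that conjugation swaps $V_\mathbb{C}^{1,0}$ and $V_\mathbb{C}^{0,1}$ to conclude $J$ preserves $V$, and finish the ``moreover'' clause by checking $e_k - if_k \in V_\mathbb{C}^{1,0}$ and reading off real and imaginary parts.
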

\begin{proof}
 \,
 \begin{enumerate}
 \item If \(\Upsilon = \theta^1 \wedge \cdots \wedge \theta^n\) for some complex-valued \(1\)-forms \(\theta^1, \dots, \theta^n\), then
 \[
 \theta^1 \wedge \cdots \wedge \theta^n \wedge \overline{\theta^1} \wedge \cdots \wedge \overline{\theta^n} \neq 0.
 \]
 This implies that \((\theta^1, \dots, \theta^n, \overline{\theta^1}, \dots, \overline{\theta^n})\) is a basis of \(V_\mathbb{C}^*\). If the corresponding dual basis of \(V_\mathbb{C}\) is \((v_1, \dots, v_n, \overline{v_1}, \dots, \overline{v_n})\), then we have that \(V_\mathbb{C}^{1,0} = \mathrm{span}\{v_1, \dots, v_n\}\) and \(V_\mathbb{C}^{0,1} = \mathrm{span}\{\overline{v_1}, \dots, \overline{v_n}\}\).

 \item We need to show that if \(v \in V_\mathbb{C}\) is real, then so is \(Jv\). With respect to the decomposition \(V_\mathbb{C} = V_\mathbb{C}^{1,0} \oplus V_\mathbb{C}^{0,1}\), write \(v = v_{1,0} + v_{0,1}\). Since complex conjugation interchanges \(V_\mathbb{C}^{1,0}\) and \(V_\mathbb{C}^{0,1}\), the condition \(v = \overline{v}\) means that \(\overline{v_{1,0}} = v_{0,1}\). It follows that
 \[
 \overline{Jv} = \overline{iv_{1,0} - iv_{0,1}} = -i\overline{v_{1,0}} + i\overline{v_{0,1}} = -iv_{0,1} + iv_{1,0} = Jv.
 \]
 So, \(J\) maps real vectors to real vectors. That is, \(J\) restricts to a complex structure on \(V\).
 \end{enumerate}
 Now if \(\Upsilon = (e^1 + if^1) \wedge \cdots \wedge (e^n + if^n)\), then \(e_k - if_k\) is in \(V_\mathbb{C}^{1,0}\), so
 \[
 Je_k - iJf_k = J(e_k - if_k) = i(e_k - if_k) = f_k + ie_k.
 \]
 Comparing real and imaginary parts gives us \(Je_k = f_k\) and \(Jf_k = -e_k\).
\end{proof}

\subsection{Affine non-squeezing and rigidity}
\label{section:5.2}

According to Proposition~\ref{prop:calibns}, if we can prove that the stabilizer of \(\mathrm{Re}(e^{i\theta} \Omega)\) in \(\mathrm{GL}(2n, \mathbb{R})\) is closed under transposition, then it will satisfy an affine non-squeezing theorem. We examine this for small \(n\) and only for the case that \(e^{i\theta} = 1\).
\begin{enumerate}
 \item \(n = 1\). In this case, \(\mathrm{Re}(\Omega) = e^1\). We explored this example after the proof of Proposition~\ref{prop:calibns}, where we showed that it satisfies an affine non-squeezing theorem, despite its stabilizer failing to be closed under transposition.

 \item \(n = 2\). In this case, \(\mathrm{Re}(\Omega) = e^1 \wedge e^2 - f^1 \wedge f^2\). Up to an orthogonal transformation of \(\mathbb{R}^4\), this is just the symplectic case, which was discussed in Section~\ref{sec:classical}. Indeed, let \(Q \in \mathrm{O}(4)\) be given by \(Qe_1 = e_1\), \(Qe_2 = -f_1\), \(Qf_1 = e_2\), and \(Qf_2 = f_2\). Then \(Q^*(\mathrm{Re}(\Omega)) = \omega\). It follows that the stabilizer of \(\mathrm{Re}(\Omega)\) is conjugate to \(\mathrm{Sp}(4, \mathbb{R})\) by an element of \(\mathrm{O}(4)\), so it is closed under transposition since \(\mathrm{Sp}(4, \mathbb{R})\) is.

 \item \(n = 3\). In this case, it is no longer immediately clear whether the stabilizer of 
 \[
 \mathrm{Re}(\Omega) = e^1 \wedge e^2 \wedge e^3 - e^1 \wedge f^2 \wedge f^3 - f^1 \wedge e^2 \wedge f^3 - f^1 \wedge f^2 \wedge e^3
 \]
 is closed under transposition. It follows from Hitchin's work~\cite[Section 2.2]{hitchin} on \(3\)-forms in six dimensions that if \(A \in \mathrm{GL}(6, \mathbb{R})\), then \(\alpha = \mathrm{Re}(\Omega)\) and \(A^*\alpha\) determine complex structures \(J_\alpha\) and \(J_{A^*\alpha}\) on \(\mathbb{R}^6\), respectively, which are related by
 \[
 J_{A^*\alpha} = \mathrm{sign}(\det A) A^{-1} J_\alpha A = \mathrm{sign}(\det A) A^* (J_\alpha)
 \]
 Moreover, \(J_\alpha\) is the standard complex structure \(J\) on \(\mathbb{R}^6\). It follows from this and Proposition~\ref{prop:omegastabilizer} below that
 \[
 \mathrm{Stab}(\mathrm{Re}(\Omega)) = \mathrm{SL}(3, \mathbb{C}) \cup (\sigma \cdot \mathrm{SL}(3, \mathbb{C})),
 \]
 where \(\sigma\) is complex conjugation. It then follows from Corollary~\ref{cor:transstabilizer} below that this stabilizer is closed under transposition, so \(\mathrm{Re}(\Omega)\) enjoys the affine non-squeezing property.
\end{enumerate}

It is not clear to the authors how to deal with the cases \(n \geq 4\). So it is not clear whether the form \(\mathrm{Re}(\Omega)\), or more generally the forms \(\mathrm{Re}(e^{i\theta} \Omega)\) for real \(\theta\), should satisfy an affine non-squeezing theorem in general.

Rather than trying to approach the general case of the stabilizer of \(\mathrm{Re}(\Omega)\), we can try to focus on, for example, the smaller group given by the stabilizer of both \(\mathrm{Re}(\Omega)\) and \(\mathrm{Im}(\Omega)\). In fact, this is equivalent to studying \(\mathrm{Re}(e^{i\theta_1} \Omega)\) and \(\mathrm{Re}(e^{i\theta_2} \Omega)\) for any two distinct and non-antipodal phases \(e^{i\theta_1}\) and \(e^{i\theta_2}\), as we now show.

\begin{proposition}
 Suppose that \(A \in \mathrm{GL}(2n, \mathbb{R})\) preserves \(\mathrm{Re}(e^{i\theta_1} \Omega)\) and \(\mathrm{Re}(e^{i\theta_2} \Omega)\) for two distinct and non-antipodal phases \(e^{i\theta_1}\) and \(e^{i\theta_2}\). Then \(A\) preserves \(\Omega\).
\end{proposition}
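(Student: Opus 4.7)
The plan is to reduce this to elementary $2 \times 2$ linear algebra on the pair of real $n$-forms $(\alpha, \beta) := (\mathrm{Re}\,\Omega, \mathrm{Im}\,\Omega)$. First I would expand
\[
 \mathrm{Re}(e^{i\theta}\Omega) = \cos\theta \cdot \alpha - \sin\theta \cdot \beta,
\]
so that each hypothesis $A^*\mathrm{Re}(e^{i\theta_j}\Omega) = \mathrm{Re}(e^{i\theta_j}\Omega)$ becomes the linear relation
\[
 \cos\theta_j \cdot (A^*\alpha - \alpha) - \sin\theta_j \cdot (A^*\beta - \beta) = 0, \qquad j = 1, 2,
\]
in the two ``unknown'' real $n$-forms $A^*\alpha - \alpha$ and $A^*\beta - \beta$.

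The coefficient matrix of this $2 \times 2$ system is
\[
 M = \begin{pmatrix} \cos\theta_1 & -\sin\theta_1 \\ \cos\theta_2 & -\sin\theta_2 \end{pmatrix},
\]
with $\det M = \sin(\theta_1 - \theta_2)$. The assumption that the phases $e^{i\theta_1}$ and $e^{i\theta_2}$ are distinct and non-antipodal is precisely the condition $\theta_1 - \theta_2 \not\equiv 0 \pmod{\pi}$, which is equivalent to $\det M \neq 0$. Invertibility of $M$ then forces $A^*\alpha - \alpha = 0$ and $A^*\beta - \beta = 0$ simultaneously, whence $A^*\Omega = A^*\alpha + i A^*\beta = \alpha + i\beta = \Omega$.

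There is no real technical obstacle: the whole argument is the observation that each of $\alpha$ and $\beta$ lies in the real span of any two of the rotated real parts $\mathrm{Re}(e^{i\theta_j}\Omega)$, as soon as those rotations are linearly independent. The only point requiring care is verifying that ``distinct and non-antipodal'' is exactly the right non-degeneracy hypothesis, which is immediate from $\det M = \sin(\theta_1 - \theta_2)$.
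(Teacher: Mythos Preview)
Your proof is correct and is essentially identical to the paper's own argument: both expand $\mathrm{Re}(e^{i\theta_j}\Omega) = (\cos\theta_j)\alpha - (\sin\theta_j)\beta$, assemble the two hypotheses into a $2\times 2$ linear system with coefficient matrix $M$ as you wrote, and use $\det M = \sin(\theta_1 - \theta_2) \neq 0$ to conclude $A^*\alpha = \alpha$ and $A^*\beta = \beta$. The only cosmetic difference is that you phrase it as a homogeneous system in the unknowns $A^*\alpha - \alpha$ and $A^*\beta - \beta$, whereas the paper equates $M\begin{pmatrix}A^*\alpha\\A^*\beta\end{pmatrix}$ with $M\begin{pmatrix}\alpha\\\beta\end{pmatrix}$ directly.
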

\begin{proof}
 Let us write \(\alpha = \mathrm{Re}(\Omega)\) and \(\beta = \mathrm{Im}(\Omega)\), so that
 \[
 \mathrm{Re}(e^{i\theta_k}\Omega) = (\cos \theta_k) \alpha - (\sin \theta_k) \beta, \qquad k = 1, 2.
 \]
 The equations \(A^*(\mathrm{Re}(e^{i\theta_k}\Omega)) = \mathrm{Re}(e^{i\theta_k}\Omega)\) therefore say that 
 \[
 \begin{pmatrix} \cos \theta_1 & -\sin \theta_1 \\ \cos \theta_2 & -\sin \theta_2\end{pmatrix} \begin{pmatrix} A^*\alpha \\ A^*\beta \end{pmatrix} = \begin{pmatrix} \cos \theta_1 & -\sin \theta_1 \\ \cos \theta_2 & -\sin \theta_2\end{pmatrix} \begin{pmatrix} \alpha \\ \beta \end{pmatrix}.
 \]
 The matrix on each side of the equation has determinant \(\sin(\theta_1 - \theta_2)\), which is non-zero since \(e^{i\theta_1}\) and \(e^{i\theta_2}\) are not antipodal. This implies that \(A^*\alpha = \alpha\) and \(A^*\beta = \beta\), and so \(A^*\Omega = A^*\alpha + iA^*\beta = \alpha + i\beta = \Omega\).
\end{proof}

Recall from Theorem~\ref{thm:omegacalib} that the form \(\Omega\) satisfies the analogous ``comass-one'' condition for a complex-valued form. Once we prove that the stabilizer of \(\Omega\) in \(\mathrm{GL}(2n, \mathbb{R})\) is closed under transposition (this is shown in Corollary~\ref{cor:transstabilizer} below), then the proof of Proposition~\ref{prop:calibns} goes through with \(\alpha\) replaced by \(\Omega\). Note that Proposition~\ref{prop:calibns} does not literally apply because we stated it for real calibrations, but it is easily extended to complex-valued ``calibrations'' with the exact same proof. Recall from Remark~\ref{rmk:slag} that the ``calibrated subspaces'' for \(\Omega\) are precisely the Lagrangian subspaces, which are special Lagrangian subspaces for some phase.

\begin{proposition} \label{prop:omegastabilizer}
 The stabilizer of \(\Omega\) in \(\mathrm{GL}(2n, \mathbb{R})\) is \(\mathrm{SL}(n, \mathbb{C})\), regarded as a subgroup of \(\mathrm{GL}(2n, \mathbb{R})\) via the embedding
 \begin{equation}
 \label{eq:5.2.1}
 \mathrm{GL}(n, \mathbb{C}) \to \mathrm{GL}(2n, \mathbb{R}), \qquad X + iY \mapsto \begin{pmatrix} X & -Y \\ Y & X \end{pmatrix}.
 \end{equation}
In particular, any element $A \in \mathrm{GL}(2n, \mathbb{R})$ that preserves $\Omega$ is necessarily complex-linear.
\end{proposition}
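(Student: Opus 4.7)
The plan is to deduce the result from Proposition~\ref{prop:complexstr}, which will show that $\Omega$ intrinsically determines the standard complex structure $J$ on $\mathbb{R}^{2n}$. First, I would check that $\Omega = (e^1 + if^1) \wedge \cdots \wedge (e^n + if^n)$ satisfies the hypotheses of that proposition: it is manifestly decomposable, and $\Omega \wedge \overline{\Omega}$ is a non-zero (real) scalar multiple of the Euclidean volume form. The last clause of Proposition~\ref{prop:complexstr} then identifies the complex structure it determines with the standard $J$ on $\mathbb{R}^{2n}$ (sending $e_k \mapsto f_k$ and $f_k \mapsto -e_k$).

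Next, suppose $A \in \mathrm{GL}(2n, \mathbb{R})$ satisfies $A^*\Omega = \Omega$, and extend $A$ complex-linearly to $V_\mathbb{C} = \mathbb{R}^{2n} \otimes_\mathbb{R} \mathbb{C}$. Using the characterization $V_\mathbb{C}^{0,1} = \{v : i_v \Omega = 0\}$, I would show $A$ preserves $V_\mathbb{C}^{0,1}$: for $v \in V_\mathbb{C}^{0,1}$ and arbitrary $u_2, \ldots, u_n \in V_\mathbb{C}$, writing $u_i = A w_i$ with $w_i = A^{-1} u_i$ gives
\[
 \Omega(Av, u_2, \ldots, u_n) = \Omega(Av, Aw_2, \ldots, Aw_n) = (A^*\Omega)(v, w_2, \ldots, w_n) = 0.
\]
Taking complex conjugates of $A^*\Omega = \Omega$ shows $A^*\overline{\Omega} = \overline{\Omega}$, and the same argument yields $A(V_\mathbb{C}^{1,0}) \subseteq V_\mathbb{C}^{1,0}$. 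Since $A$ preserves both eigenspaces of $J$ on $V_\mathbb{C}$, we obtain $AJ = JA$; in other words, $A$ is complex-linear, and corresponds via~\eqref{eq:5.2.1} to some $C \in \mathrm{GL}(n, \mathbb{C})$.

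To conclude, I would invoke the standard identity $C^*\Omega = (\det_\mathbb{C} C)\,\Omega$ for a complex-linear map $C$ on $\mathbb{C}^n$ acting on the holomorphic volume form. The condition $A^*\Omega = \Omega$ therefore forces $\det_\mathbb{C} C = 1$, so $A \in \mathrm{SL}(n, \mathbb{C})$; the reverse inclusion is immediate from the same identity. The parenthetical claim about complex-linearity is exactly the intermediate conclusion of the previous step. The only conceptually non-trivial ingredient --- that $\Omega$ recovers the complex structure --- is already packaged in Proposition~\ref{prop:complexstr}, so I do not anticipate any serious obstacle; the remainder is standard linear algebra.
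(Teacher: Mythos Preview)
Your proposal is correct and follows essentially the same approach as the paper: both invoke Proposition~\ref{prop:complexstr} to show that preserving $\Omega$ forces $AJ = JA$, and then use the identity $A^*\Omega = (\det_\mathbb{C} A)\,\Omega$ to conclude $A \in \mathrm{SL}(n,\mathbb{C})$. The only cosmetic difference is that you obtain $AJ = JA$ by showing $A$ preserves the eigenspaces $V_\mathbb{C}^{1,0}$ and $V_\mathbb{C}^{0,1}$ directly, whereas the paper applies the basis formula in the last clause of Proposition~\ref{prop:complexstr} to the pulled-back basis $A^{-1}e_k, A^{-1}f_k$.
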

\begin{proof}
 Note that whenever \(A \in \mathrm{GL}(n, \mathbb{C})\), then
 \begin{equation}
 \label{eq:5.2.2}
 A^*\Omega = (\det\nolimits_\mathbb{C} A)\Omega.
 \end{equation}
 It is clear from~\eqref{eq:5.2.2} that every element of \(\mathrm{SL}(n, \mathbb{C})\) preserves \(\Omega\). Conversely, let \(A\) be an element of \(\mathrm{GL}(2n, \mathbb{R})\) that preserves \(\Omega\). The complex structure induced by \(A^*\Omega\) as in Proposition~\ref{prop:complexstr} must be the same as that induced by \(\Omega\). (That is, the standard \(J\).) Since \(A^*e^1, \dots, A^*e^n, A^*f^1, \dots, A^*f^n\) is the dual basis to \(A^{-1}e_1, \dots, A^{-1}e_n, A^{-1}f_1, \dots, A^{-1}f_n\), the last part of the statement of Proposition~\ref{prop:complexstr} implies that \(JA^{-1}e_k = A^{-1}f_k\) and \(JA^{-1}f_k = -A^{-1}e_k\) for every \(k\). In other words, \(AJA^{-1} = J\), so \(A\) is complex-linear. It follows from~\eqref{eq:5.2.2} that \(\det\nolimits_\mathbb{C} A = 1\). That is, \(A \in \mathrm{SL}(n, \mathbb{C})\).
\end{proof}

\begin{corollary} \label{cor:transstabilizer}
 The stabilizer of \(\Omega\) in \(\mathrm{GL}(2n, \mathbb{R})\) is closed under transposition.
\end{corollary}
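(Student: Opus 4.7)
The plan is to reduce the claim to a concrete linear-algebraic computation using the identification from Proposition~\ref{prop:omegastabilizer}. That proposition tells us the stabilizer is precisely the image of $\mathrm{SL}(n,\mathbb{C})$ in $\mathrm{GL}(2n,\mathbb{R})$ under the embedding $\iota \colon X + iY \mapsto \bigl(\begin{smallmatrix} X & -Y \\ Y & X \end{smallmatrix}\bigr)$. It therefore suffices to show that whenever a real matrix of the form $M = \bigl(\begin{smallmatrix} X & -Y \\ Y & X \end{smallmatrix}\bigr)$ lies in $\iota(\mathrm{SL}(n,\mathbb{C}))$, its transpose $M^T$ also does.

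First I would carry out the key computation: the real transpose of $M$ is
\[
 M^T = \begin{pmatrix} X^T & Y^T \\ -Y^T & X^T \end{pmatrix} = \iota(X^T - iY^T).
\]
Thus under the embedding $\iota$, real transposition corresponds to \emph{complex conjugate transposition} on the complex matrix $A = X + iY$, i.e. $M^T = \iota(A^*)$ where $A^*$ denotes the Hermitian conjugate.

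Next, I would verify that $\mathrm{SL}(n,\mathbb{C})$ is itself closed under Hermitian conjugation: if $A \in \mathrm{SL}(n,\mathbb{C})$, then
\[
 {\det}_{\mathbb{C}}(A^*) = {\det}_{\mathbb{C}}(\overline{A}^T) = \overline{{\det}_{\mathbb{C}}(A)} = \overline{1} = 1,
\]
so $A^* \in \mathrm{SL}(n,\mathbb{C})$. Combining this with the previous observation, $M^T = \iota(A^*) \in \iota(\mathrm{SL}(n,\mathbb{C}))$, which is exactly the stabilizer of $\Omega$ by Proposition~\ref{prop:omegastabilizer}. This completes the proof.

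I do not anticipate any serious obstacle here: the whole content of the corollary, once Proposition~\ref{prop:omegastabilizer} is available, is the pleasant matching between the real transpose on the block-matrix side and the Hermitian transpose on the complex side, together with the trivial fact that $\mathrm{SL}(n,\mathbb{C})$ is preserved under Hermitian transposition. The only place to be careful is the block computation of $M^T$ and correctly reading off the corresponding complex matrix under $\iota$.
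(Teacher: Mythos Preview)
Your proposal is correct and is essentially identical to the paper's own proof: both invoke Proposition~\ref{prop:omegastabilizer} to identify the stabilizer with $\mathrm{SL}(n,\mathbb{C})$, compute that the real transpose of the block matrix corresponds to the Hermitian conjugate on the complex side, and then observe that $\det_{\mathbb{C}}(A^*) = \overline{\det_{\mathbb{C}} A} = 1$.
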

\begin{proof}
 Let us temporarily denote by \(F\) the embedding in~\eqref{eq:5.2.1}. We need to prove that \(F(\mathrm{SL}(n, \mathbb{C}))\) is closed under transposition. Suppose that \(A = X + iY\) is in \(\mathrm{SL}(n, \mathbb{C})\). Then
 \[
 F(A)^T = \begin{pmatrix} X & -Y \\ Y & X \end{pmatrix}^T = \begin{pmatrix} X^T & Y^T \\ -Y^T & X^T \end{pmatrix} = F(A^*),
 \]
 where \(A^* = X^T - iY^T\) is the conjugate transpose of \(A\). Since \(\det\nolimits_\mathbb{C} A^* = \overline{\det\nolimits_\mathbb{C} A} = 1\), we see that \(F(A)^T\) is in \(F(\mathrm{SL}(n,\mathbb{C}))\).
\end{proof}

Combining Theorem~\ref{thm:omegacalib} and Corollary~\ref{cor:transstabilizer}, we get the following affine non-squeezing theorem for \(\Omega\).

\begin{theorem}[Special Lagrangian affine non-squeezing theorem] \label{thm:slagns}
Let \(\Psi \colon \mathbb{R}^{2n} \to \mathbb{R}^{2n}\) be an affine map of the form \(z \mapsto Az + b\), where \(A\) preserves \(\Omega\) and \(b \in \mathbb{R}^{2n}\). If \(\Psi(B(r)) \subseteq Z_L(R)\), where \(L\) is any Lagrangian subspace of \(\mathbb{R}^{2n}\) (and thus special Lagrangian for some phase), then \(r \leq R\).
\end{theorem}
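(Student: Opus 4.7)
The plan is to mimic the proof of Proposition~\ref{prop:calibns} almost verbatim, replacing the real calibration $\alpha$ by the complex-valued $n$-form $\Omega$. All the substantive work has already been done: the comass-one estimate (Theorem~\ref{thm:omegacalib}) characterizes Lagrangian subspaces as the $\Omega$-calibrated ones, and Corollary~\ref{cor:transstabilizer} supplies the transposition-closure of the stabilizer that drives the argument. The only conceptual thing to check is that the Gram--Schmidt upgrade from orthonormal to arbitrary inputs, which was used in the $\omega^k$ case, goes through when the form takes complex values; this is immediate because $|\Omega(u_1,\dots,u_n)|$ only depends on the wedge $u_1\wedge\cdots\wedge u_n$ and the orthonormal-to-arbitrary passage only replaces the $u_i$ by scalar multiples.

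Concretely, I would proceed as follows. Write $\psi(z) = Az + b$ with $A^*\Omega = \Omega$, and assume $\psi(B(r)) \subseteq Z_L(R)$ for a Lagrangian subspace $L \subset \mathbb{R}^{2n}$. Pick an orthonormal basis $e_1,\dots,e_n$ of $L$. By Corollary~\ref{cor:transstabilizer}, $A^T$ also preserves $\Omega$, hence
\[
 1 = |\Omega(e_1,\dots,e_n)| = |\Omega(A^T e_1,\dots,A^T e_n)|,
\]
where the first equality uses that $L$ is Lagrangian together with Theorem~\ref{thm:omegacalib}. Now apply the Gram--Schmidt argument: if $v_1,\dots,v_n$ is the orthogonalization of $A^Te_1,\dots,A^Te_n$, then $|v_j| \leq |A^Te_j|$ and the wedge products agree, so
\[
 1 = |\Omega(A^Te_1,\dots,A^Te_n)| = |v_1|\cdots|v_n|\,\bigl|\Omega\bigl(\tfrac{v_1}{|v_1|},\dots,\tfrac{v_n}{|v_n|}\bigr)\bigr| \leq |A^Te_1|\cdots|A^Te_n|,
\]
the last inequality being Theorem~\ref{thm:omegacalib} applied to an orthonormal frame. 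Hence some $|A^Te_k|$, say $|A^Te_1|$, is at least $1$.

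The geometric finish is then identical to the proofs of Theorems~\ref{thm:affns} and~\ref{thm:poweraffns}. Set $z = \pm r\,A^Te_1 / |A^Te_1| \in B(r)$ and note that since $e_1,\dots,e_n$ is an orthonormal basis of $L$, the projection onto $L$ satisfies $|\mathrm{proj}_L(\psi(z))|^2 = \sum_{j=1}^n \langle e_j, \psi(z)\rangle^2 \geq \langle e_1, Az + b\rangle^2 = \bigl[\pm r|A^Te_1| + \langle e_1, b\rangle\bigr]^2$. Choosing the sign of $z$ to match the sign of $\langle e_1, b\rangle$, the hypothesis $\psi(z)\in Z_L(R)$ gives $R^2 \geq r^2|A^Te_1|^2 \geq r^2$. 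There is no real obstacle here — the proof is essentially a bookkeeping exercise once Corollary~\ref{cor:transstabilizer} is in hand. The only place one might hesitate is in justifying the complex-valued version of the $|\Omega(v_1,\dots,v_n)| \leq |v_1|\cdots|v_n|$ inequality, but as noted above this follows from Theorem~\ref{thm:omegacalib} by the same Gram--Schmidt trick used for $\omega^k$.
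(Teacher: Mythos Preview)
Your proposal is correct and follows exactly the approach the paper takes: the paper states that the proof of Proposition~\ref{prop:calibns} goes through verbatim with $\Omega$ in place of the real calibration $\alpha$, invoking Theorem~\ref{thm:omegacalib} for the comass-one estimate and Corollary~\ref{cor:transstabilizer} for transposition closure. Your write-up makes the Gram--Schmidt step explicit (just as the paper does in the passage preceding Proposition~\ref{prop:calibns}), and the geometric finish is identical to those of Theorems~\ref{thm:affns} and~\ref{thm:poweraffns}.
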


Following the development in Section~\ref{section:2.2}, it is natural to seek special Lagrangian analogues of the affine rigidity theorem (Theorem~\ref{thm:affrigid}) and of Theorem~\ref{thm:linchar} (a linear map preserves the linear symplectic width of ellipsoids exactly when it is \(\pm\)-symplectic). We must restrict to the maps which preserve all of \(\Omega\), because that is the group of maps for which we were able to establish an affine non-squeezing theorem. 

To state an analogue of Theorem~\ref{thm:affrigid}, we need to first define the analogue of the linear non-squeezing property. We call a subset \(B\) of \(\mathbb{R}^{2n}\) a \emph{linear special Lagrangian ball of radius \(r\)} if there is a linear map \(A\) preserving \(\Omega\) for which \(A(B(r)) = B\). We call a subset \(Z\) of \(\mathbb{R}^{2n}\) a \emph{linear special Lagrangian cylinder} if there is a linear map \(A\) preserving \(\Omega\) for which \(A(Z_L(R)) = Z\) for some \(R > 0\), where \(L = \mathbb{R}^n \subset \mathbb{C}^n\). (Recall the discussion in Remark~\ref{rmk:cylinders} to avoid confusion with this nomenclature.) As in the symplectic case, a consequence of Theorem~\ref{thm:slagns} is that the radius \(R\) is an \(\mathrm{SL}(n, \mathbb{C})\)-invariant of a special Lagrangian cylinder \(Z\).

We say that a linear map \(\Psi \colon \mathbb{R}^{2n} \to \mathbb{R}^{2n}\) has the \emph{special Lagrangian linear non-squeezing property} if, for any \(B\) and \(Z\) as above, the condition \(\Psi(B) \subseteq Z\) implies that \(r \leq R\).

The conclusion of the affine rigidity theorem (Theorem~\ref{thm:affrigid}) in the symplectic case was that if a non-singular map $\Psi$ and its inverse both have the linear non-squeezing property, then $\Psi$ is symplectic or anti-symplectic. We are now working with a complex-valued form, so we replace the condition $\Psi^* \omega = \pm \omega$ with the condition $\Psi^* \Omega = e^{i \theta} \Omega$ for some real \(\theta\) depending on $A$. That is, that the form $\Omega$ is preserved up to a phase. (Note that if this condition on \(\Psi\) holds, then \((e^{-i\theta/n}\Psi)^*\Omega = \Omega\), so that \(e^{-i\theta/n}\Psi \in \mathrm{SL}(n, \mathbb{C})\) by Proposition~\ref{prop:omegastabilizer}. In particular, \(\Psi\) is still necessarily complex-linear.)

Recall that the symplectic affine non-squeezing theorem is also true for affine anti-symplectomorphisms, since the same proof went through under the assumption that \(\omega\) was preserved only up to a sign. In the exact same way, the proof of the ``affine special Lagrangian non-squeezing theorem'' goes through if \(\Omega\) is only preserved up to a phase. Therefore, the corresponding affine rigidity property should be the following.

\emph{Let \(\Psi\) be a non-singular linear map from \(\mathbb{R}^{2n}\) to itself such that both \(\Psi\) and \(\Psi^{-1}\) have the special Lagrangian linear non-squeezing property. Then \(\Psi\) preserves \(\Omega\) up to a phase.}

This is true under the assumption of \emph{complex}-linearity, as Theorem~\ref{thm:slagrid} below shows, but it is false if we merely assume real-linearity. Consider the complex conjugation map \(\sigma \colon \mathbb{C}^n \to \mathbb{C}^n\). If we assume that \(\sigma(B) \subseteq Z\) for some linear special Lagrangian ball \(B\) of radius \(r\) and some linear special Lagrangian cylinder \(Z\) of radius \(R\), then it follows that \((S \circ \sigma \circ T)(B(r)) \subseteq Z_L(R)\) for some \(S\) and \(T\) in \(\mathrm{SL}(n, \mathbb{C})\). Since the ball \(B(r)\) is invariant under \(\sigma\), and \(\sigma\) lies in the normalizer of \(\mathrm{SL}(n, \mathbb{C})\), it follows that \(S \circ (\sigma \circ T \circ \sigma)\) takes \(B(r)\) into \(Z_L(R)\). By Theorem~\ref{thm:slagns}, we have that \(r \leq R\) and so \(\sigma\) has the special Lagrangian linear non-squeezing property, despite not preserving \(\Omega\) up to any phase.

\begin{theorem} \label{thm:slagrid}
 Let \(\Psi\) be a non-singular \emph{complex}-linear map from \(\mathbb{C}^n\) to itself such that both \(\Psi\) and \(\Psi^{-1}\) have the special Lagrangian linear non-squeezing property. Then \(\Psi\) preserves \(\Omega\) up to a phase.
\end{theorem}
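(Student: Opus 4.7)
The plan is to exploit the complex-linearity of $\Psi$ to reduce everything to a computation with $\det\nolimits_{\mathbb{C}}\Psi$. Since $\Psi$ is complex-linear, the identity $\Psi^{*}\Omega = (\det\nolimits_{\mathbb{C}}\Psi)\,\Omega$ (already used in the proof of Proposition~\ref{prop:omegastabilizer}) shows that $\Psi$ preserves $\Omega$ up to a phase if and only if $|\det\nolimits_{\mathbb{C}}\Psi| = 1$. So setting $c := \det\nolimits_{\mathbb{C}}\Psi$, the goal is to derive $|c| = 1$ from the two non-squeezing hypotheses.

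I will argue by contradiction. Since $\det\nolimits_{\mathbb{C}}\Psi^{-1} = c^{-1}$ and both $\Psi$ and $\Psi^{-1}$ share the non-squeezing hypothesis, the cases $|c|<1$ and $|c|>1$ are symmetric under $\Psi \leftrightarrow \Psi^{-1}$, so it suffices to rule out $|c| < 1$.

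The key construction is as follows. Choose any complex $n$-th root $c^{1/n}$ and set $T := c^{1/n}\Psi^{-1}$. Then $T$ is complex-linear with $\det\nolimits_{\mathbb{C}}T = (c^{1/n})^{n}/\det\nolimits_{\mathbb{C}}\Psi = 1$, so $T \in \mathrm{SL}(n,\mathbb{C})$ and preserves $\Omega$ by Proposition~\ref{prop:omegastabilizer}. Therefore $T(B(1))$ is a special Lagrangian ball of radius $1$, and its image under $\Psi$ is
\[
\Psi(T(B(1))) = (c^{1/n}I)(B(1)) = B(|c|^{1/n}),
\]
a round ball, since multiplication by the complex scalar $c^{1/n}$ is a rotation followed by uniform scaling by $|c|^{1/n}$. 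The pointwise inequality $|\mathrm{Re}\,z| \leq |z|$ on $\mathbb{C}^{n}$ then gives $B(|c|^{1/n}) \subseteq Z_{\mathbb{R}^{n}}(|c|^{1/n})$, and $Z_{\mathbb{R}^{n}}(|c|^{1/n})$ is itself a special Lagrangian cylinder of radius $|c|^{1/n}$ (take the defining map to be $I$). Thus $\Psi$ maps a special Lagrangian ball of radius $1$ into a special Lagrangian cylinder of radius $|c|^{1/n} < 1$, contradicting the special Lagrangian linear non-squeezing property of $\Psi$.

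The argument is essentially mechanical once one observes that, for a complex-linear $\Psi$, the ``defect'' $c^{1/n}$ preventing $\Psi$ from lying in $\mathrm{SL}(n,\mathbb{C})$ can be pulled out as a pure complex scalar, and multiplication by a number of modulus strictly less than $1$ uniformly shrinks balls into thinner SL cylinders. The only real check is that $T$ lies in $\mathrm{SL}(n,\mathbb{C})$, which is the one-line determinant computation above. The complex-linearity of $\Psi$ is indispensable here, since it is exactly what yields the clean formula $\Psi^{*}\Omega = c\,\Omega$ and hence the factorisation of $\Psi$ as a complex scalar times an $\mathrm{SL}(n,\mathbb{C})$ element; without it, no such preferred scalar is available, which is consistent with the unrestricted affine rigidity statement remaining open.
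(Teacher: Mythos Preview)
Your proof is correct. Both your argument and the paper's reduce immediately to showing $|c|=1$ for $c=\det_{\mathbb C}\Psi$, assume $|c|<1$ after swapping $\Psi\leftrightarrow\Psi^{-1}$, and then exhibit a special Lagrangian ball of radius $1$ that $\Psi$ sends into a special Lagrangian cylinder of radius $|c|^{1/n}$. The difference is only in how that last step is carried out. The paper mimics the symplectic affine rigidity proof (Theorem~\ref{thm:affrigid}): it works with $\Psi^{T}$, builds $\Phi\in\mathrm{SL}(n,\mathbb C)$ sending $e_k$ to $\lambda^{-1}e^{-i\theta}\Psi^{T}e_k$, and then shows that $A^{T}=\Psi(\Phi^{T})^{-1}$ maps $B(1)$ into $Z_L(\lambda)$; this implicitly uses Corollary~\ref{cor:transstabilizer} (closure of $\mathrm{SL}(n,\mathbb C)$ under real transpose) to conclude that $(\Phi^{T})^{-1}(B(1))$ is a special Lagrangian ball. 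Your route is more direct: you simply factor $\Psi=(c^{1/n}I)\circ T^{-1}$ with $T=c^{1/n}\Psi^{-1}\in\mathrm{SL}(n,\mathbb C)$ and observe that $c^{1/n}I$ shrinks $B(1)$ to $B(|c|^{1/n})\subseteq Z_{\mathbb R^n}(|c|^{1/n})$. This avoids transposes and the appeal to Corollary~\ref{cor:transstabilizer} entirely, at the small cost of losing the visible parallel with the symplectic proof.
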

\begin{proof}
 As in the proof of Theorem~\ref{thm:affrigid}, assume that \(\Psi^*\Omega \neq e^{i\theta} \Omega\) for any real \(\theta\). Since \(\Psi\) is complex-linear, it follows from~\eqref{eq:5.2.2} that \((\det\nolimits_\mathbb{C} \Psi)\Omega \neq e^{i\theta}\Omega\) for any real \(\theta\). That is, \(\det\nolimits_\mathbb{C} \Psi \not\in S^1\). So, either \(\lvert\det\nolimits_\mathbb{C}\Psi\rvert < 1\) or \(\lvert\det\nolimits_\mathbb{C}\Psi\rvert > 1\). By replacing \(\Psi\) by \(\Psi^{-1}\) if necessary, we may assume that \(\lvert\det\nolimits_\mathbb{C}\Psi\rvert < 1\), so that by~\eqref{eq:5.2.2} again we have that 
 \[
 0 < |\Omega(\Psi^T e_1, \dots, \Psi^T e_n)| < |\Omega(e_1, \dots, e_n)| = 1.
 \]
 Let \(\lambda > 0\) satisfy \(\lambda^n = |\Omega(\Psi^T e_1, \dots, \Psi^T e_n)|\), and let \(\theta\) be such that
 \[
 \Omega(\lambda^{-1} e^{-i\theta} \Psi^T e_1, \dots, \lambda^{-1} e^{-i\theta} \Psi^T e_n) = 1.
 \]
 It follows that there exists a \(\Phi\) in \(\mathrm{SL}(n, \mathbb{C})\) such that \(\Phi e_k = \lambda^{-1} e^{-i\theta} \Psi^T e_k\) for each \(k\). (Define \(\Phi e_k\) as above and then let \(\Phi f_k = J \Phi e_k\).) Let \(A = \Phi^{-1} \Psi^T\), so that \(Ae_k = \lambda e^{i\theta} e_k\) for each \(k\). If \(z \in B(1)\), then
 \[
 \sum_{k=1}^n \langle e_k, A^T z \rangle^2 = \sum_{k=1}^n \langle \lambda e^{i\theta} e_k, z \rangle^2 \leq \lambda^2|z|^2 \leq \lambda^2,
 \]
 where the first inequality above follows from the fact that \(\{e^{i\theta_k}e_k = (\cos \theta)e_k - (\sin \theta) f_k\}_{k=1}^n\) is an orthonormal set. This says that \(A^T\) maps \(B(1)\) into \(Z_L(\lambda)\), where \(L = \mathrm{span}\{e_1, \dots, e_n\}\). But \(\lambda < 1\), so \(A^T = \Psi (\Phi^T)^{-1}\) does not have the special Lagrangian linear non-squeezing property, but then neither does \(\Psi\), which is a contradiction.
\end{proof}

Moving on, in analogy with the linear symplectic width and with the ``calibrated capacity'' defined in~\eqref{eq:4.2}, we define the \emph{linear special Lagrangian width} to be 
\[
 w(A) := \sup\{\mathcal{V}_n r^n : \Psi(B(r)) \subseteq A \text{ for some affine map \(\Psi\) preserving \(\Omega\)}\}.
\]
We want an analogue of Theorem~\ref{thm:linchar} for the linear special Lagrangian width; that is, we want to give a characterization of the ``special Lagrangian linear maps'' (those maps preserving \(\Omega\) up to a phase) as those maps preserving the linear special Lagrangian width. We first need an analogue of Proposition~\ref{prop:antisymp}.

\begin{proposition} \label{prop:slagphasecap}
 If \(\Psi\) is a linear map that preserves \(\Omega\) up to a phase, then \(w(\Psi(A)) = w(A)\) for every subset \(A\) of \(\mathbb{R}^{2n}\).
\end{proposition}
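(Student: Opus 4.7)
The plan is to mirror the structure of Proposition~\ref{prop:antisymp}. Write $\Psi^*\Omega = e^{i\theta}\Omega$. First, I would observe that it suffices to prove the one-sided inequality $w(A) \leq w(\Psi(A))$: since $\Psi^{-1}$ also preserves $\Omega$ up to a phase (namely $e^{-i\theta}$, as follows from $\Omega = (\Psi \Psi^{-1})^*\Omega = (\Psi^{-1})^* \Psi^* \Omega$), applying that inequality with $\Psi^{-1}$ and $\Psi(A)$ in place of $\Psi$ and $A$ gives $w(\Psi(A)) \leq w(\Psi^{-1}(\Psi(A))) = w(A)$.

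Thus the task reduces to the following: given an affine map $\Phi$ preserving $\Omega$ with $\Phi(B(r)) \subseteq A$, I would like to produce another affine map $\Phi'$ preserving $\Omega$ (not merely up to a phase) with $\Phi'(B(r)) \subseteq \Psi(A)$, which will force $\omega_n r^n \leq w(\Psi(A))$. The naive candidate $\Psi \circ \Phi$ only satisfies $(\Psi\Phi)^*\Omega = e^{i\theta}\Omega$, so one must insert an auxiliary factor to absorb the phase. In the anti-symplectic case this role is played by the map $T(x,y) = (-x,y)$, which preserves the unit ball and multiplies $\omega$ by $-1$. The analogue here is complex multiplication by the scalar $c = e^{-i\theta/n}$: since $|c|=1$ this is a unitary transformation, so $T(B(r)) = B(r)$ as a set, while $T^*\Omega = c^n\Omega = e^{-i\theta}\Omega$ because $\Omega$ is an $n$-form over $\mathbb{C}$.

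The verification is then a direct computation: with $\Phi' := \Psi \circ \Phi \circ T$, which is affine because each factor is, one has
\[
(\Phi')^*\Omega \;=\; T^*\Phi^*\Psi^*\Omega \;=\; T^*\Phi^*(e^{i\theta}\Omega) \;=\; e^{i\theta}\,T^*\Omega \;=\; e^{i\theta}e^{-i\theta}\Omega \;=\; \Omega,
\]
using $\Phi^*\Omega = \Omega$. Meanwhile $\Phi'(B(r)) = \Psi(\Phi(T(B(r)))) = \Psi(\Phi(B(r))) \subseteq \Psi(A)$, which completes the argument. I do not anticipate a serious obstacle: the only conceptual point is identifying the correct $n$-th-root-of-unity rescaling $c = e^{-i\theta/n}$ as the right substitute for the symplectic sign-flip, after which the proof follows the template of Proposition~\ref{prop:antisymp} verbatim.
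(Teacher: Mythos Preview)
Your proposal is correct and matches the paper's proof essentially line for line: both reduce to the one-sided inequality via $\Psi^{-1}$, then correct the phase of $\Psi\circ\Phi$ by precomposing with the unitary scalar multiplication $T$ by $e^{-i\theta/n}$, which fixes $B(r)$ and yields an $\Omega$-preserving affine map $\Psi\circ\Phi\circ T$ taking $B(r)$ into $\Psi(A)$.
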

\begin{proof}
 The proof is entirely analogous to the proof of Proposition~\ref{prop:antisymp}. It suffices to show that \(w(A) \leq w(\Psi(A))\) since the other inequality can be established by the same argument with \(\Psi^{-1}\) in place of \(\Psi\). For this, it suffices to show that if \(\Phi(B(r)) \subseteq A\) for some affine \(\Omega\)-preserving map \(\Phi\), then \(\mathcal{V}_n r^n \leq w(\Psi(A))\).

 By assumption, \(\Psi^*\Omega = e^{i\theta} \Omega\) for some real \(\theta\). If \(T\) is the complex-linear map given by multiplication by \(e^{-i\theta/n}\), then the map \(T\) takes \(B(r)\) to \(B(r)\). It follows that \(\Psi \circ \Phi \circ T\) is an \(\Omega\)-preserving affine map which takes \(B(r)\) into \(\Psi(A)\). So, by the definition of \(w\), it follows that \(\mathcal{V}_n r^n \leq w(\Psi(A))\).
\end{proof}

Now, since the affine rigidity theorem for the special Lagrangian case (Theorem~\ref{thm:slagrid}) only holds for complex-linear maps, the following analogue of Theorem~\ref{thm:linchar} must restrict to such maps as well.

\begin{theorem} \label{thm:slagchar}
 Let \(\Psi \colon \mathbb{C}^n \to \mathbb{C}^n\) be a \emph{complex}-linear map. The following are equivalent:
 \begin{enumerate}
 \item \(\Psi\) preserves the linear special Lagrangian width of every ellipsoid centered at the origin.

 \item \(\Psi\) preserves \(\Omega\) up to a phase.
 \end{enumerate}
\end{theorem}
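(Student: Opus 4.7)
The plan is to follow essentially verbatim the structure of Theorem~\ref{thm:linchar}, substituting the special Lagrangian versions of the key ingredients (the affine non-squeezing theorem for $\Omega$, the phase-invariance of the width, and the affine rigidity theorem for complex-linear maps) for their symplectic counterparts. A small prerequisite I will need is the normalization $w(B(r)) = w(Z_L(r)) = \omega_n r^n$ for every Lagrangian subspace $L$, which I will establish by the usual chain of inequalities
\[
\omega_n r^n \leq w(B(r)) \leq w(Z_L(r)) \leq \omega_n r^n,
\]
where the first inequality is immediate from the definition of $w$, the second is monotonicity applied to $B(r) \subseteq Z_L(r)$, and the third comes from Theorem~\ref{thm:slagns}.

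The direction $(2) \Rightarrow (1)$ is then immediate: a map preserving $\Omega$ up to a phase preserves $w$ of every subset by Proposition~\ref{prop:slagphasecap}, and so in particular of every ellipsoid centered at the origin. For $(1) \Rightarrow (2)$, I would first show $\Psi$ is invertible exactly as in the proof of Theorem~\ref{thm:linchar}: if $\Psi$ were singular, then $\Psi(B(1))$ would be contained in a proper linear subspace, forcing $w(\Psi(B(1))) = 0$ and contradicting the normalization $w(\Psi(B(1))) = w(B(1)) = \omega_n$.

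Next, I would verify that $\Psi$ has the special Lagrangian linear non-squeezing property. Given a linear special Lagrangian ball $B$ of radius $r$ and a linear special Lagrangian cylinder $Z$ of radius $R$ with $\Psi(B) \subseteq Z$, the set $B$ is an ellipsoid centered at the origin (being the linear image of $B(r)$ under an $\Omega$-preserving map), so by hypothesis $w(\Psi(B)) = w(B) = \omega_n r^n$; Proposition~\ref{prop:slagphasecap} together with the normalization gives $w(Z) = \omega_n R^n$, and monotonicity of $w$ then yields $r \leq R$. Since $\Psi$ is complex-linear and invertible, so is $\Psi^{-1}$, and the identity
\[
w(\Psi^{-1}(E)) = w(\Psi(\Psi^{-1}(E))) = w(E)
\]
(valid because $\Psi^{-1}(E)$ is again an ellipsoid centered at the origin) shows that $\Psi^{-1}$ also preserves the width of every such ellipsoid. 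The same argument then gives the special Lagrangian linear non-squeezing property for $\Psi^{-1}$ as well.

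With both $\Psi$ and $\Psi^{-1}$ shown to have the special Lagrangian linear non-squeezing property, and with the complex-linearity of $\Psi$ already in the hypothesis, Theorem~\ref{thm:slagrid} applies and yields the conclusion that $\Psi$ preserves $\Omega$ up to a phase. I do not anticipate any serious technical obstacle, since the argument is a clean reduction to already-established results; the only delicate point is the final appeal to Theorem~\ref{thm:slagrid}, which is the one step where the complex-linearity hypothesis is genuinely used and explains why the present theorem must also restrict to complex-linear $\Psi$.
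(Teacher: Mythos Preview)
Your proposal is correct and follows essentially the same approach as the paper's own proof, including the same chain of reductions to Proposition~\ref{prop:slagphasecap} and Theorem~\ref{thm:slagrid}. The only difference is that you spell out the normalization $w(B(r)) = w(Z_L(r)) = \omega_n r^n$ explicitly as a preliminary, whereas the paper takes this as already established from the general calibrated-width discussion in Section~\ref{section:4.1}.
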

\begin{proof}
 The proof proceeds along the exact same lines as the proof for Theorem~\ref{thm:linchar}. Proposition~\ref{prop:slagphasecap} shows that 2 implies 1, so let us prove that 1 implies 2. By the exact same argument as in the symplectic case (with the linear symplectic width replaced by the linear special Lagrangian width) we see that \(\Psi\) must be invertible. By Theorem~\ref{thm:slagrid}, we want to prove that both \(\Psi\) and \(\Psi^{-1}\) have the special Lagrangian non-squeezing property. Let \(B\) be a linear special Lagrangian ball of radius \(r\) and let \(Z\) be a linear special Lagrangian cylinder of radius \(R\) such that \(\Psi(B) \subseteq Z\). Since \(B\) is an ellipsoid centered at the origin, it follows from the hypothesis 1 that
 \[
 \mathcal{V}_n r^n = w(B) = w(\Psi(B)) \leq w(Z) = \mathcal{V}_n R^n.
 \]
 This implies that \(r \leq R\), proving that \(\Psi\) has the special Lagrangian non-squeezing property. Since \(\Psi^{-1}\) is linear, \(\Psi^{-1}(E)\) is an ellipsoid centered at the origin, and it follows that
 \[
 w(E) = w(\Psi(\Psi^{-1}(E))) = w(\Psi^{-1}(E)),
 \]
 and then it follows from the same argument as for \(\Psi\) that \(\Psi^{-1}\) also has the special Lagrangian non-squeezing property. So, by Theorem~\ref{thm:slagrid}, the map \(\Psi\) preserves \(\Omega\) up to a phase. (This is the only step in this proof where we use the complex linearity of \(\Psi\); we need it to be able to invoke Theorem~\ref{thm:slagrid}.)
\end{proof}

\section{Further questions}
\label{sec:conclusion}

There are several further questions which arise naturally. First, in the particular context of the special Lagrangian calibration, we have the following questions:

\begin{enumerate}
    \item We established an affine rigidity result (Theorem~\ref{thm:slagrid}) and a characterization of width-preserving maps (Theorem~\ref{thm:slagchar}) in the special Lagrangian case, for necessarily complex-linear maps. One could also try to develop a non-linear theory along the lines of Section~\ref{section:2.4} for such maps.
    
    \item We have left open the questions of whether the stabilizer of \(\mathrm{Re}(\Omega)\) (and more generally the stabilizer of \(\mathrm{Re}(e^{i\theta}\Omega)\) for real \(\theta\)) is closed under transposition for the cases \(n \geq 4\).
    
    \item The non-linear results in Section~\ref{section:2.4} relied on Gromov's non-squeezing theorem (Theorem~\ref{thm:gns}) being true; more specifically, the existence of a normalized symplectic capacity was equivalent to Gromov's theorem. It seems one would need a special Lagrangian analogue of the non-squeezing theorem to develop analogues of the results of Section~\ref{section:2.4}.
    
    \item To our knowledge, the proofs of Gromov's non-squeezing theorem that establish the existence of a normalized capacity all rely on the fact that symplectic manifolds admit Hamiltonian vector fields and therefore a theory of Hamiltonian dynamics. It is not clear what the analogue would be for the special Lagrangian case. On the other hand, Gromov's original proof of the non-squeezing theorem using pseudo-holomorphic curves may be suitable for adaptation to a proof of a non-squeezing theorem for the special Lagrangian case. The notion of a \emph{Smith immersion} generalizes pseudo-holomorphic curves to other calibrated geometries, and it has been shown that Smith immersions enjoy many analytic properties analogous to the classical analytical properties of pseudo-holomorphic curves. (See~\cite{CKM} and~\cite{antonspiro}.)
\end{enumerate}

Recall that in Section~\ref{sec:calibrations}, we demonstrated that an affine non-squeezing theorem is trivial for a calibration \(\alpha\) on \(\mathbb{R}^n\) whose stabilizer group is contained in \(\mathrm{SO}(n)\), because the affine maps preserving \(\alpha\) are isometries. Nevertheless, it may still be possible to develop a non-trivial theory of ``\(\alpha\)-capacities'' for such calibrations, and to ask other related questions, such as \(\alpha\)-analogues of Theorems~\ref{thm:nonlinchar} and~\ref{thm:c0closure}, as we discussed at the end of Section~\ref{sec:calibrations}.

More generally, the authors would like to call attention to the fact that, in analogy with our philosophy in the present paper, there are many interesting questions that can be considered for general calibrations, which are to date much better understood in the classical context of the K\"ahler calibration. Some examples are the following:

\begin{itemize}
    \item In~\cite{CKM-variational}, a variational characterization of calibrated submanifolds is established for associative and coassociative calibrations, generalizing a result for the classical K\"ahler calibration. Surprisingly, the Cayley case behaves quite differently in this context. The reasons for this are still mysterious.
    
    \item In~\cite{KM-extrinsic}, the notion of a \emph{compliant} calibration was defined. A calibration on \(\mathbb{R}^n\) is compliant if its stabilizer in \(\mathrm{SO}(n)\) satisfies a particular Lie-theoretic property. In particular, all of the geometrically interesting calibrations are compliant. It would be interesting to see if and how this condition interacts with the ideas considered in the present paper.
    
    \item A particularly rich and well-studied \emph{pluripotential theory} exists for the K\"ahler calibration. Work by Harvey--Lawson~\cite{HL-plu} has shown that much of this theory extends to other calibrations, but there is a great deal that remains to be better understood. See also~\cite{PR} for an application of these ideas to analogues of the \(\partial \bar{\partial}\)-lemma for other calibrated geometries.
    
    \item Other exciting recent work is~\cite{IP}, which considers analogues of Liouville's Theorem for calibrated geometries, and~\cite{BIM}, which considers analogues of the Schwarz Lemma and hyperbolicity for calibrated geometries. Both papers are also related to quasi-conformal geometry.
\end{itemize}

We hope that our work and the work described above will spur further interest in these ideas.



\bibliographystyle{plain}

\phantomsection 

\renewcommand*{\bibname}{References}

\addcontentsline{toc}{section}{References}

\bibliographystyle{plain}
\bibliography{non-squeezing}


\end{document}